\newcommand{\la}{\langle}
\newcommand{\ra}{\rangle}
\newcommand{\pphi}{|\phi\ra\la\phi|}
\newtheorem{thm}{Theorem}[section]
\newtheorem{lemma}{Lemma}[section]
\newtheorem{hyp}[lemma]{Hypothesis}
\newtheorem{prop}[lemma]{Proposition}
\numberwithin{equation}{section}
\title{Uniqueness of solutions to the 3D quintic Gross-Pitaevskii Hierarchy}
\author{ Younghun Hong }
\address{Department of Mathematics \newline\indent The University of Texas at Austin}
\email{yhong@math.utexas.edu}
\author{ Kenneth Taliaferro }
\address{Department of Mathematics \newline\indent The University of Texas at Austin}
\email{ktaliaferro@math.utexas.edu}
\author{ Zhihui Xie }
\address{Department of Mathematics, Statistics and Computer Science \newline\indent University of Illinois at Chicago}
\email{zxie@uic.edu}
\date{\today}
\begin{document}
  
\begin{abstract}

In this paper, we study solutions to the three-dimensional quintic Gross-Pitaevskii hierarchy. We prove unconditional uniqueness among all small solutions in the critical space $\mathfrak{H}^1$ (which corresponds to $H^1$ on the NLS level).  With slight modifications to the proof, we also prove unconditional uniqueness of solutions to the Hartree hierarchy without a smallness condition.  Our proof uses the quantum de Finetti theorem, and is an extension of the work by Chen-Hainzl-Pavlovi\'c-Seiringer \cite{CHPS}, and our previous work \cite{UniqueLowReg}.

\end{abstract}

\maketitle

\section{Introduction}   \label{sec: introduction}

\subsection{Statement of the main result}

In this paper, we establish uniqueness of small solutions to the three-dimensional quintic Gross-Pitaevskii (GP) hierarchy in the scaling-critical Sobolev type space.

The 3d quintic GP hierarchy is an infinite system of coupled linear equations 
\begin{equation}   \label{hierarchy eqn}
 i\partial_t \gamma^{(k)} = (-\Delta_{\underline{x}_k}+\Delta_{\underline{x}_k'}) \gamma^{(k)} + \lambda \sum_{j=1}^k B_{j;k+1,k+2}\gamma^{(k+2)},   \quad k\in\mathbb{N},
\end{equation}
where $\gamma^{(k)}=\gamma^{(k)}(t, \underline{x}_k;\underline{x}_k'): [0,T)\times\mathbb{R}^{3k}\times\mathbb{R}^{3k}\to\mathbb{C}$, the underlined variables $\underline{x}_k$ and $\underline{x}_k'$ denote $k$-tuples of spacial variables, i.e., $\underline{x}_k=(x_1,x_2,\cdots,x_k)\in\mathbb{R}^{3k}$ and $\underline{x}_k'=(x_1',x_2',\cdots,x_k')\in\mathbb{R}^{3k}$, and the Laplacians are given by $\Delta_{\underline{x}_k}:=\sum_{j=1}^k \Delta_{x_j}$ and $\Delta_{\underline{x}_k'}:=\sum_{j=1}^k \Delta_{x_j'}$. We assume that for each $k\in\mathbb{N}$, $\gamma^{(k)}$ is a symmetric marginal density matrix such that 
\begin{equation}  \label{hermitian}
  \gamma^{(k)}(t,\underline{x}_k;\underline{x}_k')=\overline{\gamma^{(k)}(t,\underline{x}_k';\underline{x}_k)}
\end{equation}
and 
\begin{equation}  \label{sym}
\gamma^{(k)}(t,x_{\sigma(1)},\cdots,x_{\sigma(k)};x_{\sigma'(1)}',\cdots,x_{\sigma'(k)}')=\gamma^{(k)}(t,\underline{x}_k; \underline{x}_k') 
\end{equation}
for any permutations $\sigma$ and $\sigma'$ on $\{1, 2, \cdots,k\}$. The \emph{contraction operator} $B_{j;k+1,k+2}$ is defined by
\begin{equation}   \label{Bj operator GP}
 \begin{split}
 & B_{j;k+1,k+2} \gamma^{(k+2)} (t,\underline{x}_k;\underline{x}_k') \\
 :&= \int dx_{k+1}dx_{k+2} dx_{k+1}'dx_{k+2}' [\delta(x_j-x_{k+1})\delta(x_j-x_{k+2})\delta(x_j-x_{k+1}')\delta(x_j-x_{k+2}')  \\
 &\quad\quad\quad\quad\quad\quad\quad - \delta(x_j'-x_{k+1})\delta(x_j'-x_{k+2})\delta(x_j'-x_{k+1}')\delta(x_j'-x_{k+2}') ]
 \gamma^{(k+2)}(t,\underline{x}_{k+2};\underline{x}_{k+2}')  \\
 & = \gamma^{(k+2)}(t,\underline{x}_{k},x_j,x_j;\underline{x}_{k}',x_j,x_j) - \gamma^{(k+2)}(t,\underline{x}_{k},x_j',x_j';\underline{x}_{k}',x_j',x_j').
\end{split}
\end{equation}  
The coupling constant is either $-1$ or $1$. We call the GP hierarchy \eqref{hierarchy eqn} \emph{defocusing} if $\lambda=1$, and \emph{focusing} if $\lambda=-1$.

To define solutions to the GP hierarchy, we introduce the following definitions (see also \cite{ESY06, ESY07, ESY09, ESY10, CHPS}). For $s\geq 0$, we define the homogeneous Sobolev space $\dot{\mathfrak{H}}^s$ for sequences by 
\begin{equation}  \label{dotfrakH}
\dot{\mathfrak{H}}^s:=\Big\{\{\gamma^{(k)}\}_{k\in\mathbb{N}}: \text{Tr }(|R^{(k,s)}\gamma^{(k)}|) < M^{2k} \text{ for some positive constant $M<\infty$}\Big\}
\end{equation}
where
$$R^{(k,s)} :=\prod_{j=1}^k (-\Delta_{x_j})^{\frac{s}{2}}(-\Delta_{x'_j})^{\frac{s}{2}}.$$
Similarly, we define the inhomogeneous Sobolev space $\mathfrak{H}^s$ for sequences by 
\begin{equation} \label{frakH}
 \mathfrak{H}^s:=\Big\{\{\gamma^{(k)}\}_{k\in\mathbb{N}}: \text{Tr }(|S^{(k,s)}\gamma^{(k)}|) < M^{2k} \text{ for some constant $M<\infty$}\Big\}
\end{equation}
where
$$S^{(k,s)} :=\prod_{j=1}^k (1-\Delta_{x_j})^{\frac{s}{2}}(1-\Delta_{x'_j})^{\frac{s}{2}}.$$
A sequence $\{\gamma^{(k)}(t)\}_{k\in\mathbb{N}}$ is called a \emph{mild solution} in $L_{t\in[0,T)}^\infty \dot{\mathfrak{H}}^s$ (or $L_{t\in[0,T)}^\infty \mathfrak{H}^s$) to the quintic GP hierarchy if it solves the hierarchy of the integral equations
\begin{equation}  \label{integral eqn}
\gamma^{(k)}(t)=U^{(k)}(t)\gamma^{(k)}(0)+i\lambda\sum_{j=1}^k \int_0^t U^{(k)}(t-s)B_{j;k+1,k+2}\gamma^{(k+2)}(s)ds,\quad \forall k\in\mathbb{N},
\end{equation}
where $U^{(k)}(t):=e^{it(\Delta_{\underline{x}_k}-\Delta_{\underline{x}'_k})}$ is the free evolution operator. A sequence $\{\gamma^{(k)}\}_{k\in\mathbb{N}}$ is called \emph{admissible} if for each $k\in\mathbb{N}$ and $t\in [0,T)$, $\mathcal{\gamma}^{(k)}$ is a non-negative trace class operator on $L_{sym}^2(\mathbb{R}^{3k}\times\mathbb{R}^{3k})$ (subset of $L^2$ functions that satisfy \eqref{sym}) and 
\begin{equation}  \label{admissibility}
 \gamma^{(k)}=\textup{Tr}_{k+1}(\gamma^{(k+1)})=\int_{\mathbb{R}^3}dx_{k+1}\gamma^{(k+1)}(\underline{x}_k, x_{k+1};\underline{x}_k', x_{k+1}).
\end{equation}
We call a sequence $\{\gamma^{(k)}\}_{k\in\mathbb{N}}$ a \emph{limiting hierarchy} if there is a sequence $\{\gamma_N^{(N)}\}_{N\in\mathbb{N}}$ of non-negative density matrices on $L_{sym}^2(\mathbb{R}^{3N}\times \mathbb{R}^{3N})$ with $\textup{Tr}(\gamma_N^{(N)})=1$ such that $\gamma^{(k)}$ is the weak-* limit of the $k$-particle marginals of $\gamma_N^{(N)}$ in the trace class on $L_{sym}^2(\mathbb{R}^{3k}\times\mathbb{R}^{3k})$, that is,
\begin{equation}
\begin{aligned}
\gamma_N^{(k)}:&=\textup{Tr}_{k+1,\cdot\cdot\cdot, N}(\gamma_N^{(N)})\\
&=\int_{\mathbb{R}^{3(N-k)}}dx_{k+1}\cdots dx_{N}\gamma_N^{(N)}(\underline{x}_k, x_{k+1},\cdots x_N;\underline{x}_k', x_{k+1},\cdots, x_N)\\
&\rightharpoonup^*\gamma^{(k)}\textup{ as }N\to\infty.
\end{aligned}
\end{equation}

In this paper, we consider mild solutions to the GP hierarchy \eqref{hierarchy eqn} that are admissible or limiting hierarchies. Such mild solutions are physically relevant in the theory of derivation of the nonlinear Schr\"odinger equation (NLS) from the many body linear Schr\"odinger equation (see Section \ref{subsec: related work}).

We now state our main result.

\begin{thm}[Uniqueness of small solutions to the quintic GP hierarchy]  \label{thm: main thm GP}
Suppose that $\{\gamma^{(k)}(t)\}_{k\in\mathbb{N}}$ is a mild solution in $L_{t\in[0,T)}^\infty\dot{\mathfrak{H}}^1$ to the quintic GP hierarchy \eqref{hierarchy eqn} with initial data $\{\gamma^{(k)}(0)\}_{k\in\mathbb{N}}$, which is either admissible or a limiting hierarchy for each $t$. If $\text{Tr }(|R^{(k,1)}\gamma^{(k)}|) < M^{2k}$ for all $t\in[0,T)$ for $M>0$ sufficiently small, then $\{\gamma^{(k)}(t)\}_{k\in\mathbb{N}}$ is the only such solution for the given initial data. 
\end{thm}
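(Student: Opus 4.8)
The plan is to run the Klainerman--Machedon ``boardgame'' iteration inside the quantum de Finetti framework of \cite{CHPS} and \cite{UniqueLowReg}, at the $\dot{\mathfrak H}^1$-critical scaling dictated by the quintic interaction in three dimensions; the smallness of $M$ enters precisely because at critical regularity the iteration gains no positive power of time. \emph{Step 1 (reduction and de Finetti representation).} By linearity of \eqref{hierarchy eqn} it suffices to prove that a mild solution $\{\gamma^{(k)}(t)\}_{k\in\mathbb N}$ in $L^\infty_{t\in[0,T)}\dot{\mathfrak H}^1$ with $\gamma^{(k)}(0)=0$ for all $k$ and $\textup{Tr}(|R^{(k,1)}\gamma^{(k)}(t)|)<M^{2k}$ for all $k$ and $t$ (the same small $M$) must vanish identically; applying this to the difference of two solutions with common initial data then gives the theorem. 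The difference of two solutions is not admissible, but it is the difference of two sequences each of which, at every fixed $t$, satisfies the hypotheses of the quantum de Finetti theorem --- the strong version (Hudson--Moody/St\o rmer) if the solution is admissible, the weak version (Lewin--Nam--Rougerie) if it is a limiting hierarchy --- and hence is representable as $\gamma_i^{(k)}(t)=\int\pphi^{\otimes k}\,d\mu_{i,t}(\phi)$ for a Borel probability measure $\mu_{i,t}$ on the unit ball of $L^2(\mathbb R^3)$. Since $\textup{Tr}(R^{(k,1)}\pphi^{\otimes k})=\|\phi\|_{\dot H^1}^{2k}$, the bound $\int\|\phi\|_{\dot H^1}^{2k}\,d\mu_{i,t}=\textup{Tr}(R^{(k,1)}\gamma_i^{(k)}(t))\le\textup{Tr}(|R^{(k,1)}\gamma_i^{(k)}(t)|)<M^{2k}$, valid for every $k$, forces $\supp\mu_{i,t}\subset\{\|\phi\|_{\dot H^1}\le M\}$ by Chebyshev and $k\to\infty$.

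\emph{Step 2 (Duhamel expansion and boardgame).} Because the data vanishes, iterating the integral equation \eqref{integral eqn} $n$ times gives the \emph{exact} identity (no remainder term)
\begin{multline*}
\gamma^{(k)}(t)=(i\lambda)^n\sum_{\underline j}\int_{0\le t_n\le\cdots\le t_1\le t}U^{(k)}(t-t_1)B_{j_1}U^{(k+2)}(t_1-t_2)B_{j_2}\\
\cdots\,U^{(k+2n-2)}(t_{n-1}-t_n)B_{j_n}\,\gamma^{(k+2n)}(t_n)\,dt_1\cdots dt_n,
\end{multline*}
with the sum over $j_\ell\in\{1,\dots,k+2(\ell-1)\}$. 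As in Klainerman--Machedon and \cite{CHPS}, I would group the $k(k+2)\cdots(k+2n-2)$ summands into at most $C_0^{\,n}$ combinatorial classes, each the integral of a single ordered composition over an explicit ``upper-echelon'' subdomain of the time simplex; the only new bookkeeping is to carry the boardgame through the two-level jumps $k\mapsto k+2$ of the quintic hierarchy.

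\emph{Step 3 (the critical multilinear estimate).} Inserting the de Finetti representation of $\gamma^{(k+2n)}(t_n)$, in each class the contractions $B_{j_\ell}$ collapse four of the $\phi$-factors of $\pphi^{\otimes(k+2n)}$ onto an already-present variable, so the level-$k$ kernel produced is an integral over the simplex of a product of factors of the form $e^{it\Delta}\phi$ and $e^{it\Delta}(-\Delta)^{1/2}\phi$ (the latter from the $R^{(k,1)}$-weights) evaluated at collapsed space-time points. Bounding the $R^{(k,1)}$-weighted trace norm by distributing the derivatives over the quintic factors and applying, for each factor, the three-dimensional $\dot H^1$-critical Strichartz estimate $\|e^{it\Delta}\psi\|_{L^{10}_{t,x}(\mathbb R\times\mathbb R^3)}\lesssim\|\psi\|_{\dot H^1}$, together with the retarded Strichartz bound for the Duhamel operator and H\"older in the intermediate times, should give, uniformly in $t\in[0,T)$,
\[
\Big\|R^{(k,1)}\big(\text{one boardgame class evaluated on }\pphi^{\otimes(k+2n)}\big)\Big\|_{\textup{Tr}}\ \lesssim\ C_1^{\,n}\,\|\phi\|_{\dot H^1}^{2(k+2n)}.
\]
The decisive feature, forced by criticality, is that this bound gains \emph{no} positive power of $T$ and \emph{no} spare derivative: all $2(k+2n)$ factors of $\phi$ are measured in $\dot H^1$, so the estimate is exactly scale-invariant in $\dot{\mathfrak H}^1$.

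\emph{Step 4 (conclusion).} Applying Step 3 to each of the two de Finetti pieces of $\gamma^{(k+2n)}(t_n)$, summing over the $\le C_0^{\,n}$ classes, and integrating in $d\mu_{1,t_n}$ and $d\mu_{2,t_n}$ over $\{\|\phi\|_{\dot H^1}\le M\}$ yields
\[
\textup{Tr}\big(|R^{(k,1)}\gamma^{(k)}(t)|\big)\ \le\ 2(C_0C_1)^{n}\,M^{2(k+2n)}\ =\ 2\big(C_0C_1M^{4}\big)^{n}M^{2k},\qquad t\in[0,T).
\]
For $M$ small enough that $C_0C_1M^4<1$, letting $n\to\infty$ gives $\textup{Tr}(|R^{(k,1)}\gamma^{(k)}(t)|)=0$ for every $k$ and $t$, hence $\gamma^{(k)}(t)\equiv0$ (since $R^{(k,1)}$ is injective); this is the asserted uniqueness, and nothing in the argument uses the sign of $\lambda$. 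The hard part will be Step 3: at $\dot{\mathfrak H}^1$-regularity there is no slack, so one must choose exactly the right Strichartz exponents and distribute the $R^{(k,1)}$-derivatives among the collapsed quintic factors so that the multilinear estimate closes with the precise critical scaling. A secondary difficulty is checking that the boardgame reorganization, the de Finetti concentration bound, and the measurability needed to interchange the $t_n$-integration with $\int d\mu_{i,t_n}$ all go through cleanly for the two-step structure of the quintic hierarchy.
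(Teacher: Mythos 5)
Your overall scaffolding --- reduction to zero data, Duhamel iteration, Klainerman--Machedon/Chen--Pavlovi\'c boardgame, quantum de Finetti factorization, estimation of individual boardgame classes, and the observation that at critical regularity one gains no power of $T$ so convergence is forced by the smallness $C_0C_1M^4<1$ rather than by $T^n$ --- matches the paper's proof exactly, and the de Finetti support observation in Step 1 is a clean formulation of what the paper uses implicitly. The fatal problem is in Step 3.

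You propose to bound the $R^{(k,1)}$-weighted trace norm, $\mathrm{Tr}\,|R^{(k,1)}(\text{boardgame class})|$, by $C_1^n\|\phi\|_{\dot H^1}^{2(k+2n)}$. But the last contraction $B_{j_n;k+2n-1,k+2n}$ produces, at the bottom of the distinguished tree, a factor $\tilde\psi=|\phi|^4\phi$ sitting on one side of a rank-one kernel (see \eqref{theta_mj}), and this factor is carried up the tree as one of the six slots in \eqref{form1}--\eqref{form2}. Estimating the $R^{(k,1)}$-trace of $J^1_j$ forces you to measure $\tilde\psi$ (or the chain of nested nonlinear products built on top of it) in $\dot H^1$; but $\||\phi|^4\phi\|_{\dot H^1}$ is \emph{not} controlled by $\|\phi\|_{\dot H^1}^5$ --- for generic $\phi\in\dot H^1(\mathbb R^3)$, the quantity $|\phi|^4\phi$ does not lie in $\dot H^1$ at all. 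Consequently the iteration you sketch cannot close, no matter how cleverly the $L^{10}_{t,x}$ Strichartz exponents are chosen: the obstruction is at the leaf, not in the propagation step. The paper's resolution is structurally different: it estimates $\mathrm{Tr}\,|R^{(k,-1)}\gamma^{(k)}|$ rather than the $R^{(k,1)}$-trace (Proposition \ref{prop: zero trace norm}), propagates exactly one $\dot H^{-1}$ norm down the distinguished branch while all other slots stay in $\dot H^1$ (Lemma \ref{induction}, the two coupled bounds \eqref{H^{-1} induction} and \eqref{H^s induction}), and closes the recursion at the leaf via $\||\phi|^4\phi\|_{\dot H^{-1}}\lesssim\|\phi\|_{L^6}^5\lesssim\|\phi\|_{\dot H^1}^5$ (Lemma \ref{final bound}). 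The multilinear ingredients (Proposition \ref{prop:multilinear gp}) are accordingly \emph{two} quintic estimates, one with $\dot H^{-1}$ input/output in a single slot, proved via a negative-Sobolev product estimate (Lemma \ref{split}) together with mixed Strichartz--Sobolev bounds, not a single $L^{10}_{t,x}$ estimate. The vanishing $\mathrm{Tr}\,|R^{(k,-1)}\gamma^{(k)}(t)|=0$ then gives $\gamma^{(k)}(t)=0$ by injectivity of $R^{(k,-1)}$.

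To repair your argument you would need to replace Step 3 with the paper's two-norm bookkeeping: carry along the tree a marker for which child is distinguished, prove two multilinear estimates (one $\dot H^{-1}\times(\dot H^1)^4\to\dot H^{-1}$ and one $(\dot H^1)^5\to\dot H^1$), and run a coupled induction that terminates on $\||\phi|^4\phi\|_{\dot H^{-1}}\lesssim\|\phi\|_{\dot H^1}^5$. Only then does the final sum take the form $C^nM^{4n+2k}$ and vanish as $n\to\infty$ for small $M$.
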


The quintic GP hierarchy is closely related to the quintic NLS via factorized functions. Indeed, one can check that if $\phi_t$ is a solution to the quintic NLS
\begin{equation}   \label{quintic nls}
 i\partial_t \phi_t = (-\Delta)\phi_t +\lambda |\phi_t|^4\phi_t,
\end{equation}
then a sequence of factorized functions,
\begin{equation}    \label{fac sol}
 \gamma^{(k)}(t,\underline{x}_k;\underline{x}_k') = (\Ket{\phi_t} \Bra{\phi_t})^{\otimes{k}} :=\prod_{j=1}^k \phi_t(x_j)\overline{\phi_t(x_j')},
\end{equation}
solves the GP hierarchy \eqref{hierarchy eqn}. In this sense, proving uniqueness for the GP hierarchy is more difficult than it is for the quintic NLS.

The quintic GP hierarchy was studied by T. Chen and Pavlovi\'c \cite{CPquintic} for the derivation of the quintic NLS as the Gross-Pitaevskii field limit of a non-relativistic Bose gas with $3$-particle interactions. As a part of their analysis, the authors proved (conditional) uniqueness of solutions to the quintic GP hierarchy in an energy space, that is, a Sobolev type space of order 1, in one and two dimensions. We remark that  in all dimensions, proving such uniqueness in an energy space  is necessary to derive NLS. However, it is an open problem to prove uniqueness in three dimensions.

Theorem \ref{thm: main thm GP} provides an answer for this open problem under a smallness assumption. We remark that the 3d quintic GP hierarchy is scaling-critical in $\dot{\mathfrak{H}}^1$, and that even with our smallness assumption, our theorem is the first uniqueness theorem for the cubic or quintic GP hierarchy in a scaling-critical space. Moreover, uniqueness in Theorem \ref{thm: main thm GP} is unconditional.

It remains an open problem to remove the smallness assumption.  In the case of the 3d quintic NLS, it is known that solutions are unique in the space $H^s$ for $s\geq 1$, without a smallness assumption \cite{kato95, HanFang, Cazenave, I-team}.  However, the proof of unconditional uniqueness in the scaling-critical case $s=1$ differs from the proof in the subcritical case $s>1$.  In the case of the 3d quintic GP hierarchy, we also expect that an approach different from the one that we use in the scaling-subcritical case is needed to remove the smallness assumption in the scaling-critical case.  Currently, the main obstacle to removing the smallness assumption for solutions to the 3d quintic GP hierarchy in the scaling-critical case is the generally infinite cardinality of the support of the measure $\mu$ in the statement of the quantum de Finetti theorem, Theorem \ref{thm: strong de Finetti}.

To compare scaling-critical and subcritical regimes, we provide a uniqueness theorem for the 3d quintic Hartree hierarchy. The 3d quintic Hartree hierarchy is also an infinite hierarchy as \eqref{hierarchy eqn}.  However the contraction operator $B_{j,k+1, k+2}$ in \eqref{Bj operator GP} is replaced by
\begin{align}  
 & B_{j;k+1,k+2} \gamma^{(k+2)} (t,\underline{x}_k;\underline{x}_k') \nonumber\\
 &:= \int dx_{k+1}dx_{k+2} dx_{k+1}'dx_{k+2}'\nonumber\\
 &\hspace{1cm} V(x_j-x_{k+1}, x_j-x_{k+2}) V(x_j-x_{k+1}', x_j-x_{k+2}')\gamma^{(k+2)}(t,\underline{x}_{k+2};\underline{x}_{k+2}') \label{Bj operator hartree}\\
 &\hspace{1cm}- \int dx_{k+1}dx_{k+2} dx_{k+1}'dx_{k+2}'\nonumber\\
 &\hspace{2cm} V(x_j'-x_{k+1}, x_j'-x_{k+2}) V(x_j'-x_{k+1}', x_j'-x_{k+2}')\gamma^{(k+2)}(t,\underline{x}_{k+2};\underline{x}_{k+2}'). \nonumber
\end{align}

Note that the 3d quintic Hartree equation is subcritical in $L_{t\in[0,T)}^\infty\mathfrak{H}^1$ if the three-particle interaction potential $V$ is less singular than the product of delta functions.  This is, if $V(\cdot,\cdot)\in L_{x,y}^r(\mathbb{R}^3\times\mathbb{R}^3)$ for some $r>1$. In this case, we can show unconditional uniqueness for the 3d quintic Hartree hierarchy without a smallness assumption.
\begin{thm}[Unconditional uniqueness for the quintic Hartree hierarchy]  \label{thm: main thm hartree}
Suppose that $V(\cdot,\cdot)\in L_{x,y}^r(\mathbb{R}^3\times\mathbb{R}^3)$ for some $r>1$. Let $\{\gamma^{(k)}(t)\}_{k\in\mathbb{N}} \in \dot{\mathfrak{H}}^1$ be a mild solution to the quintic Hartree hierarchy \eqref{integral eqn} with initial data $\{\gamma^{(k)}(0)\}_{k\in\mathbb{N}}$, which is either admissible or a limiting hierarchy for each $t$. If there exists $M>0$ such that $\text{Tr }(|R^{(k,1)}\gamma^{(k)}|) < M^{2k}$ for all $t\in[0,T)$, then  $\{\gamma^{(k)}(t)\}_{k\in\mathbb{N}}$ is the only such solution for the given initial data. 
\end{thm}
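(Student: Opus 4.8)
The plan is to carry over the de~Finetti--based Duhamel-expansion argument used to prove Theorem~\ref{thm: main thm GP}, replacing the scaling-critical (delta-function) collision estimate by its subcritical Hartree analogue, which holds for $V\in L^r$, $r>1$, with no smallness assumption. Let $\{\gamma_1^{(k)}(t)\}_{k\in\mathbb{N}}$ and $\{\gamma_2^{(k)}(t)\}_{k\in\mathbb{N}}$ be two mild solutions of \eqref{integral eqn}, with $B_{j;k+1,k+2}$ as in \eqref{Bj operator hartree}, sharing the same initial data, each admissible or a limiting hierarchy for every $t$, and each obeying $\mathrm{Tr}(|R^{(k,1)}\gamma_i^{(k)}(t)|)<M^{2k}$ for all $k$ and all $t\in[0,T)$; enlarging $M$, we may use the same constant for both, so that $\mathrm{Tr}(|R^{(k,1)}\delta^{(k)}(t)|)<2M^{2k}$ for $\delta^{(k)}(t):=\gamma_1^{(k)}(t)-\gamma_2^{(k)}(t)$. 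Iterating \eqref{integral eqn} $n$ times from level $k$, every fully expanded term depends only on the common initial data $\{\gamma^{(j)}(0)\}$ and cancels in the difference; what survives is the order-$n$ remainder, i.e.\ the $n$-fold time-ordered integral over the simplex $\{0\le t_n\le\cdots\le t_1\le t\}$ of
\[
(i\lambda)^n\,U^{(k)}(t-t_1)\,B_{j_1;k+1,k+2}\,U^{(k+2)}(t_1-t_2)\cdots B_{j_n;k+2n-1,k+2n}\,\delta^{(k+2n)}(t_n),
\]
summed over all $(j_1,\dots,j_n)$ with $1\le j_m\le k+2m-2$; there are $\prod_{m=0}^{n-1}(k+2m)$ such terms.

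To estimate this remainder one cannot invoke the a priori bound directly, since a mild solution is only controlled in $L_t^\infty\dot{\mathfrak{H}}^1$ while the collision estimates live in spacetime (Strichartz-type) norms; this gap is bridged by the quantum de~Finetti theorem applied at the top level. For each fixed $t_n$, both $\gamma_i^{(k+2n)}(t_n)$ are admissible (or limiting hierarchies), so by Theorem~\ref{thm: strong de Finetti} there are Borel probability measures $\mu_{i,t_n}$ on the unit ball of $L^2(\mathbb{R}^3)$ with
\[
\gamma_i^{(k+2n)}(t_n)=\int(\pphi)^{\otimes(k+2n)}\,d\mu_{i,t_n}(\phi).
\]
Since $R^{(k+2n,1)}$ sends the positive rank-one operator $(\pphi)^{\otimes(k+2n)}$ to the positive rank-one operator $\bigl(|(-\Delta)^{1/2}\phi\ra\la(-\Delta)^{1/2}\phi|\bigr)^{\otimes(k+2n)}$, whose trace equals $\|\phi\|_{\dot H^1}^{2(k+2n)}$, the hypothesis translates into $\int\|\phi\|_{\dot H^1}^{2(k+2n)}\,d\mu_{i,t_n}(\phi)<M^{2(k+2n)}$. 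Substituting these representations into $\delta^{(k+2n)}(t_n)$, and using that each $U^{(\cdot)}$ acts as a unitary conjugation which commutes with the corresponding $R^{(\cdot,1)}$ and preserves tensor-product structure, the contribution of $\gamma_i$ to the remainder becomes a $\mu_{i,t_n}$-average of the nested Hartree collision operators evaluated on the factorized state $(\pphi)^{\otimes(k+2n)}$, which one then controls by the multilinear estimate.

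That multilinear estimate is the technical crux, and is where the hypothesis $V\in L^r$, $r>1$, enters. Acting on a factorized state, $B_{j;k+1,k+2}$ produces the difference of two operators obtained from $(|\psi\ra\la\psi|)^{\otimes(k+2)}$ by multiplying the $j$-th ket, respectively bra, factor by the real potential $|W_\psi(\cdot)|^2$, where $W_\psi(x):=\int V(x-y_1,x-y_2)\psi(y_1)\psi(y_2)\,dy_1\,dy_2$, the remaining tensor factors being unchanged; the interleaved free evolutions keep the object in tensor-product form with independently evolving ket/bra profiles, and since $e^{it\Delta}$ is an isometry of $\dot H^1$, the $R^{(k,1)}$-weighted trace norm factors into a product of quantities of the type $\||W_\psi|^2\psi\|_{\dot H^1}$. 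Bounding such a quantity by $\|\psi\|_{\dot H^1}^{5}$ — with the single derivative from $R^{(k,1)}$ integrated by parts, at each collision, off the potential kernel and onto the extra variables that $R^{(\cdot,1)}$ already differentiates — reduces to controlling $W_\psi$ in suitable Lebesgue-based norms, which follows from Young's and H\"older's inequalities, $V\in L^r$ with $r>1$, and the Sobolev embedding $\dot H^1(\mathbb{R}^3)\hookrightarrow L^6$. This is the subcritical counterpart of the scaling-critical collision estimate in the proof of Theorem~\ref{thm: main thm GP}: it closes with an absolute constant $C(V)$ depending only on $\|V\|_{L^r}$ — whereas the delta-function estimate is scaling-critical and closes only under the smallness hypothesis — and this is precisely why no smallness is needed here. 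Keeping $C(V)$ uniform in $k$ and $n$, and checking that the $R^{(k,1)}$-derivatives are absorbed at every collision rather than accumulating along the tree, is the part of the argument I expect to be most delicate.

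Assembling the estimates, the number of terms $\prod_{m=0}^{n-1}(k+2m)=2^n\,\Gamma(n+\tfrac k2)/\Gamma(\tfrac k2)$, the simplex volume $t^n/n!$, the per-collision constant $C(V)^n$, and $\int\|\phi\|_{\dot H^1}^{2(k+2n)}\,d\mu_{i,t_n}<M^{2(k+2n)}$ combine to give
\[
\mathrm{Tr}\bigl(|R^{(k,1)}\delta^{(k)}(t)|\bigr)\ \le\ 2\Bigl(\prod_{m=0}^{n-1}(k+2m)\Bigr)\frac{t^n}{n!}\,C(V)^n\,M^{2(k+2n)}\ \le\ C_k\,n^{\frac{k-2}{2}}\,\bigl(2\,C(V)\,t\,M^4\bigr)^n\,M^{2k},
\]
using $\Gamma(n+\tfrac k2)/\Gamma(n+1)=O\!\bigl(n^{(k-2)/2}\bigr)$; thus the factorial produced by the branching of the hierarchy is exactly cancelled by the time-simplex factor, and no Klainerman--Machedon board-game reduction is required. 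Consequently, for $t<\tau:=\bigl(2\,C(V)\,M^4\bigr)^{-1}$ the right-hand side tends to $0$ as $n\to\infty$, which forces $\delta^{(k)}(t)=0$ on $[0,\tau]$ for every $k$. Since $\tau$ depends only on $\|V\|_{L^r}$ and $M$, and not on $k$ or on the base point, and since the a priori bound is available on all of $[0,T)$, applying the same argument successively on $[\tau,2\tau]$, $[2\tau,3\tau]$, \dots (with the already-established matching data at each left endpoint) yields $\delta^{(k)}\equiv 0$ on $[0,T)$, which is the assertion of the theorem.
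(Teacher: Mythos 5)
Your high-level plan (Duhamel expansion, quantum de~Finetti factorization, multilinear collision estimate, recursion down the tree) does track the paper's strategy, but the technical heart of your argument rests on a pointwise-in-time multilinear estimate that is false in three dimensions, and this in turn invalidates the claim that the simplex volume $t^n/n!$ lets you dispense with the Klainerman--Machedon board game.

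Concretely, you propose to close the recursion with a fixed-time bound of the form $\||W_\psi|^2\psi\|_{\dot H^1}\lesssim\|\psi\|_{\dot H^1}^5$, where $W_\psi=A[V,\psi,\psi]$. In $\mathbb{R}^3$ this fails: $\dot H^1$ does not embed into $L^\infty$, and since the hypothesis only gives $V\in L^r$ for \emph{some} $r>1$ (in particular possibly $r$ arbitrarily close to $1$), Beckner/Young gives only $W_\psi\in L^q$ with $q$ barely above $3$ from $\psi\in L^6$. Then $|W_\psi|^2\nabla\psi$ lands by H\"older in $L^{6/7}$ rather than $L^2$, so the $\dot H^1$ product rule cannot close; the same obstruction appears in the pure GP case, where $\||\phi|^4\phi\|_{\dot H^1}\lesssim\|\phi\|_{\dot H^1}^5$ is likewise false (the quintic nonlinearity is scaling-critical in $\dot H^1(\mathbb{R}^3)$, so its natural target is $\dot H^{-1}$, not $\dot H^1$). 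Your remark about "integrating the single derivative by parts off the potential kernel" does not repair this: there is no spare derivative to move, and the derivative loss accumulates from one collision to the next. The paper circumvents this in two ways you have not incorporated. First, it closes the recursion in the \emph{negative} Sobolev norm $R^{(k,-1)}$ (see Proposition \ref{prop: zero trace norm}, Propositions \ref{H^{-1} integral}--\ref{H^1 integral}, Lemma \ref{final bound}), so that one input per collision may be taken in $\dot H^{-1}$ and the output is measured in $\dot H^{-1}$; only at the very end is the a~priori $R^{(k,1)}$ bound used to upgrade the conclusion. Second, and more importantly, its per-collision estimates (Propositions \ref{prop:multilinear gp} and \ref{prop:multilinear hartree}) are Strichartz-based $L^1_t$-in-time bounds, gaining a factor $T^{3\epsilon}$ per collision through time-averaging. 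Because these estimates are $L^1_t$ over the whole time slab rather than pointwise in $t$, the $t^n/n!$ simplex volume is genuinely lost when you iterate them, and the $\prod_{m=0}^{n-1}(k+2m)\sim 2^n n!$ growth in the number of Duhamel terms can no longer be cancelled by the simplex. This is precisely why the paper invokes the Chen--Pavlovi\'c quintic version of the Klainerman--Machedon board game (Section \ref{subsec: comb reduction}), which reduces the term count from factorial to $\mathcal O(2^{k+3n})$; without that reduction the series does not converge. In short: your per-collision bound fails for the full range $r>1$; the correct Strichartz-type bound is not pointwise; and therefore the combinatorial reduction you claim to avoid is indispensable.
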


\subsection{Related works}   \label{subsec: related work}

The background work in this line goes back to the derivation of Schr\"odinger type equations from interacting particle systems. In the pioneering works by Hepp \cite{Hepp}, Spohn \cite{Spohn} and in a series of more recent breakthroughs by Erd\"os, Schlein and Yau \cite{ESY06, ESY07, ESY09, ESY10}, the authors derived the cubic NLS in $\mathbb{R}^3$. A major ingredient in this derivation is the establishment the uniqueness of solutions to the corresponding GP hierarchy. The proof of uniqueness by Erd\"os-Schlein-Yau requires sophisticated Feynman graph expansions. Later, Klainerman and Machedon \cite{KM} rephrased this as a board game argument to provide an alternative approach to prove uniqueness of solutions. However, the result in \cite{KM} is conditional in that the solutions that satisfy an a-priori space-time bound assumption. This assumption is used by Kirkpatrick, Schlein, and Staffilani \cite{KSS} in two dimensional settings for compact and non-compact domains. \\

A recent new proof on the unconditional uniqueness of 3d cubic GP hierarchy was initiated by T.Chen, Hainzl, Pavlovi\'c and Seringer \cite{CHPS} using the \emph{quantum de Finetti theorem}. The quantum de Finetti theorem is a quantum analogue of the Hewitt-Savage theorem in probability theory. The strong version of the quantum de Finetti theorem (see \ref{thm: strong de Finetti}) asserts that an infinite sequence of \emph{admissible} marginal density matrices can be expressed as an average over factorized states. However, for each $t$, the limiting hierarchies of density matrices do not necessarily satisfy admissibility. In this case, one uses the weak version of the de Finetti theorem (see \ref{thm: weak de Finetti}).  This is necessary when working with the BBGKY hierarchy approach for the derivation of NLS as in \cite{ESY06,ESY07,ESY09,ESY10}, where one starts with a finite BBGKY hierarchy of $N$ equations for the bosonic $N$-particle system (see (2.1) in \cite{ESY07}). In this case, the GP hierarchy of 
equations is obtained by taking $N\to \infty$ in the finite hierarchy. As part of the derivation, one proves that the weak-$*$ limit of solutions $\gamma_N^{(k)}$ 
to the BBGKY hierarchy solve the infinite GP hierarchy. 

By taking advantage of the quantum de Finetti theorems that give an alternative factorized formula for the solutions to the hierarchy, the authors of \cite{UniqueLowReg} established unconditional uniqueness for cubic GP hierarchy at the same regularity level of the corresponding NLS. Others have also used the de Finetti theorem to prove unconditional uniqueness for GP hierarchies in various settings. In \cite{verdanCubicT3}, V. Sohinger adapted the method from \cite{CHPS} to cubic GP hierarchy in a periodic setting. In \cite{ChenSmith}, X.Chen-Smith studied a Chen-Simon-Schr\"odigner hierarchy.

\subsection{Strategy of the proof}  \label{subsec: strategy}

We prove Theorem \ref{thm: main thm GP} and Theorem \ref{thm: main thm hartree} in the framework of Chen-Hainzl-Pavlovi\'c-Seringer \cite{CHPS}. Due to the linearity of the hierarchy, it suffices to show that solutions solution having a zero initial are the zero solution. In our proof, we iterate the Duhamel formula \eqref{integral eqn} with zero initial data $n$ times, resulting in a number of terms that grows factorially in $n$. We reduce the number of terms by the Erd\"os-Schlein-Yau combinatorial argument in Klainerman-Machedon's formulation \cite{KM}. The quintic version of this combinatoric reduction was used by Chen-Pavlovic in \cite{CPquintic}. We use it for the 3d quintic GP and Hartee hierarchies without modification. Next, we apply the quantum de Finetti theorem to write each term as an integral sum of factorized states, and reorganize them using a tree-graph structure (see Figure 1 below) which extends the tree-graph in Chen-Hainzl-Pavlovi\'c-Seiringer \cite{CHPS}. Then, we iteratively estimate 
the $n$ integrals.  In each step, we apply our multilinear estimates, which can be found in Appendix \ref{sec: multilin estimates}.  Finally, we send $n\to\infty$ and find that solutions having zero initial data must be the zero solution.

In our previous work \cite{UniqueLowReg}, we proved unconditional uniqueness for the cubic GP hierarchy in a low regularity setting, using a similar approach. In \cite{UniqueLowReg}, our key ingredients were the trilinear estimates $(2.19)$, $(2.21)$ and $(2.23)$ in Lemma 2.6.  These estimates are based on the dispersive estimates
\begin{equation}\label{dispersive estimate}
\|e^{it\Delta}f\|_{L^{p}(\mathbb{R}^d)}\lesssim|t|^{-d(\frac{1}{2}-\frac{1}{p})}\|f\|_{L^{p'}(\mathbb{R}^d)},\quad p\geq 2,
\end{equation}
and negative order Sobolev norm estimates (Lemma A.3 in \cite{UniqueLowReg}). In the proof, we applied these estimates to the reorganized integrals iteratively together with multilinear estimates based on Strichartz estimates ($(2.20)$, $(2.22)$ and $(2.24)$ in Lemma 2.6). We remark that the use of dispersive estimates is crucial in obtaining the optimal subcritical low regularity uniqueness theorem.  The dispersive estimates don't work in the scaling-critical space, however. Roughly speaking, this is due to the failure of integrability (in time) of the bound in \eqref{dispersive estimate}. For instance, if one tries to prove uniqueness for the 3d quintic GP hierarchy in $L_{t\in[0,T)}^\infty \mathfrak{H}^1$ by the same approach, one should choose $p=6$ for the multilinear estimate. Then, the bound in \eqref{dispersive estimate} is not integrable in time.

In the present work, instead of using dispersive estimates, we use multilinear estimates (Proposition \ref{prop:multilinear gp} and Propositions \ref{prop:multilinear hartree}) that are based on by Strichartz estimates and a negative order Sobolev norm bound. In the case of the Hartree hierarchy, we also make use of a convolution estimates of W. Beckner \cite{beckner}.

\subsection{Notation}
In order to prove Theorem \ref{thm: main thm GP} and Theorem \ref{thm: main thm hartree} at the same time, we define
\begin{align}
V_\infty(y,z):=
\begin{cases}
V(y,z), &\text{ for the Hatree hierarchy}. \\ 
\lambda\,\delta(y)\delta(z), &\text{ for the GP hierarchy}. 
\end{cases} \label{potential cases}
\end{align}
With this notation, we can now combine definitions \eqref{Bj operator GP} and \eqref{Bj operator hartree} of $B_{j;k+1,k+2}$ for the GP hierarchy and the Hartree hierarchy, respectively, as follows.
\begin{align}  
 & B_{j;k+1,k+2} \gamma^{(k+2)} (t,\underline{x}_k;\underline{x}_k') \nonumber\\
 &:= \int dx_{k+1}dx_{k+2} dx_{k+1}'dx_{k+2}'\nonumber\\
 &\hspace{1cm} V_\infty(x_j-x_{k+1}, x_j-x_{k+2}) V_\infty(x_j-x_{k+1}', x_j-x_{k+2}')\gamma^{(k+2)}(t,\underline{x}_{k+2};\underline{x}_{k+2}') \label{Bj operator general}\\
 &\hspace{1cm}- \int dx_{k+1}dx_{k+2} dx_{k+1}'dx_{k+2}'\nonumber\\
 &\hspace{2cm} V_\infty(x_j'-x_{k+1}, x_j'-x_{k+2}) V_\infty(x_j'-x_{k+1}', x_j'-x_{k+2}')\gamma^{(k+2)}(t,\underline{x}_{k+2};\underline{x}_{k+2}'). \nonumber
\end{align}


\subsection{Organization of the paper}
This paper is organized as follows.  In section \ref{sec: comb} we present the road map for the proof of the main theorems and reduce the the main theorems to Proposition \ref{prop: zero trace norm}. We illustrate with an example how to factorize solutions in section \ref{sec: example factorization}.  In section \ref{sec: distinguished tree graph}, we introduce tree graphs to illustrate our decomposition of each factor, and present properties of the associated kernels. The proof of Proposition \ref{prop: zero trace norm} occupies section \ref{sec: recursive bounds}. In appendix \ref{sec: multilin estimates}, we prove several multilinear estimates that we use section \ref{sec: recursive bounds}.

\section{Outline of the Proof}   \label{sec: comb}

We describe the strategy to prove uniqueness in more detail.

\subsection{Setup}
Let $\{\gamma_1^{(k)}(t)\}_{k\in\mathbb{N}}$ and $\{\gamma_2^{(k)}(t)\}_{k\in\mathbb{N}}$ be two mild solutions in $L_{t\in[0,T)}^{\infty}\dot{\mathfrak{H}}^1$ that solve \eqref{integral eqn} with the same initial data, and are either admissible or limiting hierarchies. To prove uniqueness, we will show that their difference $\{\gamma^{(k)}(t)\}_{k\in\mathbb{N}}$, given by
\begin{equation}   \label{diff mild sols}
 \gamma^{(k)}(t):=\gamma_1^{(k)}(t)-\gamma_2^{(k)}(t),  \qquad k\in\mathbb{N},
\end{equation}
is zero. By linearity, the difference $\{\gamma^{(k)}(t)\}_{k\in\mathbb{N}}$ solves the GP (or Hartree) hierarchy with zero initial data. Therefore, it suffices to prove the following. 
\begin{prop}  \label{prop: zero trace norm}
Suppose that $\{\gamma^{(k)}(t)\}_{k\in\mathbb{N}}$ is a mild solution to \eqref{hierarchy eqn} with zero initial data, and that it is either admissible or a limiting hierarchy.\\
$(i)$ If $\{\gamma^{(k)}(t)\}_{k\in\mathbb{N}}$ solves the quintic GP hierarchy and $\|\{\gamma^{(k)}(t)\}_{k\in\mathbb{N}}\|_{L_{t\in[0,T)}^\infty\dot{\mathfrak{H}}^1}$ is sufficiently small, then
\begin{equation}  \label{zero trace norm}
{\rm Tr}(|R^{(k,-1)}\gamma^{(k)}(t)|)=0, \quad \forall k\in\mathbb{N}. 
\end{equation}
$(ii)$ If $\{\gamma^{(k)}(t)\}_{k\in\mathbb{N}}$ solves the quintic Hartree hierarchy and $V\in L^{1+}$, then \eqref{zero trace norm} holds.
\end{prop}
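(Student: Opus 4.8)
The plan is to follow the Chen--Hainzl--Pavlovi\'c--Seiringer scheme \cite{CHPS}: iterate the Duhamel formula, reduce the resulting sum by a board-game argument, express each surviving term through the quantum de Finetti theorem, and then close everything with the multilinear estimates of the appendix. Since $\gamma^{(k)}(0)=0$, the integral equation \eqref{integral eqn} becomes $\gamma^{(k)}(t)=i\lambda\sum_{j=1}^k\int_0^t U^{(k)}(t-s)B_{j;k+1,k+2}\gamma^{(k+2)}(s)\,ds$, and iterating it $n$ times writes $\gamma^{(k)}(t)$ as a sum of $\prod_{\ell=0}^{n-1}(k+2\ell)$ nested time integrals acting on $\gamma^{(k+2n)}$. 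This factorially growing collection of terms is tamed by the Erd\"os--Schlein--Yau combinatorial argument in the Klainerman--Machedon board-game formulation \cite{KM}, in the quintic form of Chen--Pavlovi\'c \cite{CPquintic}, which groups the terms into at most $C^n$ equivalence classes, each represented by a single ``admissible'' contraction pattern integrated over an enlarged time simplex.

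For each representative term I would then invoke the quantum de Finetti theorem --- Theorem \ref{thm: strong de Finetti} when the hierarchy is admissible, Theorem \ref{thm: weak de Finetti} when it is a limiting hierarchy --- to write $\gamma^{(k+2n)}(s)$, on the relevant time slice, as an average $\int d\mu_s(\phi)\,\Ket{\phi}\Bra{\phi}^{\otimes(k+2n)}$ of factorized states. Substituting this into the iterated Duhamel expansion and organizing the nested contractions according to the tree-graph structure of Section \ref{sec: distinguished tree graph} reduces the estimation of $\mathrm{Tr}(|R^{(k,-1)}\gamma^{(k)}(t)|)$ to a product of $n$ multilinear integrals, each of which is controlled by Proposition \ref{prop:multilinear gp} (respectively Proposition \ref{prop:multilinear hartree} together with Beckner's convolution inequality \cite{beckner} in the Hartree case), using the a priori bound $\mathrm{Tr}(|R^{(k,1)}\gamma^{(k)}|)<M^{2k}$ to convert the positive-order control into the negative-order quantity \eqref{zero trace norm}. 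In the GP case each iteration costs a factor $\lesssim M^{2}$, so the whole expansion is $\lesssim (CM^{2})^{n}$ after absorbing the combinatorial factor $C^{n}$ from the board game; for $M$ small this tends to $0$ as $n\to\infty$, which forces \eqref{zero trace norm}. In the Hartree case $V\in L^{r}$ with $r>1$ is scaling-subcritical, so the multilinear estimate gains a positive power of each time increment, the $n$ nested integrations produce an extra $t^{n}/n!$, and this defeats $C^{n}$ for every fixed $t$ --- hence no smallness is needed.

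The main obstacle is precisely the scaling-critical nature of the GP problem. The dispersive estimate \eqref{dispersive estimate} used in \cite{UniqueLowReg} is not time-integrable at the endpoint exponent ($p=6$) forced by the critical space, so one cannot gain a power of time per iteration; the substitute --- Strichartz estimates combined with a negative-order Sobolev bound, which is the content of Appendix \ref{sec: multilin estimates} --- yields only a time-independent constant at each step, and it is exactly this that makes the smallness of $M$ indispensable for $(CM^{2})^{n}\to 0$. A further technical crux is the careful bookkeeping of the tree graphs so that the $n$ multilinear estimates compose without loss of derivatives, and the handling of limiting hierarchies, which are not admissible and therefore require the weak de Finetti theorem and an approximation argument in order to apply the factorized representation time slice by time slice.
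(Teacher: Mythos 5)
Your plan follows exactly the scheme the paper uses: Duhamel iteration with zero initial data, Erd\"os--Schlein--Yau/Klainerman--Machedon board-game reduction in the Chen--Pavlovi\'c quintic form, quantum de Finetti (strong or weak), tree-graph bookkeeping, and then closing via the multilinear estimates of the appendix. The structure of the argument and the role of the a priori $\dot{\mathfrak H}^1$ bound are all identified correctly.

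Two quantitative points in your sketch, however, misstate the mechanism by which the iteration actually closes. First, each of the $n$ Duhamel steps introduces a factor of $\|\phi\|_{\dot H^1}^4$ (not $\|\phi\|_{\dot H^1}^2$), since every contraction $B_{j;\ell,\ell+1}$ consumes two copies of $\pphi$ and each copy contributes two factors of $\|\phi\|_{\dot H^1}$; the final bound scales like $(CM^4)^n$, which is still summably small for $M$ small, but your $M^2$ bookkeeping is off. Second, and more importantly, the subcritical gain in the Hartree case is \emph{not} the factorial $t^n/n!$ that one usually harvests from the nested time simplex. In this argument the time integrals are estimated by the $L^1_t$ norms appearing in Proposition \ref{prop:multilinear hartree}, which already absorb the simplex; the saving comes instead from the factor $T^{3\epsilon}$ per iteration produced by Beckner's convolution estimate and Strichartz (Lemma \ref{induction}), giving a geometric factor $(CT^{3\epsilon}\|V_\infty\|_{L^{1/(1-\epsilon)}})^{n-1}$ that vanishes as $n\to\infty$ once $T$ is taken small relative to $M$ (and then one passes to all of $[0,T)$ by iterating on subintervals). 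At the endpoint $\epsilon=0$ (GP, $V_\infty=\lambda\delta\otimes\delta$) that factor degenerates to a time-independent constant, which is precisely why the smallness of $M$ becomes indispensable; your description of the obstruction coming from the non-integrability of the critical dispersive estimate is correct, but the substitute mechanism in the subcritical case should be attributed to the $T^{3\epsilon}$ gain in the trilinear convolution estimate rather than to simplex volume.
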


\subsection{Duhamel expansion}
To show \eqref{zero trace norm}, we first generate a Duhamel expansion as follows. For each $k\in\mathbb{N}$, $\gamma^{(k)}(t)$ solves 
\begin{equation}  \label{diff integral eqn}
 \gamma^{(k)}(t)=i\lambda \sum_{j=1}^k \int_0^t U^{(k)}(t-t_1)B_{j;k+1,k+2}\gamma^{(k+2)}(t_1)dt_1.
\end{equation}
Fix $k\in\mathbb{N}$. Iterating the integral equation \eqref{diff integral eqn} $(n-1)$ times, we write 
\begin{equation}  \label{Duhamel n fold}
 \gamma^{(k)}(t)=(i\lambda)^n \int_{t_{n}\leq \cdots \leq t_1\leq t} U^{(k)}(t-t_1)B_{k+2} \cdots U^{(k+2n-2)}(t_{n-1}-t_{n})B_{k+2n}\gamma^{(k+2n)}(t_{n})dt_1\cdots dt_{n}.
\end{equation}
Here, for each $r\geq 1$, the \emph{combined contraction operator} is the sum of $k+2(r-1)$ many operators,
$$B_{k+2r}:=\sum_{j=1}^{k+2(r-1)} B_{j;k+2r-1,k+2r}.$$ 
For notational convenience, we introduce the following notation. 
$$U_{j,j'}^{(i)}:=U^{(i)}(t_j-t_{j'}),$$
$$\underline{t}_{n} := (t,t_1,\cdots,t_{n}),\quad t_0=t,$$
$$J^{k}(\underline{t}_{n}):=U_{0,1}^{(k)}B_{k+2}U_{1,2}^{(k+2)}B_{k+4} \cdots U_{n-1,n}^{(k+2n-2)}B_{k+2n}\gamma^{(k+2n)}(t_{n}).$$
Then $\gamma^{(k)}(t)$ in \eqref{Duhamel n fold} can be expressed in a compact form as
\begin{equation}   \label{Duhamel n fold in Jk}
 \gamma^{(k)}(t)= (i\lambda)^n\int_{t_{n}\leq \cdots \leq t_1\leq t} J^k(\underline{t}_{n})d\underline{t}_{n}.
\end{equation}
 
One may have observed that for fixed $k$, the number of terms in $J^{k}(\underline{t}_{n})$ is $k(k+2)\cdots(k+2n-2)\sim \mathcal{O}((2n)!)$. This factorial growth on the number of Duhamel expansion terms is the first difficulty before we proceed with the proof of proposition \ref{prop: zero trace norm}. As a preparation, we will present a summary of the combinatorial reduction process in section \ref{subsec: comb reduction} to reduce $J^{k}(\underline{t}_{n})$ into a smaller number of terms that we can control.

\subsection{Combinatorial reduction} \label{subsec: comb reduction}
 
In the celebrated works \cite{ESY06, ESY07, ESY09, ESY10}, Erd\"os-Schlein-Yau developed a sophisticated combinatorial arguments to reduce the number of Duhamel terms. Later, Klainerman and Machedon \cite{KM} rephrased this as a board game, which was extended to the quintic GP hierarchy by Chen-Pavlovi\'c in \cite{CPquintic}. Since we will use the same arguments, we only present the notation and key reduction steps in this section. We refer the readers to \cite{CPquintic} for the proofs of the related lemmas and theorems.

Let $\sigma$ be a map from $\{k + 1, k + 2, \cdots, k + 2n - 1 \}$ to $\{1, 2, 3, \cdots, k + 2n - 2 \}$ such that $\sigma(2)=1$ and $\sigma(j)<j$ for all $j$. $\mathcal{M}_{k,n}$ denotes the set of all such mappings. Then we have that 
\begin{equation}  \label{Jk and Jksigma}
 J^{k}(\underline{t}_{n}) = \sum_{\sigma \in \mathcal{M}_{k,n}} J^{k}(\underline{t}_{n};\sigma), 
\end{equation}
where
\begin{equation}   \label{Jk before factorization}
 J^k(\underline{t}_{n};\sigma)=U^{(k)}_{0,1}B_{\sigma(k+1);k+1,k+2} U^{(k+2)}_{1,2}\cdots U^{(k+2n-2)}_{n-1,n}B_{\sigma(k+2n-1);k+2n-1,k+2n}(\gamma^{(k+2n)}(t_{n}))
\end{equation}
is a basic term in $J^{k}(\underline{t}_{n})$.
 
Next, for each $\sigma\in \mathcal{M}_{k,n}$ there is a $(k+2n-1)\times n$ matrix corresponding to it. This matrix can be reduced to a special upper echelon matrix that corresponds to $\sigma_s$ via finite many so called \emph{acceptable moves}. This transformation defines an equivalence relation among all the maps in $\mathcal{M}_{k,n}$. If $\sigma$ and $\sigma_s$ are equivalent, we denote this equivalence by $\sigma \sim \sigma_s$. From each equivalence classes, we pick one map that corresponds to a special upper echelon matrix, denote it by $\sigma_s$. Theorem 7.4 in \cite{CPquintic} confirms that there is a subset $D_{\sigma_s,t} \subset [0,t]^n$, such that
\begin{equation}   \label{equiv class integral}
\sum\limits_{\sigma \sim \sigma_s}\int_0^{t} ... \int_0^{t_{n-1}} J^k(\underline{t}_{n};\sigma)dt_{1} \dots dt_{n} = \int\limits_{D_{\sigma_s, t}} J^k(\underline{t}_{n};\sigma_s)dt_{1} \dots dt_{n}.
\end{equation}
Hence we have a new formula for $\gamma^{(k)}(t)$
\begin{equation}  \label{Duhamel n fold in Jk sigma}
  \gamma^{(k)}(t)=\sum_{\sigma \in \mathcal{M}^s_{k,n}} \int_{D_{\sigma,t}} J^k(\underline{t}_{n};\sigma) d\underline{t}_{n},
\end{equation} 
where $\mathcal{M}^s_{k,n}$ is the union of all maps that correspond to special upper echelon matrices. By Lemma 7.3 of \cite{CPquintic}, $\#(\mathcal{M}^s_{k,n}) \leq 2^{k+3n-2}.$ \footnote[3]{The multiplier $2^{k+3n-2}$ is affordable to us, since it can be absorbed in $(CT)^{n}$.} 

\subsection{Quantum de Finetti theorem}
After decomposing $\gamma^{(k)}$ into a sum, we use the \emph{quantum de Finetti} theorems to express each term in a factorized form. The quantum de Finetti theorem has a strong and weak version, and pertains to to bosonic density matrices that are either admissible or obtained as a weak-$*$ limit, respectively. We state both the strong and weak versions \cite{LNR} below to be used in section \ref{subsec: comb reduction}.

\begin{thm}[Strong quantum de Finetti theorem] \label{thm: strong de Finetti}
If a sequence $\{\gamma^{(k)}\}_{k\in\mathbb{N}}$ of bosonic density matrices on $L_{sym}^2(\mathbb{R}^{3k})$ is admissible, then there exists a unique Borel probability measure $\mu$, supported on the unit sphere $S\subset L^2(\mathbb{R}^3)$ and invariant under multiplication of $\phi\in L^2(\mathbb{R}^3)$ by complex numbers of modulus one, such that
\begin{equation}  \label{de finetti cond}
\gamma^{(k)}=\int d\mu(\phi) (|\phi\ra\la\phi|)^{\otimes k},\quad k\in \mathbb{N}.  
\end{equation}
\end{thm}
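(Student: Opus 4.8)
The plan is to follow the constructive, geometric proof of Lewin--Nam--Rougerie, in which the measure $\mu$ is produced explicitly as a weak limit of Husimi-type (lower-symbol) measures attached to the $\gamma^{(N)}$'s; I would first settle the case of a finite-dimensional one-particle space and then pass to the limit over an exhausting sequence of finite-dimensional subspaces of $L^2(\mathbb{R}^3)$. An alternative is the operator-algebraic route of St{\o}rmer and Hudson--Moody: realize $\{\gamma^{(k)}\}_{k\in\mathbb{N}}$, via admissibility, as an $S_\infty$-invariant state on the algebra of symmetric polynomial observables and invoke a noncommutative ergodic decomposition whose extreme points are exactly the product states $(\pphi)^{\otimes\infty}$; I mention this only as a cross-check.

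First I would fix a finite-dimensional subspace $\mathfrak{h}\subset L^2(\mathbb{R}^3)$, $\dim\mathfrak{h}=d$, with $\gamma^{(N)}$ acting on the bosonic space $\mathfrak{h}^{\otimes_s N}$, and use the coherent-state resolution of the identity
\[
\dim\!\big(\mathfrak{h}^{\otimes_s N}\big)\int_{\phi\in\mathfrak{h},\,\|\phi\|=1}(\pphi)^{\otimes N}\,du(\phi)=\textup{Id}_{\mathfrak{h}^{\otimes_s N}},
\]
with $du$ the normalized uniform measure on the unit sphere of $\mathfrak{h}$, to define the probability measure $d\mu_N(\phi):=\dim(\mathfrak{h}^{\otimes_s N})\,\la\phi^{\otimes N},\gamma^{(N)}\phi^{\otimes N}\ra\,du(\phi)$. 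The crux of this step is the quantitative estimate
\[
\textup{Tr}\Big(\Big|\,\textup{Tr}_{k+1,\dots,N}\gamma^{(N)}-\int(\pphi)^{\otimes k}\,d\mu_N(\phi)\,\Big|\Big)\le \frac{C(d,k)}{N},
\]
a representation-theoretic/combinatorial computation: expand the partial trace of the rank-one coherent projector $(\pphi)^{\otimes N}$ and estimate the perturbation of $(\pphi)^{\otimes k}$ caused by the coherent-state smearing, where the concentration $|\la\phi^{\otimes N},\psi^{\otimes N}\ra|^2=|\la\phi,\psi\ra|^{2N}$ at $\phi=\psi$ produces the $1/N$ gain. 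Since the hierarchy is admissible, $\textup{Tr}_{k+1,\dots,N}\gamma^{(N)}=\gamma^{(k)}$ \emph{exactly}; and the unit sphere of $\mathfrak{h}$ is compact, so along a subsequence $\mu_N\rightharpoonup\mu$, and letting $N\to\infty$ gives $\gamma^{(k)}=\int(\pphi)^{\otimes k}\,d\mu(\phi)$ for all $k$, with $\mu$ supported on the unit sphere of $\mathfrak{h}$. Averaging $\mu$ over global phases $\phi\mapsto e^{i\theta}\phi$ changes no moment, so $\mu$ may be taken phase-invariant.

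To remove the finite-dimensional restriction, I would choose finite-rank orthogonal projections $P_d\uparrow\textup{Id}$ on $L^2(\mathbb{R}^3)$, apply the previous step to a suitable renormalization of $P_d^{\otimes k}\gamma^{(k)}P_d^{\otimes k}$ to obtain phase-invariant measures $\mu_d$ on the unit ball of $P_d L^2(\mathbb{R}^3)$, show the family $\{\mu_d\}$ is tight for the weak topology of the closed unit ball of $L^2(\mathbb{R}^3)$, and extract by a diagonal argument a limit $\mu$ on that ball. The identities $\gamma^{(k)}=\int(\pphi)^{\otimes k}\,d\mu(\phi)$ persist in the limit because the maps $\phi\mapsto\la\psi,(\pphi)^{\otimes k}\psi\ra$ are weakly continuous and each $\gamma^{(k)}$ is trace class with $\textup{Tr}\,\gamma^{(k)}=1$. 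To see that $\mu$ is supported on the sphere $S$ rather than merely the ball, integrate the moment identity: $1=\textup{Tr}\,\gamma^{(k)}=\int\|\phi\|_{L^2}^{2k}\,d\mu(\phi)$ for every $k$, while $\|\phi\|_{L^2}^{2k}\downarrow\mathbf{1}_{\{\|\phi\|_{L^2}=1\}}$, so dominated convergence forces $\mu(\{\|\phi\|_{L^2}<1\})=0$. For uniqueness, two phase-invariant probability measures on $S$ sharing all moments $\int(\pphi)^{\otimes k}\,d\mu(\phi)$ agree against every function $\phi\mapsto\la\psi_1\otimes\cdots\otimes\psi_k,\,(\pphi)^{\otimes k}(\psi_1\otimes\cdots\otimes\psi_k)\ra$ and hence against the $\ast$-algebra these generate; a Stone--Weierstrass argument on each finite-dimensional sphere (where this algebra separates points modulo phase), transported via the $P_d$'s, shows the measures coincide.

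The step I expect to be the main obstacle is the passage from finite to infinite dimensions: the unit sphere of $L^2(\mathbb{R}^3)$ is not weakly compact, the compressions $P_d^{\otimes k}\gamma^{(k)}P_d^{\otimes k}$ do not themselves form an exactly admissible hierarchy, and one must rule out loss of mass both in the weak limit of the $\mu_d$ and in the final upgrade from the ball to the sphere. This is precisely where the strong hypothesis --- admissibility, i.e.\ the consistency relations $\gamma^{(k)}=\textup{Tr}_{k+1}\gamma^{(k+1)}$, which in particular pin down $\textup{Tr}\,\gamma^{(k)}\equiv 1$ along a fixed hierarchy --- is essential, and it is the point at which the strong and weak versions of the de Finetti theorem genuinely diverge.
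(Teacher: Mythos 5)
The paper does not prove Theorem \ref{thm: strong de Finetti}; it records it as a known result and cites Lewin--Nam--Rougerie \cite{LNR} and Ammari--Nier \cite{ANphase} for the density-matrix formulation, with Stormer \cite{Stormer} and Hudson--Moody \cite{HudsonMoody} for the earlier operator-algebraic versions. Your sketch correctly reconstructs the LNR constructive strategy: the coherent-state resolution of the identity on $\mathfrak{h}^{\otimes_s N}$ and the quantitative finite-dimensional de Finetti estimate with an $O(1/N)$ error; the fact that admissibility makes $\textup{Tr}_{k+1,\dots,N}\gamma^{(N)}=\gamma^{(k)}$ exact so that the finite-dimensional limit gives the representation directly; the passage to infinite dimensions by exhausting with finite-rank projections $P_d$ and a tightness/diagonal extraction on the weakly compact closed unit ball; the upgrade from ball to sphere via the moment identity $1=\textup{Tr}\,\gamma^{(k)}=\int\|\phi\|^{2k}\,d\mu(\phi)$ together with monotone/dominated convergence; and uniqueness via moments separating phase-equivalence classes on each finite-dimensional sphere. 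You also correctly identify the Stormer/Hudson--Moody ergodic-decomposition argument as the classical alternative. The one step that remains an outline rather than a proof is precisely the one you flag: the compressions $P_d^{\otimes k}\gamma^{(k)}P_d^{\otimes k}$ are not an admissible hierarchy, so establishing tightness of $\{\mu_d\}$ and the persistence of the moment identities as $d\to\infty$ requires the careful bookkeeping that LNR carry out; this is also where the strong and weak versions of the theorem genuinely diverge. As a reconstruction of the cited proof rather than of an argument appearing in this paper, your proposal is sound.
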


\begin{thm}[Weak quantum de Finetti theorem]\label{thm: weak de Finetti}
If a sequence $\{\gamma^{(k)}\}_{k\in\mathbb{N}}$ of bosonic density matrices on $L_{sym}^2(\mathbb{R}^{3k})$ is a limiting hierarchy, then there exists a unique Borel probability measure $\mu$, supported on the unit ball $\mathcal{B}\subset L^2(\mathbb{R}^3)$ and invariant under multiplication of $\phi\in L^2(\mathbb{R}^3)$ by complex numbers of modulus one, such that \eqref{de finetti cond} holds.  
\end{thm}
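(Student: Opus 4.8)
This statement is the weak quantum de Finetti theorem of Lewin--Nam--Rougerie \cite{LNR}, and I would recall its proof by finite-dimensional localization; the strong version (Theorem~\ref{thm: strong de Finetti}) is not available here, since a limiting hierarchy need not be admissible. Write $\mathfrak{h}:=L^2(\mathbb{R}^3)$, so that $\gamma^{(k)}$ is the weak-$*$ limit, in the trace class $\mathfrak{S}^1(\mathfrak{h}^{\otimes k})$, of the marginals $\gamma_N^{(k)}=\textup{Tr}_{k+1,\dots,N}\gamma_N^{(N)}$ of the symmetric non-negative trace-one states $\gamma_N^{(N)}$ on $\mathfrak{h}^{\otimes N}$. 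Fix an orthonormal basis $(e_i)_{i\ge1}$ of $\mathfrak{h}$, put $V_\ell:=\textup{span}\{e_1,\dots,e_\ell\}$ with orthogonal projection $P_\ell$, and let $\mathcal{B}_\ell:=\{\phi\in V_\ell:\|\phi\|\le1\}$. The key input is a quantitative finite-dimensional de Finetti estimate with localization (obtained by localizing the $N$-body state to $V_\ell$ --- equivalently, passing to the truncated bosonic Fock space over $V_\ell$ --- and applying the quantitative finite-dimensional de Finetti bound; see \cite{LNR} and the references therein): for each $\ell$ and each $k\le N$ there is a Borel probability measure $\mu_{N,\ell}$ on $\mathcal{B}_\ell$ with
\begin{equation*}
\Big\|\,P_\ell^{\otimes k}\gamma_N^{(k)}P_\ell^{\otimes k}-\int_{\mathcal{B}_\ell}(\pphi)^{\otimes k}\,d\mu_{N,\ell}(\phi)\,\Big\|_{\mathfrak{S}^1}\le \frac{C(k)\,\ell}{N}.
\end{equation*}

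Next I would take a double limit. First $N\to\infty$: since each $\mathcal{B}_\ell$ is norm-compact, a diagonal extraction over $\ell$ produces a single subsequence $N_j\to\infty$ along which $\mu_{N_j,\ell}\rightharpoonup\mu_\ell$ weakly for every $\ell$, with each $\mu_\ell$ a Borel probability measure on $\mathcal{B}_\ell$. In the displayed bound the left-hand side is a compression by a fixed finite-rank projection, so it converges in trace norm (each matrix entry pairs $\gamma_{N_j}^{(k)}$ with a rank-one, hence compact, operator, and $\gamma_{N_j}^{(k)}\rightharpoonup^*\gamma^{(k)}$); the right-hand side converges because $\phi\mapsto(\pphi)^{\otimes k}$ is norm-continuous on the compact set $\mathcal{B}_\ell$; and $C(k)\ell/N_j\to0$. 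Hence
\begin{equation*}
P_\ell^{\otimes k}\gamma^{(k)}P_\ell^{\otimes k}=\int_{\mathcal{B}_\ell}(\pphi)^{\otimes k}\,d\mu_\ell(\phi),\qquad\forall k,\ \forall\ell.
\end{equation*}
Now $\ell\to\infty$: regard each $\mu_\ell$ as a probability measure on the closed unit ball $\mathcal{B}\subset\mathfrak{h}$. Since $\mathfrak{h}$ is separable, $\mathcal{B}$ with the weak topology is a compact metrizable space, so the set of Borel probability measures on it is compact for the weak-$*$ topology; extract $\mu_{\ell_m}\rightharpoonup\mu$. Using $P_{\ell_m}\to I$ strongly (whence $P_{\ell_m}^{\otimes k}\gamma^{(k)}P_{\ell_m}^{\otimes k}\to\gamma^{(k)}$ in trace norm, as $\gamma^{(k)}$ is trace class) and the weak-$*$ continuity of $\phi\mapsto(\pphi)^{\otimes k}$ from $(\mathcal{B},\textup{weak})$ into $\mathfrak{S}^1$ (so that $\int(\pphi)^{\otimes k}d\mu_{\ell_m}\rightharpoonup^*\int(\pphi)^{\otimes k}d\mu$), I would pass to the limit $m\to\infty$ in the previous display and obtain $\gamma^{(k)}=\int_{\mathcal{B}}(\pphi)^{\otimes k}\,d\mu(\phi)$ for all $k$. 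Finally, replacing $\mu$ by its average over the action $\phi\mapsto e^{i\theta}\phi$, $\theta\in[0,2\pi)$, preserves this identity because $(|e^{i\theta}\phi\ra\la e^{i\theta}\phi|)^{\otimes k}=(\pphi)^{\otimes k}$, and makes $\mu$ invariant under multiplication by unit-modulus scalars. This gives existence.

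For uniqueness, suppose $\mu$ and $\nu$ are two phase-invariant Borel probability measures on $\mathcal{B}$ with $\int(\pphi)^{\otimes k}d\mu=\int(\pphi)^{\otimes k}d\nu$ for every $k$. Taking matrix elements of these operators in the product basis $\{e_{i_1}\otimes\cdots\otimes e_{i_k}\}$ shows that $\mu$ and $\nu$ share every ``balanced'' moment $\int\big(\prod_{a=1}^k\la e_{i_a},\phi\ra\big)\overline{\big(\prod_{b=1}^k\la e_{j_b},\phi\ra\big)}\,d\mu$, while phase invariance forces every ``unbalanced'' monomial moment (one with unequal holomorphic and antiholomorphic total degree) to vanish for both. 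Hence $\int Q\,d\mu=\int Q\,d\nu$ for every polynomial $Q$ in the coordinate functionals $\phi\mapsto\la e_i,\phi\ra$ and their conjugates. These functionals lie in $C(\mathcal{B})$ (they are weakly continuous and bounded on $\mathcal{B}$), separate points, and together with complex conjugation and the constants generate a unital $*$-subalgebra of $C(\mathcal{B})$; by the Stone--Weierstrass theorem --- applicable because $(\mathcal{B},\textup{weak})$ is a compact metric space --- this subalgebra is dense, so $\int f\,d\mu=\int f\,d\nu$ for all $f\in C(\mathcal{B})$ and therefore $\mu=\nu$. In particular, the measure constructed above is the unique measure with the stated properties.

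The main obstacle is the first step: producing the approximating measures $\mu_{N,\ell}$ with an error that is genuinely quantitative in $N$ and stable under the finite-dimensional truncation. This requires localizing the infinite-dimensional $N$-body state to the block $V_\ell$ and invoking the sharp finite-dimensional de Finetti bound, with care taken because the localization does not conserve particle number (it produces a state on a truncated Fock space rather than on a fixed particle sector). The second subtle point is that no mass may escape in the double limit $N\to\infty$, $\ell\to\infty$; this is exactly why one works throughout with the weakly compact ball $\mathcal{B}$ rather than the sphere $S$, and it is the reason the representing measure in the weak de Finetti theorem is supported on $\mathcal{B}$ rather than on $S$ as in the admissible case.
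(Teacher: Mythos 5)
The paper does not actually prove this theorem: both the strong and weak quantum de Finetti theorems are quoted from Lewin--Nam--Rougerie \cite{LNR} with no argument given, so there is no in-paper proof to compare against. Your sketch is a faithful reconstruction of the \cite{LNR} proof, and the parts you carry out in detail are correct: the finite-rank compressions $P_\ell^{\otimes k}\gamma_{N_j}^{(k)}P_\ell^{\otimes k}$ do converge in trace norm under weak-$*$ convergence of $\gamma_{N_j}^{(k)}$ (entrywise convergence against rank-one operators suffices in finite dimensions); the map $\phi\mapsto(\pphi)^{\otimes k}$ is indeed continuous from $(\mathcal{B},\mathrm{weak})$ to $(\mathfrak{S}^1,\mathrm{weak}\text{-}*)$ since compact operators turn weak convergence into strong convergence; and the uniqueness argument via phase invariance, balanced moments, and Stone--Weierstrass on the weakly compact metrizable ball is the standard one and is complete. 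You are also right to identify the genuine content as the quantitative localized estimate $\|P_\ell^{\otimes k}\gamma_N^{(k)}P_\ell^{\otimes k}-\int_{\mathcal{B}_\ell}(\pphi)^{\otimes k}d\mu_{N,\ell}\|_{\mathfrak{S}^1}\le C(k)\ell/N$, which you import as a black box; that estimate requires the Fock-space localization (handling the loss of particle-number conservation) combined with the Christandl--K\"onig--Mitchison--Renner finite-dimensional bound, and it is precisely where the measure migrates from the sphere to the ball. Since the paper itself cites the whole theorem, black-boxing that lemma is a reasonable level of granularity, but be aware that it, not the limiting arguments, is where all the work lies.
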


There are different formulations of these theorems that are used in different settings. The formulation for density matrices was presented in a paper Lewin, Nam and Rougerie \cite{LNR}, and in a paper by Ammari and Nier \cite{ANphase}. For additional results related the de Finetti theorems, we refer the reader to Diaconis and Freedman \cite{DiaconisFreedman}, Hudson and Moody \cite{HudsonMoody}, and Stormer \cite{Stormer}.

To make sure the de Finetti theorems are applicable, we note that if $\{\gamma_1^{(k)}\}_k$ and $\{\gamma_2^{(k)}\}_k$ are admissible, then so is $\{\gamma^{(k)}\}_k$.  Similarly, if both $\{\gamma_1^{(k)}\}_k$ and $\{\gamma_2^{(k)}\}_k$ are obtained from a weak-$*$ limit, then so is $\{\gamma^{(k)}\}_k$. Thus by Theorem \ref{thm: strong de Finetti} and Theorem \ref{thm: weak de Finetti},  we obtain
\begin{equation}   \label{reduced deFinetti n fold}
  \gamma^{(k)}(t)=\sum_{\sigma \in \mathcal{M}^s_{k,n}} \int_{D_{\sigma,t}}d\underline{t}_{n} \int d\mu_{\underline{t}_{n}} J^k(\underline{t}_{n};\sigma).
\end{equation}
where 
\begin{equation}   \label{Jk in deFinetti}
 J^k(\underline{t}_{n};\sigma)=U^{(k)}_{0,1}B_{\sigma(k+1);k+1,k+2} U^{(k+2)}_{1,2}\cdots U^{(k+2n-2)}_{n-1,n}B_{\sigma(k+2n-1);k+2n-1,k+2n}(|\phi \rangle \langle\phi|)^{(k+2n)}.
\end{equation}

We remark that $J^k(\underline{t}_{n};\sigma)=J^k(\underline{t}_{n};\sigma;\underline{x}_k;\underline{x}'_k)$ depends on $\underline{x}_k,\underline{x}'_k$.   We omit the spatial variables for simplicity. We note that each factor in
\begin{equation*}
 (|\phi \rangle \langle\phi|)^{(k+2n)}(\underline{x}_{k+2n};\underline{x}'_{k+2n})=\prod_{i=1}^{k+2n}(|\phi \rangle \langle\phi|)(x_i;x'_i)
\end{equation*}
is a one-particle kernel, and that we can further decompose $J^k(\underline{t}_{n};\sigma)$ as
\begin{equation}  \label{Jk decomp}
 J^k(t,t_1,\cdots,t_{n};\sigma;\underline{x}_k;\underline{x}'_k)=\prod_{j=1}^k J_j^1(t,t_{l_j,1},\cdots,t_{l_j,m_j};\sigma_j;x_j;x'_j).
\end{equation}

To better explain the reduction procedure, we present an example in section \ref{sec: example factorization}, and then go back to the general case in section \ref{sec: distinguished tree graph}.

\section{Example Factorization}  \label{sec: example factorization}

Consider $k=2, n=4$, and $\rho$ a permutation of $\{1,2,\cdots,n\}$.  The map $\sigma_s$ is represented by the following upper echelon matrix (each highlighted entry in a row is to the left of each highlighted entry in a lower row)
\begin{equation}  \label{Jk example matrix}
 \begin{pmatrix}
 t_{\rho^{-1}(1)} & t_{\rho^{-1}(2)} & t_{\rho^{-1}(3)} & t_{\rho^{-1}(4)} \\
  \mathbf{B_{1;3,4}} & B_{1;5,6} & B_{1;7,8} & B_{1;9,10} \\
  B_{2;3,4} & \mathbf{B_{2;5,6}} & B_{2;7,8} & B_{2,9,10}  \\
  0 & B_{3;5,6} & B_{3;7,8} & B_{3;9,10} \\
  0 & B_{4;5,6} & \mathbf{B_{4;7,8}} & \mathbf{B_{4;9,10}} \\
  0 & 0 & B_{5;7,8} & B_{5;9,10}  \\
  0 & 0 & B_{6;7,8} & B_{6;9,10} \\
  0 & 0 & 0 & B_{7;9,10}  \\
  0 & 0 & 0 & B_{8;9,10}  \\
 \end{pmatrix}
\end{equation}
Then, we have
\begin{equation} \label{Jk example}
J^2(\underline{t}_{4};\sigma)=U^{(2)}_{0,1}B_{1;3,4} U^{(4)}_{1,2}B_{2;5,6} U^{(6)}_{2,3}B_{4;7,8} U^{(8)}_{3,4}B_{4;9,10}.
\end{equation}

We will organize the terms in expansion of $J^2(\underline{t}_{4};\sigma)$ into two one-particle density matrices by examining the effect of the contraction operators starting with the last one on the RHS of \eqref{Jk example}. We denote each factor in the last term $(\Ket{\phi} \Bra{\phi})^{\otimes 10}$ by $u_i$, ordered by increasing index $i$, so that $(\Ket{\phi} \Bra{\phi})^{\otimes 10}=\otimes_{i=1}^{10} u_i$.\\

First of all, in \eqref{Jk example}, the last interaction operator $B_{4;9,10}$ contracts the factor $u_4, u_9$ and $u_{10}$, and leaves all other factors unchanged.
\begin{equation}   \label{B4910 action}
B_{4;9,10} (\otimes_{i=1}^{10} u_i)=u_1\otimes u_2\otimes u_3\otimes \Theta_4 \otimes u_5\cdots \otimes u_8,
\end{equation}
where  $$\Theta_4:=B_{1;2,3}(u_4\otimes u_9\otimes u_{10}).$$
The index $\alpha$ in $\Theta_{\alpha}$ associates $\Theta_{\alpha}$ to the $\alpha$-th interaction operator from the left in \eqref{Jk example}. Since we only run the expansion to the $n$-th level, we have $1\leq \alpha\leq n$. In this specific case, $n=4$, and the $4$th interaction operator is $B_{4;9,10}$. \\

Next, $B_{4;7,8}$ contracts $U_{3,4}^{(8)}\Theta_4, U_{3,4}^{(8)}u_7$ and $U_{3,4}^{(8)}u_8$.
\begin{equation}   \label{B478 action}
 B_{4;7,8}U_{3,4}^{(8)}(\eqref{B4910 action})=(U_{3,4}^{(3)}(u_1\otimes u_2\otimes u_3)) \otimes \Theta_3 \otimes (U_{3,4}^{(2)}(u_5\otimes u_6)),
\end{equation}
where $$\Theta_3:=B_{1;2,3}((U_{3,4}^{(1)}\Theta_4)\otimes (U_{3,4}^{(1)}u_7)\otimes(U_{3,4}^{(1)}u_8)).$$ \\

Then, by the semigroup property, $U_{2,3}^{(i)} U_{3,4}^{(i)}=U_{2,4}^{(i)}$. The operator $B_{2;5,6}$ contracts $U_{2,4}^{(1)}u_2, U_{2,4}^{(1)}u_5$ and $U_{2,4}^{(1)}u_6$, which correspond to the 2nd, 5th, and 6th factors in \eqref{B478 action}.  The other factors are left invariant.
\begin{equation}   \label{B256 action}
 B_{2;5,6}U_{2,3}^{(6)} (\eqref{B478 action})=(U_{2,4}^{(1)}u_1)\otimes \Theta_2 \otimes(U_{2,4}^{(1)}u_3)\otimes (U_{2,3}^{(1)}\Theta_3),
\end{equation}
where $$\Theta_2=B_{1;2,3}(U_{2,4}^{(3)}(u_2\otimes u_5\otimes u_6)).$$ \\

Finally, $B_{1;3,4}$ contracts $U_{1,4}^{(1)}u_1$, $U_{1,4}^{(1)}u_3$, and $U_{1,3}^{(1)}\Theta_3$ and leaves other factors unchanged.
\begin{equation}   \label{B134 action}
 B_{1;3,4}U_{1,2}^{(4)} (\eqref{B256 action})=\Theta_1 \otimes (U_{1,2}^{(1)}\Theta_2),
\end{equation}
where $$\Theta_1=B_{1;2,3}((U_{1,4}^{(1)}u_1)\otimes (U_{1,4}^{(1)}u_3)\otimes (U_{1,3}^{(1)}\Theta_3)).$$
Therefore, $J^2$ can be factorized as
\begin{equation}   \label{factorized Jk example}
 J^2=(U_{0,1}^{(1)}\Theta_1) \otimes (U_{0,2}^{(1)}\Theta_2):=J^1_1 \otimes J^1_2.
\end{equation} \\

Now $J^2$ in \eqref{factorized Jk example} has two factors $J^1_j$ (note $j\leq k=2$), which are $1$-particle matrices. The reason we have such a decomposition is that $B_{\sigma_1(r);r,r+1}$ only affects three $u_i$ each time, and as the contraction processes, all the $u_i$ might be divided into different groups by the contraction connectivity. \\

For $j=1$, after replacing back $u_i=\Ket{\phi}\Bra{\phi}$, $i\leq k+2n=10$, we have
\begin{equation}   \label{relabel J11}
J^1_1=U_{0,1}^{(1)} B_{1;2,3} U_{1,3}^{(2)} B_{3;4,5} U_{3,4}^{(3)} B_{3;6,7}(\Ket{\phi}\Bra{\phi})^{\otimes 7}
\end{equation}
where we relabel the index in operators $B_{\sigma_1(r);r,r+1}$ such that the interaction operators in \eqref{relabel J11} correspond to $B_{1;3,4}, B_{4;7,8}, B_{4;9,10}$ respectively, and leave the connectivity structure among them unchanged. The labeling of function $\sigma_1$ (see the notation in \eqref{Jk decomp}) takes values $\sigma_1(2)=1, \sigma_1(4)=3$, and $\sigma_1(6)=3$. 

For $j=2$, we perform the relabeling in the same spirit find that
\begin{equation}   \label{relabel J12}
 J^1_2=U_{0,2}^{(1)} B_{1;2,3} U_{2,4}^{(3)} (\Ket{\phi}\Bra{\phi})^{\otimes 3},
\end{equation}
where $\sigma_2(2)=1$. \\

We note that for any $\ell<\ell'$, the interaction operators $B_{\sigma(\ell);\ell,\ell+1}$ and $B_{\sigma(\ell');\ell',\ell'+1} $ in $J^2$ (which are highlighted in \eqref{Jk example matrix}) belong to the same factor $J^1_j$ if either $\sigma(\ell)=\sigma(\ell')$ or $\sigma(\ell')=\ell$. In such cases, we consider them as being \emph{connected}. This connectivity structure is exactly the key point of the Duhamel terms that we want to illustrate using \emph{tree graphs}.  We include the detailed definitions and descriptions in section \ref{sec: distinguished tree graph}. \\

We further note that each $\sigma_j$ can be viewed as the restriction of $\sigma$ to $J^1_j$. We call factors that have a free propagator applied to each $\phi$ (like $J^1_2$) \emph{regular}, and factors that have the contractions of $(\Ket{\phi}\Bra{\phi})^{\otimes 3}$ without free propagator in between (like $J^1_1$) \emph{distinguished}. \\

\section{Tree graphs for the general case}   \label{sec: distinguished tree graph}


\subsection{The tree graphs}   \label{subsec: tree graph}
We begin by recalling that, from \eqref{Jk in deFinetti}, $J^k$ is given by
\begin{equation*} 
 J^k(\underline{t}_{n};\sigma)=U^{(k)}_{0,1}B_{\sigma(k+1);k+1,k+2} U^{(k+2)}_{1,2}\cdots U^{(k+2n-2)}_{n-1,n}B_{\sigma(k+2n-1);k+2n-1,k+2n}(|\phi \rangle \langle\phi|)^{\otimes (k+2n)}.
\end{equation*}
where
\begin{align*}
(\pphi)^{\otimes (k+2n)}(\underline{x}_{k+2n};\underline{x}_{k+2n}')=\prod_{i=1}^{k+2n}(\pphi)(x_i;x_i')
\end{align*}
is a product of one-particle kernels.  Since the free evolution operators $U_{j,j'}^{(i)}$ and the contraction operators $B_{\sigma(r);r,r+1}$ preserve the product structure, it follows that we can also decompose
\begin{align}
J^k(t,t_1,\dots,t_{n};\sigma;\underline{x}_{k};\underline{x}_{k}')=\prod_{j=1}^k J^1_j(t,t_{\ell_{j,1}},\dots,t_{\ell_{j,m_j}};\sigma_j;x_{j};x_{j}')\label{J decomposition}
\end{align}
into a product of one-particle kernels $J^1_j$.  We associate to this decomposition $k$ disjoint tree graphs $\tau_1, \tau_2, \dots, \tau_k$. These graphs appear as \emph{skeleton graphs} in \cite{ESY06, ESY07, ESY09, ESY10}. As in \cite{CHPS, UniqueLowReg}, we assign \emph{root, internal,} and \emph{leaf} vertices to each tree $\tau_j$.
\begin{itemize}
 \item A \emph{root} vertex labeled as $W_j$, $j=1,2,\cdots,k$, to represent $J^1_j(x_j;x'_j)$.
 \item An \emph{internal} vertex labeled by $v_\ell$, $\ell=1,2,\cdots,n$, corresponding to $B_{\sigma(k+2\ell-1);k+2\ell-1,k+2\ell}$ and attached to the time variable $t_\ell$.
 \item A \emph{leaf} vertex $u_i$, $i=1,2,\cdots,k+2n$, representing each factor $(\Ket{\phi}\Bra{\phi})(x_i;x'_i)$.
\end{itemize}

Next, we connect the vertices with \emph{edges}, as described below.
\begin{itemize}
 \item If $v_\ell$ is the smallest value of $\ell$ such that $\sigma(k+2\ell-1)=j$, then we connect $v_\ell$ to the root vertex $W_j$ and write $W_j\sim v_\ell$ (or equivalently $W_j\sim B_{\sigma(k+2\ell-1);k+2\ell-1,k+2\ell}$).  If there is no internal vertex connected to a root vertex $W_j$, then we connect $W_j$ to the leaf $u_j$, and write $W_j \sim u_j$.
 \item For any $1<\ell\leq n$, if $\exists \ell'>l$ such that $\sigma(k+2\ell-1)=\sigma(k+2\ell'-1)$ or $\sigma(k+2\ell'-1)=k+2\ell-1$, then we connect $v_{\ell}$ and $v_{\ell'}$ and write $v_{\ell} \sim v_{\ell'}$ (or equivalently $B_{\sigma(k+2\ell-1);k+2\ell-1,k+2\ell}\sim B_{\sigma(k+2\ell'-1);k+2\ell'-1,k+2\ell'}$). In this case, we call $v_\ell$ the \emph{parent vertex} of $v_{\ell'}$, and $v_{\ell'}$ the \emph{child vertex} of $v_\ell$. We denote the three child vertices of $v_\ell$ by $v_{k_{-}(\ell)}$, $v_{k(\ell)}$ and $v_{k_{+}(\ell)}$, with $k_{-}(\ell)<k(\ell)<k_{+}(\ell)$. 
 
 \item When there is no internal vertex with $\ell'>l$ and $k+2\ell-1=\sigma(k+2\ell'-1)$, we connect $v_{\ell}$ to the leaf vertices $u_{k+2\ell-1}$, $u_{k+2\ell}$ and write $v_{\ell} \sim (u_{k+2\ell-1}, u_{k+2\ell})$ (or equivalently $B_{\sigma(k+2\ell-1);k+2\ell-1,k+2\ell}\sim (u_{k+2\ell-1},u_{k+2\ell})$). \\
\end{itemize}

We remark that it follows from the construction above that each root vertex has only one child vertex, and each internal vertex has exactly three child vertices (which can be either internal and leaf). We call the tree $\tau_j$ \emph{distinguished}  if $v_n \in \tau_j$, and \emph{regular} if $v_n\notin \tau_j$. The three leaves connected to $v_n$ are called \emph{distinguished leaf vertices}, and all other leaves are called \emph{regular leaf vertices}.  Clearly, there are $k-1$ regular trees and one distinguished tree in each tree graph. \\

A sample tree graph is given in Figure \ref{Figure binary_tree}, for $J^k$ as in \eqref{Jk example}. Each tree $\tau_j$ has root vertex $W_j$, for $j=1,2$. The leaf vertices $u_1, u_3, u_4, u_7, u_8, u_9, u_{10}$ and the internal vertices $v_1, v_3, v_4$ (or $B_{1;3,4}, B_{4;7,8}, B_{4;9,10}$) are distinguished. $\tau_1$ is the distinguished tree, and is drawn with thick edges. Tree $\tau_2$ with vertices $W_2, v_2, u_2, u_5, u_6$ is the regular tree, and is drawn with thin edges. \\

\begin{figure}
\centering
\def\svgwidth{3.5in}
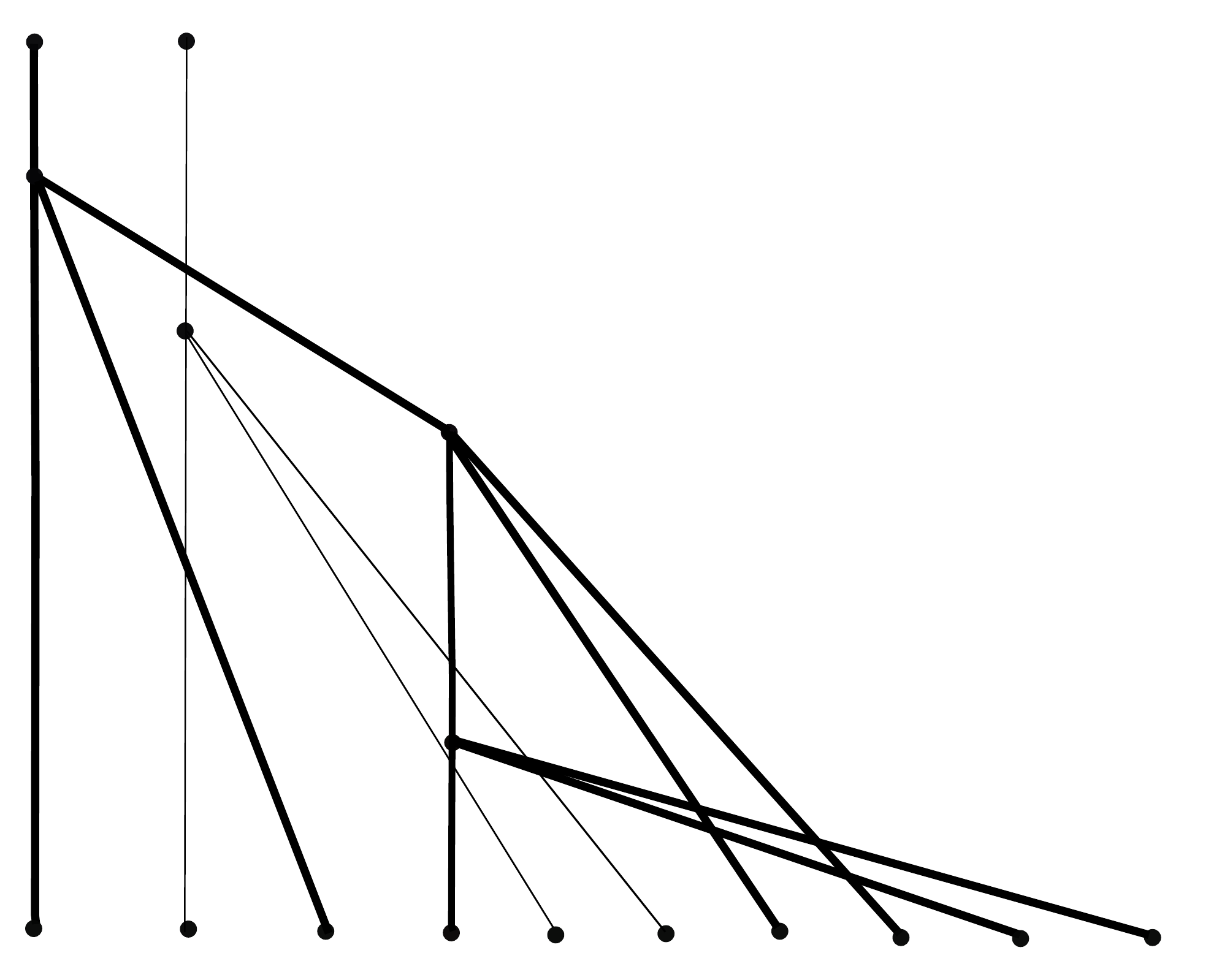
\caption{An example tree graph for $J^k$. It is a disjoint union of two trees $\tau_1$ and $\tau_2$ with root vertices $W_1$ and $W_2$, respectively.  Each tree corresponds to a one-particle kernel in the example in section \ref{sec: example factorization}, where $k=2$ and $n=4$.}\label{Figure binary_tree}
\end{figure}

\subsection{The distinguished one particle kernel $J^1_j$}
Let $\tau_j$ denote the distinguished tree graph.  It has $m_j$ internal vertices $(v_{\ell_j,\alpha})_{\alpha=1}^{m_j}$ and $2m_j+1$ leaf vertices $(u_{j,i})_{i=1}^{2m_j+1}$.  We enumerate the internal vertices with $\alpha\in\{1,\dots,m_j\}$ and the leaf vertices with $\alpha\in\{m_j+1,\dots,3m_j+1\}$.  To simplify notation, we refer to the vertex $v_{j,\alpha}$ by its label $\alpha$.  We observe that $J_j^1$ has the form
\begin{align}
&J_j^1(t,t_{\ell_{j,1}},\dots,t_{\ell_j,m_j};\sigma_j)\label{factorized J unsimplified}\\
&=U^{(1)}(t-t_1)\cdots U^{(1)}(t_{\ell_{j,1}-1}-t_{\ell_{j,1}})B_{\sigma_j(2);2,3}\cdots\nonumber\\
&\hspace{1cm}\cdots B_{\sigma_j(2\alpha-2);2\alpha-2,2\alpha-1}U^{(2\alpha-1)}(t_{\ell_{j,\alpha-1}}-t_{\ell_{j,\alpha-1}+1})\cdots U^{(2\alpha-1)}(t_{\ell_{j,\alpha}-1}-t_{\ell_{j,\alpha}})B_{\sigma_j(2\alpha);2\alpha,2\alpha+1}\cdots\nonumber\\
&\hspace{1cm}\cdots U^{(2m_j-1)}(t_{\ell_j,m_j-1}-t_{\ell_j,m_j})B_{\sigma_j(2m_j),2m_j,2m_j+1}(\pphi)^{\otimes(2m_j+1)}.\nonumber
\end{align}
By the semigroup property
\begin{align*}
U^{(\alpha)}(t)U^{(\alpha)}(s)=U^{(\alpha)}(t+s),
\end{align*}
and the fact that $\sigma_j(2)=1$, \eqref{factorized J unsimplified} reduces to
\begin{align}
&J_j^1(t,t_{\ell_{j,1}},\dots,t_{\ell_j,m_j};\sigma_j)\label{factorized J}\\
&=U^{(1)}(t-t_{\ell_{j,1}})B_{1;2,3}\cdots\nonumber\\
&\hspace{1cm}\cdots B_{\sigma_j(2\alpha-2);2\alpha-2,2\alpha-1}U^{(2\alpha-1)}(t_{\ell_{j,\alpha-1}}-t_{\ell_{j,\alpha}})B_{\sigma_j(2\alpha);2\alpha,2\alpha+1}\cdots\nonumber\\
&\hspace{1cm}\cdots U^{(2m_j-1)}(t_{\ell_j,m_j-1}-t_{\ell_j,m_j})B_{\sigma_j(2m_j);2m_j,2m_j+1}(\pphi)^{\otimes(2m_j+1)},\nonumber
\end{align}
where $\ell_{j,m_j}=n$. \\

\subsection{Definition of the kernels $\Theta_\alpha$ at the vertices of the distinguished tree graph}

In this section, we proceed as in \cite{CHPS}, and recursively assign a kernel $\Theta_\alpha$ to each vertex $\alpha$ of the distinguished tree graph.  The kernels at the vertices of the regular tree graph are defined similarly.  We begin by assigning the kernel
\begin{align*}
\Theta_\alpha(x;x'):=\phi(x)\overline{\phi}(x')
\end{align*}
to the leaf vertex with label $\alpha\in\{m_j+1,\dots,3m_j+1\}$. \\

Next, we determine $\Theta_{m_j}$ at the distinguished vertex $\alpha=m_j$ from the term on the last line of \eqref{factorized J}, given by
\begin{align*}
B_{\sigma_j(2m_j);2m_j,2m_j+1}(\pphi)^{\otimes (2m_j+1)}
&=(\pphi)^{\otimes(\sigma_j(2m_j)-1)}\otimes\Theta_{m_j} \otimes(\pphi)^{\otimes(2m_j+1-\sigma_j(2m_j)-2)}
\end{align*}
where
\begin{align}
\Theta_{m_j}(x;x'):=\tilde{\psi}(x)\overline{\phi}(x')-\phi(x)\overline{\tilde{\psi}}(x')\label{theta_mj}
\end{align}
with $\tilde{\psi}:=|\phi|^4\phi$.  It is obtained from contracting three copies of $\pphi$ at the three leaf vertices $\kappa_-(m_j),\kappa(m_j),\kappa_+(m_j)$ which have $m_j$ as their parent vertex.

Now we are ready to begin the induction.  Let $\alpha\in\{1,\dots,m_j-1\}$.  Suppose that the kernels $\Theta_{\alpha'}$ have been determined for all $\alpha'>\alpha$.  We let $\kappa_-(\alpha),\kappa(\alpha),\kappa_+(\alpha)$ label the three child vertices (of internal or leaf type) of $\alpha$.
Since $\Theta_{\kappa_-(\alpha)},\Theta_{\kappa(\alpha)},$ and $\Theta_{\kappa_+(\alpha)}$ have already been determined, we can now define
\begin{align*}
&\Theta_{\alpha}(x;x')\\
&=B_{1;2,3}((U^{(1)}(t_\alpha-t_{\kappa_-(\alpha)})\Theta_{\kappa_-(\alpha)})\otimes(U^{(1)}(t_\alpha-t_{\kappa(\alpha)})\Theta_{\kappa(\alpha)})\otimes(U^{(1)}(t_\alpha-t_{\kappa_+(\alpha)})\Theta_{\kappa_+(\alpha)}))(x;x').\\
\end{align*}

The induction ends when we obtain the kernel $\Theta_1$ at $\alpha=1$.

\subsection{Key properties of the kernels $\Theta_\alpha$}\label{key properties}  As in \cite{CHPS}, we observe that the kernels $\Theta_\alpha$ satisfy the following properties.
\begin{itemize}
\item $\Theta_\alpha$ can be written as a sum of differences of factorized kernels
\begin{align}
\Theta_\alpha(x;x')=\sum_{\beta_\alpha}c_{\beta_\alpha}^\alpha\chi_{\beta_\alpha}^\alpha(x)\overline{\psi_{\beta_\alpha}^\alpha}(x')\label{factorized}
\end{align}
with at most $2^{m_j-\alpha}$ nonzero coefficients $c_{\beta_\alpha}^\alpha\in\{1,-1\}$.

\item The product $\chi_{\beta_\alpha}^\alpha(x)\overline{\psi_{\beta_\alpha}^\alpha}(x')$ in \eqref{factorized} above is either of the form
\begin{align}
\chi_{\beta_\alpha}^\alpha(x)\overline{\psi_{\beta_\alpha}^\alpha}(x')
&=(U_{\alpha;\kappa_-(\alpha)}\chi_{\beta_{\kappa_-(\alpha)}}^{\kappa_-(\alpha)})(x)\overline{(U_{\alpha;\kappa_-(\alpha)}\psi_{\beta_{\kappa_-(\alpha)}}^{\kappa_-(\alpha)})}(x')\nonumber\\
&\hspace{1cm}A\bigg[V_\infty,(U_{\alpha;\kappa(\alpha)}\chi_{\beta_{\kappa(\alpha)}}^{\kappa(\alpha)})\overline{(U_{\alpha;\kappa(\alpha)}\psi_{\beta_{\kappa(\alpha)}}^{\kappa(\alpha)})},  \notag \\
&\hspace{1.5cm}(U_{\alpha;\kappa_+(\alpha)}\chi_{\beta_{\kappa_+(\alpha)}}^{\kappa_+(\alpha)})\overline{(U_{\alpha;\kappa_+(\alpha)}\psi_{\beta_{\kappa_+(\alpha)}}^{\kappa_+(\alpha)})}\bigg](x)\label{form1}
\end{align}
or
\begin{align}
\chi_{\beta_\alpha}^\alpha(x)\overline{\psi_{\beta_\alpha}^\alpha}(x')
&=(U_{\alpha;\kappa_-(\alpha)}\chi_{\beta_{\kappa_-(\alpha)}}^{\kappa_-(\alpha)})(x)\overline{(U_{\alpha;\kappa_-(\alpha)}\psi_{\beta_{\kappa_-(\alpha)}}^{\kappa_-(\alpha)})}(x')\nonumber\\
&\hspace{1cm}A\bigg[V_\infty,(U_{\alpha;\kappa(\alpha)}\chi_{\beta_{\kappa(\alpha)}}^{\kappa(\alpha)})\overline{(U_{\alpha;\kappa(\alpha)}\psi_{\beta_{\kappa(\alpha)}}^{\kappa(\alpha)})}, \notag \\
&\hspace{1.5cm}(U_{\alpha;\kappa_+(\alpha)}\chi_{\beta_{\kappa_+(\alpha)}}^{\kappa_+(\alpha)})\overline{(U_{\alpha;\kappa_+(\alpha)}\psi_{\beta_{\kappa_+(\alpha)}}^{\kappa_+(\alpha)})}\bigg](x')
\label{form2}
\end{align}
for some values of $\beta_{\kappa_-(\alpha)},\beta_{\kappa(\alpha)},\beta_{\kappa_+(\alpha)}$ that depend on $\beta_\alpha$.
The trilinear operator $A$ is defines as
\begin{equation}   \label{definition of A}
A[V_\infty,f,g](x):=\int\int V_\infty(x-y_1,x-y_2)f(y_1)g(y_2)\,dy_1\,dy_2.
\end{equation}

Observe that above, the function $\chi_{\beta_\alpha}^\alpha$ is either of the quintic form
\begin{align}
\chi_{\beta_\alpha}^\alpha(x)
&=(U_{\alpha;\kappa_-(\alpha)}\chi_{\beta_{\kappa_-(\alpha)}}^{\kappa_-(\alpha)})(x)\nonumber\\
&\hspace{1cm}A\bigg[V_\infty,(U_{\alpha;\kappa(\alpha)}\chi_{\beta_{\kappa(\alpha)}}^{\kappa(\alpha)})\overline{(U_{\alpha;\kappa(\alpha)}\psi_{\beta_{\kappa(\alpha)}}^{\kappa(\alpha)})},\\
&\hspace{1.5cm}(U_{\alpha;\kappa_+(\alpha)}\chi_{\beta_{\kappa_+(\alpha)}}^{\kappa_+(\alpha)})\overline{(U_{\alpha;\kappa_+(\alpha)}\psi_{\beta_{\kappa_+(\alpha)}}^{\kappa_+(\alpha)})}\bigg](x)\label{cubic form}
\end{align}
or the linear form
\begin{align}
\chi_{\beta_\alpha}^\alpha(x)
&=(U_{\alpha;\kappa_-(\alpha)}\chi_{\beta_{\kappa_-(\alpha)}}^{\kappa_-(\alpha)})(x).\label{linear form}
\end{align}
Accordingly, $\psi_{\beta_\alpha}^\alpha$ respectively is either of linear or quintic form, and the product $\chi_{\beta_\alpha}^\alpha(x)\overline{\psi_{\beta_\alpha}^\alpha}(x')$ always has sextic form \eqref{form1} or \eqref{form2}.
\item We call the functions $\chi_{\beta_\alpha}^\alpha,\psi_{\beta_\alpha}^\alpha$ in the sum \eqref{factorized} \emph{distinguished} if they are a function of $|\phi|^4\phi$. In the product on the right hand side of \eqref{form1}, respectively \eqref{form2}, at most one of the six factors is distinguished.  Indeed, this is true for all regular leaf vertices, and for the distinguished vertex \eqref{theta_mj}.  By induction along decreasing values of $\alpha$, it is also true for the internal vertices.
\end{itemize}

As in \cite{CHPS}, we make the following assumption, which simplifies the notation without loss of generality.

\begin{hyp}
We assume that only the functions $\psi_{\beta_1}^1$ and $(\psi_{\beta_{\kappa_+^q(1)}}^{\kappa_+^q(1)})$ are distinguished, where we define
\begin{align*}
\kappa_+^q(1):=\underbrace{\kappa_+(\kappa_+(\dots(\kappa_+}_{q\text{ times}}(1))\dots)).
\end{align*}
\end{hyp}

\section{Proof of Proposition \ref{prop: zero trace norm}}    \label{sec: recursive bounds}

In this section, we prove Proposition \ref{prop: zero trace norm}.  To simplify notation, we denote the time variable $t_{\ell_j,\alpha}$ by $t_\alpha$.  We denote the subtree of $\tau_j$ with root at the vertex $\alpha$ by $\tau_{j,\alpha}$, and let
\begin{align*}
\int \bigg[\prod_{\alpha'\in\tau_{j,\alpha}}dt_{\alpha'}\bigg]:=\int_{[0,T)^{d_\alpha}}\bigg[\prod_{\alpha'\in\tau_{j,\alpha}}dt_{\alpha'}\bigg]
\end{align*}
be integration with respect to all time variables attached to the internal and root vertices of the subtree $\tau_{j,\alpha}$.  Here, the total number of internal and root vertices of the tree $\tau_{j,\alpha}$ is denoted by $d_\alpha$. 
\begin{lemma}\label{induction}
For $V_\infty\in L^\frac{1}{1-\epsilon}(\mathbb{R}^6)$ with small $\epsilon\ge 0$ (or $V_\infty(y,z)=\lambda\delta_0(y)\delta_0(z)$ with $\epsilon=0$ and $\|V_\infty\|_{L^1}:=\lambda$), we have the $\dot H^{-1}$ bound
\begin{align}
&\int \bigg[\prod_{\alpha'\in\tau_{j,\alpha}}dt_{\alpha'}\bigg]\|\psi_{\beta_\alpha}^\alpha\|_{\dot H^{-1}}\|\chi_{\beta_\alpha}^\alpha\|_{\dot H^1}\nonumber\\
&\le CT^{3\epsilon}\|V_\infty\|_{L^\frac{1}{1-\epsilon}}\int \bigg[\prod_{\alpha'\in\tau_{j,\kappa_-(\alpha)}}dt_{\alpha'}\bigg]\|\psi_{\beta_{\kappa_-(\alpha)}}^{\kappa_-(\alpha)}\|_{\dot H^1}\|\chi_{\beta_{\kappa_-(\alpha)}}^{\kappa_-(\alpha)}\|_{\dot H^1}\nonumber\\
&\hspace{1cm}\cdot \int \bigg[\prod_{\alpha'\in\tau_{j,\kappa(\alpha)}}dt_{\alpha'}\bigg]\|\psi_{\beta_{\kappa(\alpha)}}^{\kappa(\alpha)}\|_{\dot H^1}\|\chi_{\beta_{\kappa(\alpha)}}^{\kappa(\alpha)}\|_{\dot H^1}\nonumber\\
&\hspace{1cm}\cdot \int \bigg[\prod_{\alpha'\in\tau_{j,\kappa_+(\alpha)}}dt_{\alpha'}\bigg]\|\psi_{\beta_{\kappa_+(\alpha)}}^{\kappa_+(\alpha)}\|_{\dot H^{-1}}\|\chi_{\beta_{\kappa_+(\alpha)}}^{\kappa_+(\alpha)}\|_{\dot H^1}\label{H^{-1} induction}
\end{align}
and the $\dot H^1$ bound
\begin{align}
&\int \bigg[\prod_{\alpha'\in\tau_{j,\alpha}}dt_{\alpha'}\bigg]\|\psi_{\beta_\alpha}^\alpha\|_{\dot H^1}\|\chi_{\beta_\alpha}^\alpha\|_{\dot H^1}\nonumber\\
&\le CT^{3\epsilon}\|V_\infty\|_{L^\frac{1}{1-\epsilon}}\int \bigg[\prod_{\alpha'\in\tau_{j,\kappa_-(\alpha)}}dt_{\alpha'}\bigg]\|\psi_{\beta_{\kappa_-(\alpha)}}^{\kappa_-(\alpha)}\|_{\dot H^1}\|\chi_{\beta_{\kappa_-(\alpha)}}^{\kappa_-(\alpha)}\|_{\dot H^1}\nonumber\\
&\hspace{1cm}\cdot\int \bigg[\prod_{\alpha'\in\tau_{j,\kappa(\alpha)}}dt_{\alpha'}\bigg]\|\psi_{\beta_{\kappa(\alpha)}}^{\kappa(\alpha)}\|_{\dot H^1}\|\chi_{\beta_{\kappa(\alpha)}}^{\kappa(\alpha)}\|_{\dot H^1}\nonumber\\
&\hspace{1cm}\cdot\int \bigg[\prod_{\alpha'\in\tau_{j,\kappa_+(\alpha)}}dt_{\alpha'}\bigg]\|\psi_{\beta_{\kappa_+(\alpha)}}^{\kappa_+(\alpha)}\|_{\dot H^1}\|\chi_{\beta_{\kappa_+(\alpha)}}^{\kappa_+(\alpha)}\|_{\dot H^1}.\label{H^s induction} 
\end{align}

\begin{proof}
To prove \eqref{H^{-1} induction}, we apply the bound \eqref{inequality:multilinear hartree H^{-1}} (or \eqref{inequality:multilinear gp H^{-1}}) to \eqref{form1} and \eqref{form2} and obtain
\begin{align*}
&\int \bigg[\prod_{\alpha'\in\tau_{j,\alpha}}dt_{\alpha'}\bigg]\|\psi_{\beta_\alpha}^\alpha\|_{\dot H^{-1}}\|\chi_{\beta_\alpha}^\alpha\|_{\dot H^1}\\
&\le CT^{3\epsilon}\|V_\infty\|_{L^\frac{1}{1-\epsilon}}\int_{[0,T)^{d_{\alpha}-1}}\bigg[\prod_{\alpha'\in\tau_{j,\kappa_-(\alpha)}\cup\tau_{j,\kappa(\alpha)}\cup\tau_{j;\kappa_+(\alpha)}}dt_{\alpha'}\bigg]\|\chi_{\beta_{\kappa_-(\alpha)}}^{\kappa_-(\alpha)}\|_{\dot H^1}\\
&\hspace{1cm}\cdot \|\psi_{\beta_{\kappa_-(\alpha)}}^{\kappa_-(\alpha)}\|_{\dot H^1}\|\chi_{\beta_{\kappa(\alpha)}}^{\kappa(\alpha)}\|_{\dot H^1}\|\psi_{\beta_{\kappa(\alpha)}}^{\kappa(\alpha)}\|_{\dot H^{1}}\|\chi_{\beta_{\kappa_+(\alpha)}}^{\kappa_+(\alpha)}\|_{\dot H^1}\|\psi_{\beta_{\kappa_+(\alpha)}}^{\kappa_+(\alpha)}\|_{\dot H^{-1}}\\
&= CT^{3\epsilon}\|V_\infty\|_{L^\frac{1}{1-\epsilon}}\int_{[0,T)^{d_{\kappa_-(\alpha)}}}\bigg[\prod_{\alpha'\in\tau_{j,\kappa_-(\alpha)}}dt_{\alpha'}\bigg]\|\chi_{\beta_{\kappa_-(\alpha)}}^{\kappa_-(\alpha)}\|_{\dot H^1}\|\psi_{\beta_{\kappa_-(\alpha)}}^{\kappa_-(\alpha)}\|_{\dot H^1}\\
&\hspace{1cm}\cdot\int_{[0,T)^{d_{\kappa_-(\alpha)}}}\bigg[\prod_{\alpha'\in\tau_{j,\kappa(\alpha)}}dt_{\alpha'}\bigg]\|\chi_{\beta_{\kappa(\alpha)}}^{\kappa(\alpha)}\|_{\dot H^1}\|\psi_{\beta_{\kappa(\alpha)}}^{\kappa(\alpha)}\|_{\dot H^1}\\
&\hspace{1cm}\cdot\int_{[0,T)^{d_{\kappa_+(\alpha)}}}\bigg[\prod_{\alpha'\in\tau_{j,\kappa_+(\alpha)}}dt_{\alpha'}\bigg]\|\chi_{\beta_{\kappa_+(\alpha)}}^{\kappa_+(\alpha)}\|_{\dot H^1}\|\psi_{\beta_{\kappa_+(\alpha)}}^{\kappa_+(\alpha)}\|_{\dot H^{-1}}.
\end{align*}
In the second step, we performed the $t_\alpha$ integral.  In the second step, we used the fact that the terms $\psi_{\beta_\alpha}^\alpha,\chi_{\beta_\alpha}^\alpha$ depend only on the time variables $t_{\alpha'}$ attached to the vertices of the subtree $\tau_{j,\alpha}$. \\

To prove \eqref{H^s induction}, we apply the bound \eqref{inequality:multilinear hartree H^1} (or \eqref{inequality:multilinear gp H^1})  to \eqref{form1} and \eqref{form2} and obtain
\begin{align*}
&\int \bigg[\prod_{\alpha'\in\tau_{j,\alpha}}dt_{\alpha'}\bigg]\|\psi_{\beta_\alpha}^\alpha\|_{\dot H^{1}}\|\chi_{\beta_\alpha}^\alpha\|_{\dot H^1}\\
&\le CT^{3\epsilon}\|V_\infty\|_{L^\frac{1}{1-\epsilon}}\int_{[0,T)^{d_{\alpha}-1}}\bigg[\prod_{\alpha'\in\tau_{j,\kappa_-(\alpha)}\cup\tau_{j,\kappa(\alpha)}\cup\tau_{j;\kappa_+(\alpha)}}dt_{\alpha'}\bigg]\|\chi_{\beta_{\kappa_-(\alpha)}}^{\kappa_-(\alpha)}\|_{\dot H^1}\\
&\hspace{1cm} \cdot \|\psi_{\beta_{\kappa_-(\alpha)}}^{\kappa_-(\alpha)}\|_{\dot H^1}\|\chi_{\beta_{\kappa(\alpha)}}^{\kappa(\alpha)}\|_{\dot H^1}\|\psi_{\beta_{\kappa(\alpha)}}^{\kappa(\alpha)}\|_{\dot H^{1}}\|\chi_{\beta_{\kappa_+(\alpha)}}^{\kappa_+(\alpha)}\|_{\dot H^1}\|\psi_{\beta_{\kappa_+(\alpha)}}^{\kappa_+(\alpha)}\|_{\dot H^{1}}\\
&= CT^{3\epsilon}\|V_\infty\|_{L^\frac{1}{1-\epsilon}}\int_{[0,T)^{d_{\kappa_-(\alpha)}}}\bigg[\prod_{\alpha'\in\tau_{j,\kappa_-(\alpha)}}dt_{\alpha'}\bigg]\|\chi_{\beta_{\kappa_-(\alpha)}}^{\kappa_-(\alpha)}\|_{\dot H^1}\|\psi_{\beta_{\kappa_-(\alpha)}}^{\kappa_-(\alpha)}\|_{\dot H^1}\\
&\hspace{1cm} \cdot \int_{[0,T)^{d_{\kappa_-(\alpha)}}}\bigg[\prod_{\alpha'\in\tau_{j,\kappa(\alpha)}}dt_{\alpha'}\bigg]\|\chi_{\beta_{\kappa(\alpha)}}^{\kappa(\alpha)}\|_{\dot H^1}\|\psi_{\beta_{\kappa(\alpha)}}^{\kappa(\alpha)}\|_{\dot H^1}\\
&\hspace{1cm} \cdot \int_{[0,T)^{d_{\kappa_+(\alpha)}}}\bigg[\prod_{\alpha'\in\tau_{j,\kappa_+(\alpha)}}dt_{\alpha'}\bigg]\|\chi_{\beta_{\kappa_+(\alpha)}}^{\kappa_+(\alpha)}\|_{\dot H^1}\|\psi_{\beta_{\kappa_+(\alpha)}}^{\kappa_+(\alpha)}\|_{\dot H^{1}}.
\end{align*}
\end{proof}
\end{lemma}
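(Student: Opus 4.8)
The plan is to derive both inequalities by unfolding, at the vertex $\alpha$, the explicit recursive form of the kernels, applying the multilinear estimates of Appendix \ref{sec: multilin estimates} on each time slice, and then integrating the single new time variable $t_\alpha$ and using Fubini. First I would fix $\beta_\alpha$; by the structure recorded in Section \ref{key properties} (see \eqref{factorized}--\eqref{linear form}) it determines the indices $\beta_{\kappa_-(\alpha)},\beta_{\kappa(\alpha)},\beta_{\kappa_+(\alpha)}$ at the three children, and the product $\chi_{\beta_\alpha}^\alpha(x)\overline{\psi_{\beta_\alpha}^\alpha}(x')$ has one of the two sextic forms \eqref{form1}, \eqref{form2}: a linear factor coming from the $\kappa_-(\alpha)$ branch multiplied by a factor $A[V_\infty,\cdot,\cdot]$ (see \eqref{definition of A}) built from the propagated $\kappa(\alpha)$ and $\kappa_+(\alpha)$ kernels, the whole product sitting at $x$ in case \eqref{form1} and at $x'$ in case \eqref{form2}. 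The key bookkeeping input is that, by Section \ref{key properties} and the Hypothesis, at most one of the six factors $U_{\alpha;\kappa_\bullet}\chi_{\beta_{\kappa_\bullet}}^{\kappa_\bullet}$, $U_{\alpha;\kappa_\bullet}\psi_{\beta_{\kappa_\bullet}}^{\kappa_\bullet}$ is distinguished (a function of $|\phi|^4\phi$), and, when it occurs below $\alpha$, it lies on the $\psi$-side of the $\kappa_+(\alpha)$ subtree; this is exactly what forces the $\dot H^{-1}$ norm to be routed onto the $\kappa_+(\alpha)$ branch on the right of \eqref{H^{-1} induction}.

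Next, for each fixed value of the time variables attached to the three child subtrees I would apply the multilinear bound \eqref{inequality:multilinear hartree H^{-1}} (resp. \eqref{inequality:multilinear gp H^{-1}}) to \eqref{form1} and \eqref{form2} to estimate $\|\psi_{\beta_\alpha}^\alpha\|_{\dot H^{-1}}\|\chi_{\beta_\alpha}^\alpha\|_{\dot H^1}$, and likewise \eqref{inequality:multilinear hartree H^1} (resp. \eqref{inequality:multilinear gp H^1}) for $\|\psi_{\beta_\alpha}^\alpha\|_{\dot H^1}\|\chi_{\beta_\alpha}^\alpha\|_{\dot H^1}$. These control the left-hand side by $C\|V_\infty\|_{L^{1/(1-\epsilon)}}$ times a weight that depends on $t_\alpha$ only through the differences $t_\alpha-t_{\kappa_\bullet(\alpha)}$ and is integrable over $t_\alpha\in[0,T)$ with integral $\lesssim T^{3\epsilon}$ (for $V_\infty=\lambda\delta_0\delta_0$ the weight is bounded and $\epsilon=0$), times the Sobolev norms of the propagated children: the $\dot H^1$-norms of $\chi_{\beta_{\kappa_-(\alpha)}}^{\kappa_-(\alpha)},\psi_{\beta_{\kappa_-(\alpha)}}^{\kappa_-(\alpha)},\chi_{\beta_{\kappa(\alpha)}}^{\kappa(\alpha)},\psi_{\beta_{\kappa(\alpha)}}^{\kappa(\alpha)},\chi_{\beta_{\kappa_+(\alpha)}}^{\kappa_+(\alpha)}$ together with the $\dot H^{-1}$-norm of $\psi_{\beta_{\kappa_+(\alpha)}}^{\kappa_+(\alpha)}$ in the first case, and all six $\dot H^1$-norms in the second. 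Since the free propagator $U^{(1)}$ acts as an isometry on each homogeneous Sobolev space, $\|U_{\alpha;\kappa_\bullet}f\|_{\dot H^s}=\|f\|_{\dot H^s}$ for $s\in\{-1,1\}$, so the propagators drop out. Integrating this pointwise bound over all time variables of $\tau_{j,\alpha}$, the $t_\alpha$-integral produces the constant $CT^{3\epsilon}\|V_\infty\|_{L^{1/(1-\epsilon)}}$, while the remaining integrand factors into three pieces, each depending only on the time variables of one of the subtrees $\tau_{j,\kappa_-(\alpha)},\tau_{j,\kappa(\alpha)},\tau_{j,\kappa_+(\alpha)}$; hence Fubini turns the integral over $[0,T)^{d_\alpha-1}$ into the product of the three subtree integrals on the right-hand sides of \eqref{H^{-1} induction} and \eqref{H^s induction}.

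The main obstacle is the multilinear estimate used above — Propositions \ref{prop:multilinear gp} and \ref{prop:multilinear hartree}, established in Appendix \ref{sec: multilin estimates} — since once it is in hand the rest is unitarity of the free flow plus Fubini. For the Hartree hierarchy I would prove it from Strichartz estimates for $e^{it\Delta}$, a negative-order Sobolev estimate to absorb the $\dot H^{-1}$ norm on the distinguished branch, and Beckner's convolution inequality \cite{beckner} to treat $A[V_\infty,\cdot,\cdot]$ for $V_\infty\in L^{1/(1-\epsilon)}$ with small $\epsilon>0$; the slack $\epsilon>0$ is precisely what produces the $T^{3\epsilon}$ gain that later lets the iterated sum converge without a smallness assumption. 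For the GP hierarchy ($\epsilon=0$) the same estimate is scaling-critical — the relevant space-time exponents are the endpoint ones ($p=6$) and the time weight is only borderline integrable, leaving no $T$-power to spare — which is why the iteration in Section \ref{sec: recursive bounds} must be closed under a smallness hypothesis on $M$ rather than unconditionally.
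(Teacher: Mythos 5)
Your proposal is correct and follows essentially the same route as the paper: apply the appendix multilinear estimates to the sextic forms \eqref{form1}--\eqref{form2}, use unitarity of the free flow on $\dot H^{\pm1}$ to strip the propagators, and perform the $t_\alpha$-integral followed by Fubini to factor the remaining integral over the three child subtrees. The additional detail you give — why the $\dot H^{-1}$ norm is carried along the $\kappa_+$ chain under the Hypothesis, and why the Hartree case yields the $T^{3\epsilon}$ gain while GP is critical — is consistent with, and merely elaborates on, the paper's terse argument.
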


We now recursively apply the bounds in the statement Lemma \ref{induction} to conclude the proof of uniqueness of solutions to the quintic GP and Hartree hierarchy.

\begin{prop}\label{H^{-1} integral}
For the distinguished tree $\tau_j$, we have the bound
\begin{align}
&\int_{[0,T)^{m_j-1}}dt_1\dots dt_{m_j-1}{\rm Tr}\bigg(\,\bigg|R^{(1,-1)}J^1_j(t,t_1,\cdots,t_{m_j};\sigma_j)\bigg|\,\bigg)\nonumber\\
&\hspace{3cm}\le 2^{m_j}C^{m_j-1}T^{3\epsilon (m_j-1)}\|V_\infty\|_{L^\frac{1}{1-\epsilon}}^{m_j-1}\|\phi\|_{\dot H^{1}}^{4m_j-3}\|A[V_\infty,|\phi|^2,|\phi|^2]\phi\|_{\dot H^{-1}}.\label{distinguished bound}
\end{align}
\begin{proof}
\begin{align}
&\int_{[0,T)^{m_j-1}}dt_1\dots dt_{m_j-1}{\rm Tr}\bigg(\,\bigg|R^{(1,-1)}J^1_j(t,t_1,\cdots,t_{m_j};\sigma_j)\bigg|\,\bigg)\nonumber\\
&=\int_{[0,T)^{m_j-1}}dt_1\dots dt_{m_j-1}{\rm Tr}\bigg(\,\bigg|R^{(1,-1)}U^{(1)}(t-t_1)\Theta_1\bigg|\,\bigg)\nonumber\\
&\le\sum_{\beta_1}\int_{[0,T)^{m_j-1}}dt_1\cdots dt_{m_j-1}\|\psi_{\beta_1}^1\|_{\dot H^{-1}}\|\chi_{\beta_1}^1\|_{\dot H^{-1}}\nonumber\\
&\le\sum_{\beta_1}\int_{[0,T)^{m_j-1}}dt_1\cdots dt_{m_j-1}\|\psi_{\beta_1}^1\|_{\dot H^{-1}}\|\chi_{\beta_1}^1\|_{\dot H^{1}}\nonumber\\
&\le\sum_{\beta_{\kappa_-(1)},\beta_{\kappa(1)},\beta_{\kappa_+(1)}}CT^{3\epsilon}\|V_\infty\|_{L^\frac{1}{1-\epsilon}}\int_{[0,T)^{d_{\kappa_-(\alpha)}}}\bigg[\prod_{\alpha'\in\tau_{j,\kappa_-(\alpha)}}dt_{\alpha'}\bigg]\|\chi_{\beta_{\kappa_-(\alpha)}}^{\kappa_-(\alpha)}\|_{\dot H^{1}}\|\psi_{\beta_{\kappa_-(\alpha)}}^{\kappa_-(\alpha)}\|_{\dot H^{1}}\label{H^1 step 1}\\
&\hspace{1cm} \cdot \int_{[0,T)^{d_{\kappa(\alpha)}}}\bigg[\prod_{\alpha'\in\tau_{j,\kappa(\alpha)}}dt_{\alpha'}\bigg]\|\chi_{\beta_{\kappa(\alpha)}}^{\kappa(\alpha)}\|_{\dot H^1}\|\psi_{\beta_{\kappa(\alpha)}}^{\kappa(\alpha)}\|_{\dot H^{1}}\label{H^1 step 2}\\
&\hspace{1cm}  \cdot \int_{[0,T)^{d_{\kappa_+(\alpha)}}}\bigg[\prod_{\alpha'\in\tau_{j,\kappa_+(\alpha)}}dt_{\alpha'}\bigg]\|\chi_{\beta_{\kappa_+(\alpha)}}^{\kappa_+(\alpha)}\|_{\dot H^1}\|\psi_{\beta_{\kappa_+(\alpha)}}^{\kappa_+(\alpha)}\|_{\dot H^{-1}}\label{H^-1 step}
\end{align}
In the last step, we performed the $t_1$ integral using \eqref{H^{-1} induction}.  Now, to bound \eqref{H^1 step 1} and \eqref{H^1 step 2}, we iterate the $H^{1}$ bound \eqref{H^s induction}.  To bound \eqref{H^-1 step}, we iterate both \eqref{H^{-1} induction} and \eqref{H^s induction}.  This establishes \eqref{distinguished bound}.
\end{proof}
\end{prop}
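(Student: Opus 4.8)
The plan is to unwind the integrated trace norm of $J^1_j$ into a sum of products of Sobolev norms of the factorized pieces of the kernel $\Theta_1$ sitting at the root of the distinguished tree $\tau_j$, and then to run the recursion of Lemma~\ref{induction} from the root down to the distinguished vertex $m_j$.

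\emph{Step 1 (reduction to the root kernel).} By \eqref{factorized J} and the recursive construction of the $\Theta_\alpha$, one has $J^1_j(t,t_1,\dots,t_{m_j};\sigma_j)=U^{(1)}(t-t_1)\Theta_1$. Since $R^{(1,-1)}$ is a product of Fourier multipliers in the two variables, it commutes with $U^{(1)}(t-t_1)$, and the free evolution is an isometry of $\dot H^{-1}$; hence ${\rm Tr}(|R^{(1,-1)}J^1_j|)={\rm Tr}(|R^{(1,-1)}\Theta_1|)$. Insert the decomposition \eqref{factorized} of $\Theta_1$ into at most $2^{m_j}$ differences of factorized kernels $\chi_{\beta_1}^1(x)\overline{\psi_{\beta_1}^1}(x')$, use that the trace norm of the rank-one operator with kernel $f(x)\overline{g(x')}$ weighted by $R^{(1,-1)}$ equals $\|f\|_{\dot H^{-1}}\|g\|_{\dot H^{-1}}$, and then trade $\dot H^{-1}$ for $\dot H^1$ on the non-distinguished factor $\chi_{\beta_1}^1$ (which, by the dichotomy \eqref{cubic form}--\eqref{linear form}, is a free evolution applied to $\phi$ or to a function built from $\phi$ by $A[V_\infty,\cdot,\cdot]$). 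This yields
\begin{align*}
&\int_{[0,T)^{m_j-1}}{\rm Tr}\bigl(|R^{(1,-1)}J^1_j|\bigr)\,dt_1\cdots dt_{m_j-1}\\
&\qquad\le\sum_{\beta_1}\int_{[0,T)^{m_j-1}}\|\chi_{\beta_1}^1\|_{\dot H^1}\|\psi_{\beta_1}^1\|_{\dot H^{-1}}\,dt_1\cdots dt_{m_j-1},
\end{align*}
with $\psi_{\beta_1}^1$ the distinguished factor (kept in $\dot H^{-1}$).

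\emph{Step 2 (running the recursion).} By the normalization Hypothesis of Section~\ref{key properties}, the only distinguished functions that occur are $\psi_{\beta_1}^1$ and $\psi_{\beta_{\kappa_+^q(1)}}^{\kappa_+^q(1)}$ with $\kappa_+^q(1)=m_j$, so at every split the distinguished factor descends into the $\kappa_+$-child and the distinguished path in $\tau_j$ is $1\to\kappa_+(1)\to\cdots\to m_j$. Apply the $\dot H^{-1}$ bound \eqref{H^{-1} induction} at $\alpha=1$: it carries out the $t_1$-integration, gains a factor $CT^{3\epsilon}\|V_\infty\|_{L^\frac{1}{1-\epsilon}}$, and splits the estimate into two $\dot H^1$-type integrals over the regular subtrees $\tau_{j,\kappa_-(1)}$, $\tau_{j,\kappa(1)}$ and one $\dot H^{-1}$-type integral over $\tau_{j,\kappa_+(1)}$ still carrying the distinguished factor. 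Iterate the $\dot H^1$ bound \eqref{H^s induction} on the two regular subtrees down to their leaves; since the kernel at a regular leaf is $\phi(x)\overline\phi(x')$, each regular leaf produces a factor $\|\phi\|_{\dot H^1}^2$. Apply \eqref{H^{-1} induction} again at $\kappa_+(1)$ and repeat down the distinguished path, shedding two regular subtrees and one more factor $CT^{3\epsilon}\|V_\infty\|_{L^\frac{1}{1-\epsilon}}$ at each step. The recursion terminates at $\alpha=m_j$, where by \eqref{theta_mj} the kernel is $\Theta_{m_j}(x;x')=\tilde\psi(x)\overline\phi(x')-\phi(x)\overline{\tilde\psi}(x')$ with $\tilde\psi=|\phi|^4\phi$; the branch with $\psi$ distinguished is $(\chi,\psi)=(\phi,\tilde\psi)$, contributing $\|\phi\|_{\dot H^1}\|\tilde\psi\|_{\dot H^{-1}}=\|\phi\|_{\dot H^1}\|A[V_\infty,|\phi|^2,|\phi|^2]\phi\|_{\dot H^{-1}}$ (in the GP case $V_\infty(y,z)=\lambda\delta(y)\delta(z)$ by \eqref{potential cases}, so $A[V_\infty,|\phi|^2,|\phi|^2]=\lambda|\phi|^4$ and $|\lambda|=1$). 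Since $\Theta_{m_j}$ is time-independent, the vertex $m_j$ produces no further time integral.

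\emph{Step 3 (bookkeeping) and the main difficulty.} The distinguished tree $\tau_j$ has $m_j$ internal vertices and $2m_j+1$ leaves. Lemma~\ref{induction} is applied once at each of the $m_j-1$ internal vertices other than $m_j$, producing $C^{m_j-1}T^{3\epsilon(m_j-1)}\|V_\infty\|_{L^\frac{1}{1-\epsilon}}^{m_j-1}$ and consuming the $m_j-1$ time integrals; of the $2m_j+1$ leaves, the three children of $m_j$ are absorbed into $\Theta_{m_j}$, and the remaining $2m_j-2$ regular leaves give $\|\phi\|_{\dot H^1}^{2(2m_j-2)}$, which together with the extra $\|\phi\|_{\dot H^1}$ from the $\chi=\phi$ factor at $m_j$ is $\|\phi\|_{\dot H^1}^{4m_j-3}$, times $\|A[V_\infty,|\phi|^2,|\phi|^2]\phi\|_{\dot H^{-1}}$; finally the $\beta_1$-sum has at most $2^{m_j}$ terms, the child indices being determined by the parent index at each split. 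Collecting these factors gives \eqref{distinguished bound}. The analytic substance — the Strichartz-based multilinear estimates behind Lemma~\ref{induction} — is already available, so the real work is combinatorial: checking that the distinguished factor always descends into the $\kappa_+$-branch (exactly what the Hypothesis guarantees), that each internal vertex other than $m_j$ consumes exactly one time integral while $m_j$ consumes none, and that the exponents $4m_j-3$, $m_j-1$ and $2^{m_j}$ emerge correctly. The one genuinely analytic point needing care is the $\dot H^{-1}\to\dot H^1$ trade on the non-distinguished factor in Step~1, which must be justified from the explicit form \eqref{cubic form}--\eqref{linear form} of that factor rather than from an embedding $\dot H^1\hookrightarrow\dot H^{-1}$, which does not hold on $\mathbb{R}^3$.
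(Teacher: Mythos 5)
Your proposal follows essentially the same route as the paper: decompose $\Theta_1$ via \eqref{factorized}, compute the $R^{(1,-1)}$-weighted trace norm of each rank-one piece as $\|\psi_{\beta_1}^1\|_{\dot H^{-1}}\|\chi_{\beta_1}^1\|_{\dot H^{-1}}$, trade to $\|\chi_{\beta_1}^1\|_{\dot H^1}$, then feed into Lemma~\ref{induction}, iterating \eqref{H^{-1} induction} down the distinguished $\kappa_+$-path and \eqref{H^s induction} on the other subtrees, terminating at $\Theta_{m_j}$. Your Step~3 accounting of how the exponents $2^{m_j}$, $C^{m_j-1}T^{3\epsilon(m_j-1)}\|V_\infty\|^{m_j-1}$ and $\|\phi\|_{\dot H^1}^{4m_j-3}$ arise is a useful expansion of what the paper leaves implicit, and your observation that the Hypothesis of Section~\ref{key properties} forces the distinguished factor down the $\kappa_+$-branch is exactly the combinatorial point the paper relies on.

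You are right, however, to single out the replacement of $\|\chi_{\beta_1}^1\|_{\dot H^{-1}}$ by $\|\chi_{\beta_1}^1\|_{\dot H^1}$ as the delicate step; this is precisely what the paper writes in passing from the third to the fourth line of its chain (and Proposition~\ref{H^1 integral} performs the same trade on \emph{both} factors). Your suggested resolution — that it ``must be justified from the explicit form \eqref{cubic form}--\eqref{linear form}'' — does not, as stated, close the gap. By the Hypothesis, $\psi_{\beta_1}^1$ is the distinguished factor, hence at $\alpha=1$ the product is of form \eqref{form2} and $\chi_{\beta_1}^1$ is of linear form \eqref{linear form}, i.e.\ $\chi_{\beta_1}^1=U_{1;\kappa_-(1)}\chi_{\beta_{\kappa_-(1)}}^{\kappa_-(1)}$. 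Unwinding the $\kappa_-$-branch recursively, $\chi_{\beta_1}^1$ can be a pure free evolution $U\phi$ of a leaf, and then $\|\chi_{\beta_1}^1\|_{\dot H^{-1}}=\|\phi\|_{\dot H^{-1}}$, which is not controlled by $\|\phi\|_{\dot H^1}$ (nor by $\|\phi\|_{L^2}\le1$ from de Finetti — the obstruction is at low frequency). So the explicit form does not, by itself, furnish the trade; one would need to bypass the intermediate bound $\|\psi\|_{\dot H^{-1}}\|\chi\|_{\dot H^{-1}}$ and estimate the weighted trace against a mixed $\dot H^{-1}\times\dot H^1$ norm from the start (equivalently, to work with a nonsymmetric weight in place of $R^{(1,-1)}$). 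Since the paper's own proof performs the identical unjustified step, your proposal is faithful to it, but the remark at the end of your Step~3 should be promoted from ``needs care'' to an actual missing argument.
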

\begin{prop}\label{H^1 integral}
For the regular tree $\tau_j$, we have the bound
\begin{align}
&\int_{[0,T)^{m_j}}dt_1\dots dt_{m_j}{\rm Tr}\bigg(\,\bigg|R^{(1,-1)}J^1_j(t,t_1,\cdots,t_{m_j};\sigma_j)\bigg|\,\bigg)\nonumber\\
&\hspace{3cm}\le 2^{m_j}C^{m_j}T^{3\epsilon m_j}\|\phi\|_{\dot H^{1}}^{4m_j+2}.\label{regular bound d ge 2}
\end{align}
\begin{proof}
\begin{align*}
&\int_{[0,T)^{m_j}}dt_1\dots dt_{m_j}{\rm Tr}\bigg(\,\bigg|R^{(1,-1)}J^1_j(t,t_1,\cdots,t_{m_j};\sigma_j)\bigg|\,\bigg)\\
&=\int_{[0,T)^{m_j}}dt_1\dots dt_{m_j}{\rm Tr}\bigg(\,\bigg|R^{(1,-1)}U^{(1)}(t-t_1)\Theta_1\bigg|\,\bigg)\\
&\le\sum_{\beta_1}\int_{[0,T)^{m_j}}dt_1\cdots dt_{m_j}\|\psi_{\beta_1}^1\|_{\dot H^{-1}}\|\chi_{\beta_1}^1\|_{\dot H^{-1}}\\
&\le\sum_{\beta_1}\int_{[0,T)^{m_j}}dt_1\cdots dt_{m_j}\|\psi_{\beta_1}^1\|_{\dot H^{1}}\|\chi_{\beta_1}^1\|_{\dot H^{1}}
\end{align*}
From here, we iterate the $\dot H^{1}$ bound \eqref{H^s induction} to obtain \eqref{regular bound d ge 2}.
\end{proof}
\end{prop}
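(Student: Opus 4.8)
The plan is to dismantle the regular tree $\tau_j$ one internal vertex at a time, applying the $\dot H^1$ bound \eqref{H^s induction} of Lemma \ref{induction} at each step, and then to read off the contribution of the $2m_j+1$ leaves of $\tau_j$. First I would reduce at the root. Recalling from the factorization that $J^1_j(t,t_1,\dots,t_{m_j};\sigma_j)=U^{(1)}(t-t_1)\Theta_1$, and that by the properties collected in Section \ref{key properties} one has $\Theta_1=\sum_{\beta_1}c_{\beta_1}^1\,\chi_{\beta_1}^1\otimes\overline{\psi_{\beta_1}^1}$ with at most $2^{m_j}$ nonzero summands, I would use that $R^{(1,-1)}=(-\Delta_x)^{-1/2}(-\Delta_{x'})^{-1/2}$ commutes with $U^{(1)}$ and that $e^{it\Delta}$ is an isometry on each $\dot H^s$ to obtain
\begin{align*}
\int_{[0,T)^{m_j}}\!dt_1\cdots dt_{m_j}\,{\rm Tr}\Big(\,\big|R^{(1,-1)}J^1_j\big|\,\Big)\le\sum_{\beta_1}\int_{[0,T)^{m_j}}\!dt_1\cdots dt_{m_j}\,\|\chi_{\beta_1}^1\|_{\dot H^{-1}}\|\psi_{\beta_1}^1\|_{\dot H^{-1}}.
\end{align*}
Since $\tau_j$ is regular, every leaf kernel is the smooth factor $\phi\otimes\bar\phi$ with $\phi\in H^1$, so the $\chi_{\beta_1}^1,\psi_{\beta_1}^1$ built by the recursion of Section \ref{key properties} are iterated products of free evolutions of $\phi$ and in particular lie in $\dot H^1$; I would therefore bound the $\dot H^{-1}$ norms above by $\dot H^1$ norms, reducing matters to estimating $\sum_{\beta_1}\int_{[0,T)^{m_j}}dt_1\cdots dt_{m_j}\,\|\chi_{\beta_1}^1\|_{\dot H^1}\|\psi_{\beta_1}^1\|_{\dot H^1}$.

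Next I would iterate Lemma \ref{induction}. Labelling the root's child vertex $\alpha=1$ with children $\kappa_-(1),\kappa(1),\kappa_+(1)$, the bound \eqref{H^s induction} applied to the sextic forms \eqref{form1}--\eqref{form2} performs the $t_1$ integral and factors what remains into the three subtree integrals over $\tau_{j,\kappa_-(1)}$, $\tau_{j,\kappa(1)}$, $\tau_{j,\kappa_+(1)}$ — legitimate because $\chi_{\beta_\alpha}^\alpha,\psi_{\beta_\alpha}^\alpha$ depend only on the time variables attached to $\tau_{j,\alpha}$ — at the cost of a factor $CT^{3\epsilon}\|V_\infty\|_{L^{\frac{1}{1-\epsilon}}}$, the child indices being determined by $\beta_1$. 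Iterating down each branch, each of the $m_j$ internal vertices costs one such factor, while a branch reaching a leaf $\alpha$ contributes $\|\chi_{\beta_\alpha}^\alpha\|_{\dot H^1}\|\psi_{\beta_\alpha}^\alpha\|_{\dot H^1}=\|\phi\|_{\dot H^1}^2$. Since $\tau_j$ has $m_j$ internal and $2m_j+1$ leaf vertices, after the $m_j$ applications of \eqref{H^s induction} the leaves produce $\|\phi\|_{\dot H^1}^{2(2m_j+1)}=\|\phi\|_{\dot H^1}^{4m_j+2}$, the internal vertices produce $(CT^{3\epsilon}\|V_\infty\|_{L^{\frac{1}{1-\epsilon}}})^{m_j}$, and the outer sum over $\beta_1$ contributes at most $2^{m_j}$; in the GP case $\|V_\infty\|_{L^1}=|\lambda|=1$, and in the Hartree case $\|V_\infty\|_{L^{\frac{1}{1-\epsilon}}}^{m_j}$ is absorbed into $C^{m_j}$. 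This is \eqref{regular bound d ge 2}.

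I expect the main work here to be bookkeeping rather than a new estimate: one must verify that the nested time integrals genuinely factor over the disjoint subtrees at every step, and that the leaf count $2m_j+1$, the per-vertex constant, and the $2^{m_j}$ bound on the number of factorized terms in $\Theta_1$ combine into precisely the exponents in \eqref{regular bound d ge 2}. The one analytic point that needs care is the reduction at the root, since a priori $\phi$ is controlled only in $H^1$ and not in $\dot H^{-1}$; this is where the regularity of $\tau_j$ enters — every leaf carrying $\phi\otimes\bar\phi$ forces each $\chi_{\beta_1}^1,\psi_{\beta_1}^1$ to be a product of free evolutions of $\phi$, so that the passage from the trace norm through $\dot H^{-1}$ to $\dot H^1$ norms is controlled by the multilinear and Sobolev estimates of Appendix \ref{sec: multilin estimates}.
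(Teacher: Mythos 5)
Your proposal follows the paper's proof of Proposition~\ref{H^1 integral} essentially step for step: write $J^1_j=U^{(1)}(t-t_1)\Theta_1$, expand $\Theta_1=\sum_{\beta_1}c^1_{\beta_1}\chi^1_{\beta_1}\overline{\psi^1_{\beta_1}}$ to bound the trace norm by $\sum_{\beta_1}\int\|\psi^1_{\beta_1}\|_{\dot H^{-1}}\|\chi^1_{\beta_1}\|_{\dot H^{-1}}$, pass to $\dot H^1\times\dot H^1$, and iterate the $\dot H^1$ bound \eqref{H^s induction} down the tree; your bookkeeping of $m_j$ internal vertices, $2m_j+1$ leaves, the factor $(CT^{3\epsilon}\|V_\infty\|)^{m_j}$, and the $2^{m_j}$ bound on the number of factorized terms reproduces exactly the exponents in \eqref{regular bound d ge 2}.

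One remark on the step you flagged as needing care. Your stated reason for replacing the $\dot H^{-1}$ norms by $\dot H^1$ norms is that for a regular tree every leaf carries $\phi\otimes\bar\phi$ and hence $\chi^1_{\beta_1},\psi^1_{\beta_1}$ lie in $\dot H^1$. That observation does not give $\|\cdot\|_{\dot H^{-1}}\le\|\cdot\|_{\dot H^1}$: the two homogeneous norms are not comparable, and when one of $\chi^1_{\beta_1},\psi^1_{\beta_1}$ is of the linear form \eqref{linear form} its $\dot H^{-1}$ norm collapses (by unitarity of the free evolution) to $\|\phi\|_{\dot H^{-1}}$, which is controlled neither by $\|\phi\|_{L^2}\le1$ from de Finetti nor by $\|\phi\|_{\dot H^1}$. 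The paper itself writes this same inequality between the third and fourth displayed lines of the proof with no justification, so this is a rough edge you inherited rather than introduced; but the reason you supplied is not a proof of it, and you should not present it as such.
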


\begin{lemma}\label{final bound}
Suppose that $V_\infty\in L^{\frac{1}{1-\epsilon}}$.  Then
\begin{align*}
\|A[V_\infty,|\phi|^2,|\phi|^2]\phi\|_{\dot H^{-1}}\lesssim
\begin{cases}
\|V_\infty\|_{L^1}\|\phi\|_{\dot H^1}^5, &\text{ if }\epsilon=0\\
\|V_\infty\|_{L^\frac{1}{1-\epsilon}}\|\phi\|_{H^1}^5, &\text{ if }\epsilon>0.
\end{cases}
\end{align*}
Notice that when $\epsilon>0$, we measure the norm of $\phi$ in the non-homogeneous Sobolev space $H^1$.
\begin{proof}
By Strichartz estimates, Sobolev embedding, and Theorem \ref{beckner theorem}, we have
\begin{align*}
&\|A[V_\infty,|\phi|^2,|\phi|^2]\phi\|_{\dot H^{-1}}\\
&\lesssim\|A[V_\infty,|\phi|^2,|\phi|^2]\phi\|_{L^{\frac{6}{5}}}\\
&\le\|A[V_\infty,|\phi|^2,|\phi|^2]\phi\|_{L^{\frac{3}{2}}}\|\phi\|_{L^6}\\
&\le\|V_\infty\|_{L^\frac{1}{1-\epsilon}}\||\phi|^2\|_{L^{\frac{3}{1+3\epsilon}}}^2\|\phi\|_{L^6}\\
&=\|V_\infty\|_{L^\frac{1}{1-\epsilon}}\|\phi\|_{L^{\frac{6}{1+3\epsilon}}}^4\|\phi\|_{L^6}\\
&\lesssim
\begin{cases}
\|V_\infty\|_{L^1}\|\phi\|_{\dot H^1}^5, &\text{ if }\epsilon=0\\
\|V_\infty\|_{L^\frac{1}{1-\epsilon}}\|\phi\|_{H^1}^5, &\text{ if }\epsilon>0.
\end{cases}\qedhere
\end{align*}
\end{proof}
\end{lemma}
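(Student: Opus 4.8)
The goal is to estimate the $\dot H^{-1}$ norm of the quintic term $A[V_\infty,|\phi|^2,|\phi|^2]\phi$, which arises at the distinguished leaf vertex, in terms of Sobolev norms of $\phi$. The plan is to trade the negative-order Sobolev norm for a Lebesgue norm via the Sobolev embedding $L^{6/5}(\mathbb{R}^3)\hookrightarrow \dot H^{-1}(\mathbb{R}^3)$ (the dual of $\dot H^1\hookrightarrow L^6$), and then to unfold the trilinear operator $A$ using H\"older's inequality and the convolution-type estimate of Beckner (Theorem \ref{beckner theorem}).

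First I would write $\|A[V_\infty,|\phi|^2,|\phi|^2]\phi\|_{\dot H^{-1}}\lesssim \|A[V_\infty,|\phi|^2,|\phi|^2]\phi\|_{L^{6/5}}$. Next, peel off one factor of $\phi$ by H\"older in the form $L^{6/5}=L^{3/2}\cdot L^6$, so that it remains to bound $\|A[V_\infty,|\phi|^2,|\phi|^2]\|_{L^{3/2}_x}$ and $\|\phi\|_{L^6}$. For the first factor, apply Beckner's inequality to the double-convolution structure $\int\int V_\infty(x-y_1,x-y_2)|\phi(y_1)|^2|\phi(y_2)|^2\,dy_1dy_2$: this yields $\|A[V_\infty,|\phi|^2,|\phi|^2]\|_{L^{3/2}}\lesssim \|V_\infty\|_{L^{1/(1-\epsilon)}}\,\||\phi|^2\|_{L^{3/(1+3\epsilon)}}^2$, where the exponent $3/(1+3\epsilon)$ is forced by the scaling/Young-type bookkeeping: when $\epsilon=0$ the potential is $L^1$ (including the delta-function case with $\|V_\infty\|_{L^1}=\lambda$, where $A$ reduces to pointwise multiplication by $|\phi|^4$ and the estimate is an ordinary H\"older inequality), and $\epsilon>0$ corresponds to the subcritical Hartree case. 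Then rewrite $\||\phi|^2\|_{L^{3/(1+3\epsilon)}}^2=\|\phi\|_{L^{6/(1+3\epsilon)}}^4$.

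Finally I would close the estimate with Sobolev embeddings. When $\epsilon=0$ all Lebesgue exponents appearing are exactly $6$, which is the endpoint of $\dot H^1(\mathbb{R}^3)\hookrightarrow L^6(\mathbb{R}^3)$, so everything is controlled by $\|\phi\|_{\dot H^1}^5$ times $\|V_\infty\|_{L^1}$. When $\epsilon>0$ the exponent $6/(1+3\epsilon)$ is strictly between $2$ and $6$, so $\|\phi\|_{L^{6/(1+3\epsilon)}}\lesssim \|\phi\|_{H^1}$ by interpolation between $L^2$ and $L^6$ (hence the passage to the inhomogeneous norm $H^1$ noted in the statement), and $\|\phi\|_{L^6}\lesssim \|\phi\|_{H^1}$; multiplying gives the $\|V_\infty\|_{L^{1/(1-\epsilon)}}\|\phi\|_{H^1}^5$ bound.

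The only genuinely non-routine input is the trilinear bound for $A$, i.e. controlling the double convolution against $V_\infty$ in $L^{3/2}$: this is exactly where Beckner's convolution estimate (stated as Theorem \ref{beckner theorem}, the analogue of the Hardy--Littlewood--Sobolev inequality adapted to the two-variable kernel $V_\infty(x-y_1,x-y_2)$) does the work, and getting the exponents to line up with $r=1/(1-\epsilon)$ on the potential side and with the endpoint Sobolev exponent $6$ on the $\phi$ side is the one place where care is needed; everything else is H\"older and Sobolev embedding. Note also that the $\epsilon=0$ case must be handled slightly separately since there $V_\infty$ is a distribution and $A$ degenerates, but there the claimed inequality is elementary.
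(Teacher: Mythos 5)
Your proof is correct and follows essentially the same route as the paper: Sobolev embedding $L^{6/5}\hookrightarrow\dot H^{-1}$, H\"older to split off one $\phi$ factor, Beckner's convolution estimate (Theorem~\ref{beckner theorem}) to control $\|A[V_\infty,|\phi|^2,|\phi|^2]\|_{L^{3/2}}$ by $\|V_\infty\|_{L^{1/(1-\epsilon)}}\|\phi\|_{L^{6/(1+3\epsilon)}}^4$, and a final Sobolev embedding (with $L^2$--$L^6$ interpolation when $\epsilon>0$) to reach the $\dot H^1$ or $H^1$ norms. Your remark that the $\epsilon=0$ delta case reduces to pointwise multiplication plus ordinary H\"older, and your observation that no Strichartz estimate is actually needed (the paper mentions Strichartz but does not use it here since there is no time integral), are both accurate.
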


We are now ready to conclude the proof of Proposition \ref{prop: zero trace norm}.

\begin{proof}[Proof of Proposition \ref{prop: zero trace norm}.]
Recall from \eqref{J decomposition} that $J^k$ can be decomposed into a product of $k$ one-particle kernels
\begin{align*}
J^k(t,t_1,\dots,t_n;\sigma)=\prod_{j=1}^k J^1_j(t,t_{\ell_{j,1}},\dots,t_{\ell_{j,m_j}};\sigma_j),
\end{align*}
where only one of the factors $J_j^1$ distinguished.  It now follows from Propositions \ref{H^{-1} integral} and \ref{H^1 integral} that
\begin{align*}
&\int_{[0,T)^{n-1}}dt_1\cdots dt_{n-1}{\rm Tr}\bigg(\bigg|R^{(k,-1)}J^k(t,t_1,\dots,t_n;\sigma)\bigg|\bigg)\\
&=\int_{[0,T)^{n-1}}dt_1\cdots dt_{n-1}\prod_{j=1}^k{\rm Tr}\bigg(\bigg|R^{(1,-1)}J^1_j(t,t_{\ell_{j,1}},\dots,t_{\ell_{j,m_j}};\sigma_j)\bigg|\bigg)\\
&\le
2^nC^{n-1}T^{3\epsilon(n-1)}\|V_\infty\|_{L^\frac{1}{1-\epsilon}}^{n-1}\|\phi\|_{\dot H^{1}}^{4(k+n)-5}\|A[V_\infty,|\phi|^2,|\phi|^2]\phi\|_{\dot H^{-1}}.
\end{align*}
Thus, by Lemma \ref{final bound}, the difference between two solutions $\gamma:=\gamma_1-\gamma_2$ satisfies
\begin{align*}
&{\rm Tr}|R^{(k,-1)}\gamma^{(k)}|\\
&\le (\#\mathcal{M}_{k,n})\sup_{\sigma\in\mathcal{M}_{k,n}}\sup_{i=1,2}\int_{[0,T)^n} d\underline{t}_n \int d\mu^{(i)}_{t_n}(\phi) \textup{Tr}(|R^{(k,-1)}J^k(\underline{t}_n;\sigma)|)\\
&\le
\bigg(CT^{3\epsilon}\|V_\infty\|_{L^\frac{1}{1-\epsilon}}\bigg)^{n-1}\int_0^T dt_n \int d\mu_{t_n}^{(i)}(\phi) \|\phi\|_{\dot H^{1}}^{4(k+n)-5}\|A[V_\infty,|\phi|^2,|\phi|^2]\phi\|_{\dot H^{-1}}\\
&\le
\begin{cases}
\bigg(C\|V_\infty\|_{L^1}\bigg)^n \int_0^T dt_n \int d\mu_{t_n}^{(i)}(\phi) \|\phi\|_{\dot H^{1}}^{4(k+n)},&\text{ if }\epsilon=0\\
\bigg(CT^{3\epsilon}\|V_\infty\|_{L^\frac{1}{1-\epsilon}}\bigg)^{n-1}\|V_\infty\|_{L^\frac{1}{1-\epsilon}}\int_0^T dt_n \int d\mu_{t_n}^{(i)}(\phi) \|\phi\|_{H^{1}}^{4(k+n)},&\text{ if }\epsilon>0
\end{cases}\\
&\le
\begin{cases}
\bigg(C\|V_\infty\|_{L^1}\bigg)^n TM^{4(k+n)},&\text{ if }\epsilon=0\\
\bigg(CT^{3\epsilon}\|V_\infty\|_{L^\frac{1}{1-\epsilon}}\bigg)^{n-1}\|V_\infty\|_{L^\frac{1}{1-\epsilon}}TM^{4(k+n)},&\text{ if }\epsilon>0
\end{cases}\\
&\rightarrow 0\text{ as }n\rightarrow\infty
\end{align*}
for $T$ sufficiently small if $\epsilon>0$, and for $M$ sufficiently small if $\epsilon=0$.  Thus ${\rm Tr}|R^{(k,-1)}\gamma^{(k)}|=0$.  Combining this with the a-priori bound
\begin{align*}
\begin{cases}
{\rm Tr}|R^{(k,1)}\gamma^{(k)}|<M^{2k},&\text{ if }\epsilon=0\\
{\rm Tr}|S^{(k,1)}\gamma^{(k)}|<M^{2k},&\text{ if }\epsilon>0
\end{cases}
\end{align*}
yields the desired result.  Namely,
\begin{align*}
\begin{cases}
{\rm Tr}|R^{(k,1)}\gamma^{(k)}|=0,&\text{ if }\epsilon=0\\
{\rm Tr}|S^{(k,1)}\gamma^{(k)}|=0,&\text{ if }\epsilon>0.
\end{cases}&\qedhere
\end{align*}
\end{proof}

\appendix

\section{Multilinear Estimates}   \label{sec: multilin estimates}

In this section, we present the key multilinear estimates that we will use to prove our main theorems. For the GP hierarchy, our key estimates are in Proposition \ref{prop:multilinear gp}.  The key estimates for the Hartree hierarchy are in Propositions \ref{prop:multilinear hartree}.

\begin{prop}[Multilinear estimates for GP]\label{prop:multilinear gp}
\begin{align}
\|(e^{it\Delta}f_1)(e^{it\Delta}f_2)(e^{it\Delta}f_3)(e^{it\Delta}f_4)(e^{it\Delta}f_5)\|_{L^1_t\dot H^{-1}_x}&\lesssim\|f_1\|_{\dot H^{-1}}\prod_{j=2}^5\|f_j\|_{\dot H^1}\label{inequality:multilinear gp H^{-1}},\\
\|(e^{it\Delta}f_1)(e^{it\Delta}f_2)(e^{it\Delta}f_3)(e^{it\Delta}f_4)(e^{it\Delta}f_5)\|_{L^1_t\dot H^{1}_x}&\lesssim\prod_{j=1}^5\|f_j\|_{\dot H^1}.\label{inequality:multilinear gp H^1}
\end{align}

For the proof, we need
\begin{lemma}[Negative Sobolev norm estimate]\label{split}
$$\|fg\|_{\dot H^{-1}}\lesssim\|f\|_{\dot W^{-1,6}}\|g\|_{\dot W^{1,\frac{3}{2}}}.$$
\end{lemma}

\begin{proof}
We prove the lemma by the standard duality argument, the product rule and the Sobolev inequality.
\begin{align*}
\int fg\overline{h}\,dx
&\le\|f\|_{\dot W^{-1,6}}\|gh\|_{\dot W^{1,\frac{6}{5}}}\\
&\lesssim\|f\|_{\dot W^{-1,6}}\bigg(\|g\|_{L^3}\|h\|_{\dot H^1}+\|g\|_{\dot W^{1,\frac{3}{2}}}\|h\|_{L^6}\bigg)\\
&\lesssim\|f\|_{\dot W^{-1,6}}\|g\|_{\dot W^{1,\frac{3}{2}}}\|h\|_{H^1}.
\end{align*}
\end{proof}

\begin{proof}
By Lemma \ref{split}, Sobolev embedding and Strichartz estimates, we prove that
\begin{align*}
&\|(e^{it\Delta}f_1)(e^{it\Delta}f_2)(e^{it\Delta}f_3)(e^{it\Delta}f_4)(e^{it\Delta}f_5)\|_{L^1_t\dot H^{-1}_x}\\
&\lesssim\|e^{it\Delta}f_1\|_{L^2_tW^{-1,6}_x}\bigg\|\prod_{j=2}^5e^{it\Delta}f_j\bigg\|_{L^2_t\dot W^{1,\frac{3}{2}}_x}\\
&\lesssim\|f_1\|_{\dot H^{-1}}\bigg(\|e^{it\Delta}f_2\|_{L^2_t\dot W^{1,6}_x}\prod_{j=3}^5\|e^{it\Delta}f_j\|_{L^\infty_tL^6_x}+\text{three similar terms (by the product rule)}\bigg)\\
&\lesssim\|f_1\|_{\dot H^{-1}}\prod_{j=2}^5\|f_j\|_{\dot H^1}
\end{align*}
and
\begin{align*}
&\|(e^{it\Delta}f_1)(e^{it\Delta}f_2)(e^{it\Delta}f_3)(e^{it\Delta}f_4)(e^{it\Delta}f_5)\|_{L^1_t\dot H^1_x}\\
&\lesssim\|e^{it\Delta}f_1\|_{L^2_t\dot W^{1,6}_x}\prod_{j=2}^5\|e^{it\Delta}f_j\|_{L^8_tL^{12}_x}+\text{ four similar terms (by the product rule)}\\
&\lesssim\|e^{it\Delta}f_1\|_{L^2_t\dot W^{1,6}_x}\prod_{j=2}^5\|e^{it\Delta}f_j\|_{L^8_t\dot W^{1,\frac{12}{5}}_x}+\text{ four similar terms (by the product rule)}\\
&\lesssim\prod_{j=1}^5\|f_j\|_{\dot H^1}.
\end{align*}
\end{proof}
\end{prop}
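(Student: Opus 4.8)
The plan is to reduce both inequalities to Strichartz estimates for the three-dimensional free Schr\"odinger evolution, using the fractional Leibniz (product) rule to distribute derivatives and, for the $\dot H^{-1}$ bound, the negative-order splitting in Lemma \ref{split}. Write $u_j:=e^{it\Delta}f_j$. Recall that in dimension three a pair $(q,r)$ is Strichartz-admissible when $\tfrac2q+\tfrac3r=\tfrac32$ with $q\ge 2$; I will only need the pairs $(2,6)$ and $(8,\tfrac{12}{5})$, both of which qualify, together with the conservation identity $\|u_j\|_{L^\infty_t\dot H^1_x}=\|f_j\|_{\dot H^1}$ and the Sobolev embeddings $\dot H^1\hookrightarrow L^6$, $\dot W^{1,12/5}\hookrightarrow L^{12}$, $\dot W^{1,3/2}\hookrightarrow L^3$, all in $\mathbb R^3$.

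For \eqref{inequality:multilinear gp H^1}, I would first apply the fractional Leibniz rule to bound $\|u_1u_2u_3u_4u_5\|_{\dot H^1_x}$ by $\sum_{i=1}^5\|(\nabla u_i)\prod_{j\ne i}u_j\|_{L^2_x}$, then split each summand by H\"older in space as $L^6_x\cdot(L^{12}_x)^4$ (the exponents add to $\tfrac16+\tfrac4{12}=\tfrac12$), and replace each $L^{12}_x$ norm by a $\dot W^{1,12/5}_x$ norm via Sobolev embedding. Taking the $L^1_t$ norm and applying H\"older in time with exponents $(2,8,8,8,8)$ (summing to $1$), each differentiated factor is estimated by the $(2,6)$ Strichartz bound $\|\nabla u_i\|_{L^2_t L^6_x}\lesssim\|f_i\|_{\dot H^1}$ and each remaining factor by the $(8,\tfrac{12}{5})$ Strichartz bound $\|u_j\|_{L^8_t\dot W^{1,12/5}_x}\lesssim\|f_j\|_{\dot H^1}$, which yields $\prod_{j=1}^5\|f_j\|_{\dot H^1}$.

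For \eqref{inequality:multilinear gp H^{-1}}, the idea is to isolate the single low-regularity factor. Applying Lemma \ref{split} with $f=u_1$ and $g=u_2u_3u_4u_5$ gives $\|u_1\cdots u_5\|_{\dot H^{-1}_x}\lesssim\|u_1\|_{\dot W^{-1,6}_x}\|u_2u_3u_4u_5\|_{\dot W^{1,3/2}_x}$, and then $\|u_1\cdots u_5\|_{L^1_t\dot H^{-1}_x}\le\|u_1\|_{L^2_t\dot W^{-1,6}_x}\|u_2u_3u_4u_5\|_{L^2_t\dot W^{1,3/2}_x}$ by H\"older in time. Since $|\nabla|^{-1}$ commutes with $e^{it\Delta}$, the first factor equals $\|e^{it\Delta}(|\nabla|^{-1}f_1)\|_{L^2_tL^6_x}\lesssim\|f_1\|_{\dot H^{-1}}$ by the $(2,6)$ estimate. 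For the second factor I would again use the fractional Leibniz rule, H\"older in space as $L^6_x\cdot(L^6_x)^3$ (exponents summing to $\tfrac23=\tfrac1{3/2}$), and H\"older in time with exponents $(2,\infty,\infty,\infty)$: the differentiated factor is bounded by the $(2,6)$ Strichartz estimate and the three undifferentiated ones by the conservation law together with $\dot H^1\hookrightarrow L^6$. Multiplying gives $\|f_1\|_{\dot H^{-1}}\prod_{j=2}^5\|f_j\|_{\dot H^1}$.

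The exponent counting above is routine. The one point I expect to require care is Lemma \ref{split} itself: after the duality pairing $\langle fg,h\rangle\le\|f\|_{\dot W^{-1,6}}\|gh\|_{\dot W^{1,6/5}}$ and the product rule, the factor $h$ is naturally controlled by $\|h\|_{H^1}$ (via $\dot W^{1,3/2}\hookrightarrow L^3$ for the term carrying the derivative on $g$, and $\dot H^1\hookrightarrow L^6$ for the term carrying it on $h$), whereas the definition of $\|fg\|_{\dot H^{-1}}$ only supplies $\|h\|_{\dot H^1}$. One must check that the homogeneities match up so that the bound can indeed be closed with the homogeneous norm of $h$; this is where the scaling-critical structure is being exploited, and it is the only step where a naive exponent count does not immediately suffice.
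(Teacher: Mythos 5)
Your proposal is correct and matches the paper's argument essentially line for line: for \eqref{inequality:multilinear gp H^1} the product rule, the H\"older split $L^6_x\cdot(L^{12}_x)^4$, the embedding $\dot W^{1,12/5}\hookrightarrow L^{12}$, and the $(2,6)$ and $(8,12/5)$ Strichartz pairs; for \eqref{inequality:multilinear gp H^{-1}} the splitting via Lemma~\ref{split}, H\"older in time $L^2_t\cdot L^2_t$, the $(2,6)$ estimate for $\|e^{it\Delta}f_1\|_{L^2_t\dot W^{-1,6}_x}$, and the product rule with $L^6_x\cdot(L^6_x)^3$ and time exponents $(2,\infty,\infty,\infty)$ for the quartic factor.

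On the point you flag at the end: the concern resolves because both terms in the product-rule expansion of $\|gh\|_{\dot W^{1,6/5}}$ are controlled by the \emph{homogeneous} norm $\|h\|_{\dot H^1}$ alone — the first term carries $\|h\|_{\dot H^1}$ explicitly, and the second carries $\|h\|_{L^6}\lesssim\|h\|_{\dot H^1}$ by the critical Sobolev embedding $\dot H^1(\mathbb R^3)\hookrightarrow L^6(\mathbb R^3)$. Every embedding invoked in the lemma ($\dot W^{1,3/2}\hookrightarrow L^3$, $\dot H^1\hookrightarrow L^6$, and the duality $(\dot W^{1,6/5})^*=\dot W^{-1,6}$) is scaling-exact, so the naive exponent count does in fact suffice and the duality closes with $\dot H^1$. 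The paper's final line in Lemma~\ref{split} writes $\|h\|_{H^1}$, but this should be read as $\|h\|_{\dot H^1}$; the intermediate estimates make that plain.
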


Recall the definition of the the trilinear operator $A$ in \eqref{definition of A}
\begin{equation*}  
A[V_\infty,f,g](x):=\int\int V_\infty(x-y_1,x-y_2)f(y_1)g(y_2)\,dy_1\,dy_2.
\end{equation*}

As an analogue of Proposition \ref{prop:multilinear gp}, we prove:
\begin{prop}[Multilinear estimates for Hartree]\label{prop:multilinear hartree}
Let $\epsilon\geq0$. Then, we have
\begin{equation}
\begin{aligned}
&\|A[V_\infty,(e^{it\Delta}f_1e^{it\Delta}f_2),(e^{it\Delta}f_3e^{it\Delta}f_4)]\cdot(e^{it\Delta}f_5)\|_{L^1_t\dot H^{-1}_x}\\
&\lesssim T^{3\epsilon}\|V_\infty\|_{L^{\frac{1}{1-\epsilon}}}\|f_m\|_{\dot H^{-1}}\prod_{\substack{\ell=1\\\ell\neq m}}^5\|f_\ell\|_{\dot H^1}\label{inequality:multilinear hartree H^{-1}},\quad\forall m=1,\cdots,5,
\end{aligned}
\end{equation}
and
\begin{equation}
\|A[V_\infty,(e^{it\Delta}f_1e^{it\Delta}f_2),(e^{it\Delta}f_3e^{it\Delta}f_4)]\cdot(e^{it\Delta}f_5)\|_{L^1_t\dot H^{1}_x}\lesssim T^{3\epsilon}\|V_\infty\|_{L^{\frac{1}{1-\epsilon}}}\prod_{\ell=1}^5\|f_\ell\|_{\dot H^1}. \label{inequality:multilinear hartree H^1}
\end{equation}
\end{prop}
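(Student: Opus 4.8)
The plan is to transcribe the proof of Proposition~\ref{prop:multilinear gp}, with pointwise products replaced by the operator $A[V_\infty,\cdot,\cdot]$ and with physical-space Hölder replaced by Beckner's convolution inequality (Theorem~\ref{beckner theorem}), of the schematic form $\|A[V_\infty,F,G]\|_{L^p_x}\lesssim\|V_\infty\|_{L^{1/(1-\epsilon)}}\|F\|_{L^{q_1}_x}\|G\|_{L^{q_2}_x}$ under the scaling relation $2+\tfrac1p=2(1-\epsilon)+\tfrac1{q_1}+\tfrac1{q_2}$. When $\epsilon=0$ one has $V_\infty(y,z)=\lambda\delta_0(y)\delta_0(z)$, so the left side of both inequalities is $\lambda\prod_{\ell=1}^5 e^{it\Delta}f_\ell$ and the claims reduce to Proposition~\ref{prop:multilinear gp} (whose right side is symmetric in the five factors, so any $m$ is admitted) with $T^{3\epsilon}=1$. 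So assume $\epsilon>0$; the point is that the integrability of $V_\infty$ in excess of the $\delta$-kernel is converted into the loss $T^{3\epsilon}$.

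Two structural facts are used throughout. First, $e^{it\Delta}$ is an isometry of $\dot H^\sigma$ for every $\sigma$, so $\|e^{it\Delta}f\|_{\dot H^\sigma}=\|f\|_{\dot H^\sigma}$ is independent of $t$; in particular a factor measured in $\dot H^{-1}$ leaves a $t$-integral as the constant $\|f_m\|_{\dot H^{-1}}$. Second, differentiating the kernel $V_\infty(x-y_1,x-y_2)$ in the output variable and integrating by parts yields the Leibniz identity $\nabla A[V_\infty,F,G]=A[V_\infty,\nabla F,G]+A[V_\infty,F,\nabla G]$, so together with the product rule any derivative landing on $A[V_\infty,u_1u_2,u_3u_4]$ can be parked on one of the factors $u_\ell:=e^{it\Delta}f_\ell$. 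Both facts persist for the ``adjoint'' trilinear operators obtained by designating either integration variable as the output, for instance $A^\sharp[V_\infty;G,H](y):=\int\int V_\infty(x-y,x-z)G(z)H(x)\,dz\,dx$: a unimodular linear change of variables identifies each of them with $A[\,\cdot\,,\cdot\,]$ applied to a rearrangement of $V_\infty$ with the same $L^{1/(1-\epsilon)}$ norm, so they obey the same Beckner bound.

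The heart of the argument is a single $\dot H^1$-type estimate: for functions $g_0,\dots,g_4$ of space and time, four of which have the form $e^{it\Delta}a_i$ and the fifth of which is allowed to lie merely in $L^\infty_t\dot H^1_x$,
\[
\int_0^T\big\|\,g_0\cdot\mathcal A[V_\infty;g_1g_2,g_3g_4]\,\big\|_{\dot H^1_x}\,dt\;\lesssim\;T^{3\epsilon}\,\|V_\infty\|_{L^{1/(1-\epsilon)}}\prod_{i=0}^4\|g_i\|_{X_i},
\]
where $\mathcal A$ is $A$ or one of its adjoint variants, $X_i=\dot H^1$ for the $i$ with $g_i=e^{it\Delta}a_i$ and $X_i=L^\infty_t\dot H^1_x$ for the remaining one. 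I would prove this by expanding $\|g_0\mathcal A[\cdots]\|_{\dot H^1}$ by the product rule into a term with the derivative on $g_0$ and a term with the derivative on $\mathcal A[\cdots]$, expanding the latter via the Leibniz identity for $\mathcal A$ onto one of $g_1,\dots,g_4$, applying Beckner's inequality to the $\mathcal A$-factor to extract $\|V_\infty\|_{L^{1/(1-\epsilon)}}$, splitting the two surviving pairwise products by Hölder in $x$, and estimating each single factor by a Strichartz estimate --- the one carrying the derivative through the endpoint pair $(2,6)$ (so that $\|\nabla e^{it\Delta}a\|_{L^2_tL^6_x}\lesssim\|a\|_{\dot H^1}$), the others through Sobolev embedding into admissible pairs exactly as in Proposition~\ref{prop:multilinear gp} (using $\dot W^{1,12/5}(\mathbb{R}^3)\hookrightarrow L^{12}(\mathbb{R}^3)$ whenever an exponent larger than $6$ is forced), and the $L^\infty_t\dot H^1_x$ factor through $\dot H^1\hookrightarrow L^6$. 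The scaling relation in Beckner's inequality fixes the sum of the spatial exponents; this forces the sum of the reciprocal Strichartz time-exponents to be $1-3\epsilon$ rather than $1$, and Hölder on $[0,T)$ turns the deficit into $T^{3\epsilon}$. Granting this, \eqref{inequality:multilinear hartree H^1} is the case $\mathcal A=A$ with all $g_i=e^{it\Delta}f_i$. For \eqref{inequality:multilinear hartree H^{-1}} with $m\in\{1,2,3,4\}$ --- which by the symmetries $1\!\leftrightarrow\!2$ and $3\!\leftrightarrow\!4$ of $A$ comprises all four --- I would pass to the dual: $\|A[V_\infty,u_1u_2,u_3u_4]u_5\|_{L^1_t\dot H^{-1}_x}=\sup|\int_0^T\!\!\int_x A[V_\infty,u_1u_2,u_3u_4]u_5\,\bar h\,dx\,dt|$ over $\|h\|_{L^\infty_t\dot H^1_x}\le1$, reorganize the $x,y_1,y_2$-integral so that $u_m$ is isolated, $\int_x A[\cdots]u_5\bar h = \int u_m(y)\,\Phi_m(y)\,dy$ with $\Phi_m=u_{b}\cdot\mathcal A[V_\infty;u_{c}u_{d},u_5\bar h]$ ($b$ the partner of $m$, $\{c,d\}$ the other pair, and $\mathcal A$ the appropriate adjoint of $A$), estimate $|\int u_m\Phi_m|\le\|u_m\|_{\dot H^{-1}}\|\Phi_m\|_{\dot H^1}=\|f_m\|_{\dot H^{-1}}\|\Phi_m\|_{\dot H^1}$ at each time, and apply the $\dot H^1$-type estimate above with $g_0=u_b$, $\{g_1,\dots,g_4\}=\{u_c,u_d,u_5,\bar h\}$, the supremum removing the $\|h\|$. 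Finally, \eqref{inequality:multilinear hartree H^{-1}} with $m=5$ is handled directly by Lemma~\ref{split}: $\|A[\cdots]u_5\|_{\dot H^{-1}}\lesssim\|u_5\|_{\dot W^{-1,6}}\|A[\cdots]\|_{\dot W^{1,3/2}}$, with $\|u_5\|_{L^2_t\dot W^{-1,6}_x}\lesssim\|f_5\|_{\dot H^{-1}}$ by the $(2,6)$-Strichartz estimate and $\|A[\cdots]\|_{\dot W^{1,3/2}}$ handled by the derivative transfer and Beckner at $p=3/2$, mirroring Proposition~\ref{prop:multilinear gp}.

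The main obstacle I anticipate is not a single inequality but the exponent bookkeeping along the chain Beckner $\to$ Hölder $\to$ Sobolev $\to$ Strichartz: one must verify that the (unavoidable) appearance of Lebesgue exponents above $6$ for the $\dot H^1$-regular factors is always absorbed by a Sobolev step into an admissible Strichartz pair, that the derivative produced by the $\dot H^1$-norm can always be parked on a factor whose gradient then fits the $(2,6)$ endpoint, and --- most delicately --- that the reciprocal time-exponents total exactly $1-3\epsilon$, so that the loss over $[0,T)$ is precisely $T^{3\epsilon}$ and not worse. Since all these quantities are continuous in $\epsilon$ and the $\epsilon=0$ configuration is exactly the admissible one appearing in Proposition~\ref{prop:multilinear gp}, the verification is perturbative; but this is where the care lies.
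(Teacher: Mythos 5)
Your plan for the $\dot H^1$ bound \eqref{inequality:multilinear hartree H^1} matches the paper's proof (Leibniz on one of the five factors, Beckner, Strichartz, Sobolev), and your proposal for \eqref{inequality:multilinear hartree H^{-1}} when $m=5$ via Lemma~\ref{split} is a sound variant of the paper's argument. The issue is the case $m\in\{1,2,3,4\}$, where the paper and you diverge, and where your route has a genuine gap.

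Your idea for $m\neq 5$ is to peel off $\|f_m\|_{\dot H^{-1}}$ at each fixed $t$ by the pointwise duality $|\int u_m\Phi_m|\le\|u_m\|_{\dot H^{-1}}\|\Phi_m\|_{\dot H^1}$ and then control $\int_0^T\|\Phi_m\|_{\dot H^1}\,dt$ with a variant of the $\dot H^1$ multilinear estimate in which one input is the test function $h\in L^\infty_t\dot H^1_x$ rather than a free wave. That variant fails precisely in the subterm where the derivative from $\|\Phi_m\|_{\dot H^1}$ lands on $\bar h$. Since all you know about $h$ is $\nabla h\in L^2_x$, the Beckner pair attached to $u_5\nabla\bar h$ is forced to satisfy $1/q_2=1/a_5+1/2\ge 1/2$. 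Combining this with the Beckner scaling $1/q_1+1/q_2=1/p_1+2\epsilon$ and the outer Hölder $1/p_0+1/p_1=1/2$ gives $1/q_1\le 2\epsilon$, i.e.\ $q_1\ge 1/(2\epsilon)$. But Theorem~\ref{beckner theorem} requires $q_1<p'/q'$, and with $p=1/(1-\epsilon)$, $q=p_1\ge 2$ one has $q'\ge 2$ (at $p_1=2$, $q'=2$), hence $p'/q'\le 1/(2\epsilon)$. So $q_1\ge p'/q'$, violating the hypothesis; pushing $p_1$ above $2$ to relax $q'$ only trades a small gain against the loss from $1/p_0=1/2-1/p_1>0$, and one finds the constraint can be met only for $\epsilon>1/4$, outside the regime of interest. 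There is also a second, independent obstruction at the other extreme: sending $a_5\to\infty$ to free up $q_1$ requires $e^{it\Delta}f_5\in L^\infty_x$ via $\dot W^{1,3}\hookrightarrow L^\infty$, which is the false endpoint. So this branch of your $\dot H^1$-type estimate does not close, and your ``the $\epsilon=0$ configuration is admissible, so the verification is perturbative'' reassurance does not apply: the $\epsilon=0$ proof of \eqref{inequality:multilinear gp H^{-1}} does not use the per-time duality at all (it uses Lemma~\ref{split}), so there is no admissible $\epsilon=0$ configuration from which to perturb.

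The paper avoids this by not discarding the dispersive structure of $u_m$. Writing $u_m=\sum_{j=1}^3\partial_j(\partial_j^{-1}P_ju_m)$ with the conic decomposition $\{P_j\}$ and integrating by parts transfers the derivative off $u_m$ while keeping the factor $\partial_j^{-1}P_je^{it\Delta}f_m$ in the Strichartz chain, with $\|\partial_j^{-1}P_je^{it\Delta}f_m\|_{L^{2/(1-6\epsilon)}_tL^{6/(1+12\epsilon)}_x}\lesssim\|f_m\|_{\dot H^{-1}}$. This leaves five dispersive factors (plus the stationary $\nabla h$), so the Beckner estimate can be taken at the level $q=3$, $q'=3/2$, where $p'/q'=2/(3\epsilon)$ and the exponents $s_1\approx 9/2$, $s_2=9$ satisfy the hypotheses for $\epsilon$ small. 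If you want to retain a duality-based formulation, you must insert the conic Fourier projection and integration by parts before invoking duality, which is exactly what the paper does.
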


We recall the convolution estimates in Beckner \cite{beckner}.
\begin{thm}\label{beckner theorem}
For $1<p<q<\infty, 1<s_k<p'/q', k=1,2$ and $1/q+2/p'=\sum 1/s_k, 2<p'/q'$,
\begin{align}
\|A[V_\infty,f,g]\|_{L^q(\mathbb{R}^d)}\le \|V_\infty\|_{L^p(\mathbb{R}^{2d})}\|f\|_{L^{s_1}(\mathbb{R}^d)}\|g\|_{L^{s_2}(\mathbb{R}^d)}.\label{beckner theorem equation}
\end{align}
\end{thm}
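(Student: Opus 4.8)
The plan is to prove \eqref{beckner theorem equation} by duality: reduce the $L^q$ bound to a trilinear form estimate, establish three endpoint estimates by H\"older's inequality, and fill in the stated range by multilinear complex interpolation. First I would pair $A[V_\infty,f,g]$ against $h\in L^{q'}(\mathbb{R}^d)$ to write
\begin{align*}
\langle A[V_\infty,f,g],h\rangle=\int_{\mathbb{R}^{3d}}V_\infty(x-y_1,x-y_2)f(y_1)g(y_2)\overline{h(x)}\,dx\,dy_1\,dy_2=:T(f,g,h),
\end{align*}
so that $\|A[V_\infty,f,g]\|_{L^q}=\sup_{\|h\|_{L^{q'}}\le1}|T(f,g,h)|$; it then suffices to bound $T$ as a trilinear form on $L^{s_1}\times L^{s_2}\times L^{q'}$ with norm at most $\|V_\infty\|_{L^p}$.

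Next I would record three endpoint estimates. In each one, two of $f,g,h$ sit in $L^{p'}(\mathbb{R}^d)$, the third sits in $L^1(\mathbb{R}^d)$, and $V_\infty$ is paired against the remaining product by H\"older's inequality with exponents $(p,p')$ on $\mathbb{R}^{2d}$. The estimates are: $|T(f,g,h)|\le\|V_\infty\|_{L^p}\|f\|_{L^{p'}}\|g\|_{L^{p'}}\|h\|_{L^1}$, obtained by taking $\sup_x$ and applying H\"older in $(y_1,y_2)$ (using that $(y_1,y_2)\mapsto V_\infty(x-y_1,x-y_2)$ has $L^p$-norm $\|V_\infty\|_{L^p}$); $|T(f,g,h)|\le\|V_\infty\|_{L^p}\|f\|_{L^1}\|g\|_{L^{p'}}\|h\|_{L^{p'}}$, obtained after the substitution $(a,b)=(x-y_1,x-y_2)$ for each fixed $y_1$ followed by H\"older in $(a,b)$ and Fubini in the surviving variables; and the symmetric estimate $|T(f,g,h)|\le\|V_\infty\|_{L^p}\|f\|_{L^{p'}}\|g\|_{L^1}\|h\|_{L^{p'}}$ with the roles of $y_1$ and $y_2$ interchanged. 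In reciprocal-exponent coordinates $(1/s_1,1/s_2,1/q')$ these are the three points $P_1=(1/p',1/p',1)$, $P_2=(1,1/p',1/p')$, $P_3=(1/p',1,1/p')$, each of which satisfies the scaling identity $1/q+2/p'=1/s_1+1/s_2$.

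Finally I would invoke multilinear complex interpolation (the Riesz--Thorin/Stein method for trilinear forms) among $P_1,P_2,P_3$: since all three endpoint constants equal $\|V_\infty\|_{L^p}$, every exponent triple in the open convex hull of $P_1,P_2,P_3$ inherits $|T(f,g,h)|\le\|V_\infty\|_{L^p}\|f\|_{L^{s_1}}\|g\|_{L^{s_2}}\|h\|_{L^{q'}}$, and undoing the duality yields \eqref{beckner theorem equation}. A short bookkeeping computation with conjugate exponents shows that this open hull, which lies in the scaling plane $1/q+2/p'=1/s_1+1/s_2$, is exactly the region $\{1<s_1<p',\ 1<s_2<p',\ q>p\}$ on that plane, and this strictly contains the range $1<p<q<\infty$, $1<s_k<p'/q'$, $2<p'/q'$ of the hypotheses (note $p'/q'<p'$ since $q'>1$). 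The main obstacle, and the only step requiring real care, is the change of variables in the two endpoint estimates that have a function in $L^1$: one must substitute so that H\"older is applied against the full $\mathbb{R}^{2d}$-norm of $V_\infty$, since a premature supremum would only see a lower-dimensional restriction of $V_\infty$, which is not controlled by $\|V_\infty\|_{L^p(\mathbb{R}^{2d})}$. Once the three endpoints are in place, setting up the trilinear interpolation in three simultaneously varying exponents is routine.
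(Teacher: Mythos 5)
Your proposal is correct, but it takes a genuinely different route from the paper, because the paper does not prove Theorem \ref{beckner theorem} at all: the estimate is quoted from Beckner \cite{beckner}, and the only computation the paper supplies is the remark extending the bound to $p=1$ (change of variables $(x-y,x-z)\to(y,z)$, Minkowski, then H\"older), which is the case actually needed for the GP hierarchy. Your argument --- dualize to the trilinear form $T(f,g,h)$, establish the three H\"older endpoints $(1/p',1/p',1)$, $(1,1/p',1/p')$, $(1/p',1,1/p')$, and fill in the triangle by multilinear complex interpolation --- checks out. In the two endpoints with an $L^1$ factor, the shear $(x,y_2)\mapsto(x-y_1,x-y_2)$ (for fixed $y_1$) has Jacobian one, so H\"older is applied against the full $L^p(\mathbb{R}^{2d})$ norm of $V_\infty$; your warning about a premature supremum is exactly the right point of care, and this computation is in fact the same one the paper performs in its $p=1$ remark. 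The convex-hull bookkeeping is also right: the open hull is $\{1<s_1<p',\ 1<s_2<p',\ p<q<\infty\}$ on the plane $1/s_1+1/s_2+1/q'=1+2/p'$, which contains Beckner's stated range since $s_k<p'/q'<p'$. What your route buys is a short, self-contained, elementary proof with an explicit constant (all three endpoint constants equal $\|V_\infty\|_{L^p}$, hence so does the interpolated one) on a strictly larger exponent region, and it subsumes the paper's $p=1$ observation as the degenerate case $p'=\infty$ where the endpoints become the Young-type vertices; what the citation buys is brevity. The one step you leave implicit, the trilinear Riesz--Thorin theorem for forms, is standard and can be obtained by iterating two-point multilinear complex interpolation (first along the edge $P_1P_2$, then toward $P_3$), so calling it routine is fair.
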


We note that Theorem \ref{beckner theorem} also holds for $p=1$.  Indeed, by the change of variables $(x-y,x-z)\rightarrow(y,z)$, Minkowski's inequality, and H\"older's inequality, we have
\begin{align*}
\|A[V_\infty,f,g]\|_{L^{q}}
&=\bigg\|\int\int V_\infty(y,z)f(x-y)g(x-z)\,dy\,dz\bigg\|_{L^{q}_x}\\
&\le\int\int |V_\infty(y,z)|\,\|f(x-y)g(x-z)\|_{L^{q}_x}\,dy\,dz\\
&\le\int\int |V_\infty(y,z)|\,\|f(x-y)\|_{L^{s_1}_x}\|g(x-z)\|_{L^{s_2}_x}\,dy\,dz\\
&=\|V_\infty\|_{L^1}\|f\|_{L^{s_1}}\|g\|_{L^{s_2}}.
\end{align*}


\begin{proof}[Proof of \eqref{inequality:multilinear hartree H^1}]
For $j\in\{1,2,3\}$, we have
\begin{align*}
&\bigg\|\partial_j\bigg[A[V_\infty,(e^{it\Delta}f_1e^{it\Delta}f_2),(e^{it\Delta}f_3e^{it\Delta}f_4)]\cdot(e^{it\Delta}f_5)\bigg]\bigg\|_{L^1_tL^2_x}\\
&\le\|A[V_\infty,(\partial_j e^{it\Delta}f_1e^{it\Delta}f_2),(e^{it\Delta}f_3e^{it\Delta}f_4)]\cdot(e^{it\Delta}f_5)\|_{L^1_tL^2_x}\\
&\hspace{1cm}+\text{ four similar terms (by the product rule)}\\
&=:I_1+I_2+I_3+I_4+I_5.
\end{align*}
By Theorem \ref{beckner theorem}, Strichartz estimates, and Sobolev embedding,
\begin{align*}
I_1&\le\bigg\|\|A[V_\infty,(\partial_j e^{it\Delta}f_1e^{it\Delta}f_2),(e^{it\Delta}f_3e^{it\Delta}f_4)]\|_{L^\frac{12}{5}_x}\|(e^{it\Delta}f_5)\|_{L^{12}_x}\bigg\|_{L^1_t}\\
&\lesssim\|V_\infty\|_{L^\frac{1}{1-\epsilon}}\bigg\|\|\partial_j e^{it\Delta}f_1e^{it\Delta}f_2\|_{L^\frac{4}{1+8\epsilon}_x}\|e^{it\Delta}f_3e^{it\Delta}f_4\|_{L^6_x}\|(e^{it\Delta}f_5)\|_{L^{12}_x}\bigg\|_{L^1_t}\\
&\le T^{3\epsilon}\|V_\infty\|_{L^\frac{1}{1-\epsilon}}\|\partial_j e^{it\Delta}f_1\|_{L^\frac{2}{1-6\epsilon}_tL^{\frac{6}{1+12\epsilon}}_x}\prod_{\ell=2}^5\|e^{it\Delta}f_\ell\|_{L^8_tL^{12}_x}\\
&\lesssim T^{3\epsilon}\|V_\infty\|_{L^\frac{1}{1-\epsilon}}\|\partial_j e^{it\Delta}f_1\|_{L^\frac{2}{1-6\epsilon}_tL^{\frac{6}{1+12\epsilon}}_x}\prod_{\ell=2}^5\|e^{it\Delta}f_\ell\|_{L^8_t\dot W^{1,\frac{12}{5}}_x}\\
&\lesssim T^{3\epsilon}\|V_\infty\|_{L^{\frac{1}{1-\epsilon}}}\prod_{\ell=1}^5\|f_\ell\|_{\dot H^1}.
\end{align*}
and similarly for $k\in\{2,3,4\}$.  For $k=5$, we have
\begin{align*}
I_5&\le\bigg\|\|A[V_\infty,(e^{it\Delta}f_1e^{it\Delta}f_2),(e^{it\Delta}f_3e^{it\Delta}f_4)]\|_{L^3_x}\|\partial_je^{it\Delta}f_5\|_{L^{6}_x}\bigg\|_{L^1_t}\\
&\lesssim\|V_\infty\|_{L^\frac{1}{1-\epsilon}}\bigg\|\|e^{it\Delta}f_1e^{it\Delta}f_2\|_{L^\frac{6}{1+12\epsilon}_x}\|e^{it\Delta}f_3e^{it\Delta}f_4\|_{L^6_x}\|\partial_je^{it\Delta}f_5\|_{L^{6}_x}\bigg\|_{L^1_t}\\
&\le T^{3\epsilon}\|V_\infty\|_{L^\frac{1}{1-\epsilon}}\|e^{it\Delta}f_1\|_{L^\frac{8}{1-24\epsilon}_tL^{\frac{12}{1+24\epsilon}}_x}\prod_{\ell=2}^4\|e^{it\Delta}f_\ell\|_{L^8_tL^{12}_x}\|\partial_j e^{it\Delta}f_5\|_{L^2_tL^{6}_x}\\
&\lesssim T^{3\epsilon}\|V_\infty\|_{L^\frac{1}{1-\epsilon}}\|e^{it\Delta}f_1\|_{L^\frac{8}{1-24\epsilon}_t\dot W^{\frac{12}{5+24\epsilon}}_x}\prod_{\ell=2}^4\|e^{it\Delta}f_\ell\|_{L^8_t\dot W^{1,\frac{12}{5}}_x}\|\partial_j e^{it\Delta}f_5\|_{L^2_tL^6_x}\\
&\lesssim T^{3\epsilon}\|V_\infty\|_{L^{\frac{1}{1-\epsilon}}}\prod_{\ell=1}^5\|f_\ell\|_{\dot H^1}.\qedhere
\end{align*}
\end{proof}

Before we proceeds to the proof of \eqref{inequality:multilinear hartree H^{-1}}, we define $\{P_1,P_2,P_3\}$ to be a conic decomposition of $\mathbb{R}^3$.  That is, $P_j$ is a Fourier multiplier with symbol $p_j:\mathbb{R}^3\rightarrow [0,1]$ such that for $\xi=(\xi_1,\xi_2,\xi_3)\in\mathbb{R}^3$,
\begin{align*}
&p_j(\xi)=1\text{ for }\xi_j^2\ge 2\sum_{j'\neq j}\xi_{j'}^2,\\
&p_j(\xi)=0\text{ for }\xi_j^2\le\frac{1}{2}\sum_{j'\neq j}\xi_{j'}^2,\text{ and}\\
&\sum_j p_j(\xi)=1\text{ for all }\xi\in\mathbb{R}^3.
\end{align*}
Observe that $|\xi_j|\sim |\xi|$ on the support of $p_j$.\\

\begin{proof} [Proof of \eqref{inequality:multilinear hartree H^{-1}} when $m=5$]
For $h\in \dot{H}^1(\mathbb{R}^3)$, we have
\begin{align*}
&\int A[V_\infty,(e^{it\Delta}f_1e^{it\Delta}f_2),(e^{it\Delta}f_3e^{it\Delta}f_4)](x)(e^{it\Delta}f_5)(x)\overline{h}(x)\,dx\\
&=\sum_{j=1}^3\int\int\int V_\infty(y,z)\partial_j\bigg[(e^{it\Delta}f_1e^{it\Delta}f_2)(x-y)(e^{it\Delta}f_3e^{it\Delta}f_4)(x-z)\overline{h}(x)\bigg]\\
&\hspace{6.5cm}\times(\partial_j^{-1}P_je^{it\Delta}f_5)(x)\,dy\,dz\,dx\\
&=\sum_{j=1}^3\int\int\int V_\infty(y,z)(\partial_je^{it\Delta}f_1e^{it\Delta}f_2)(x-y)(e^{it\Delta}f_3e^{it\Delta}f_4)(x-z)\\
&\hspace{6.5cm}\times\overline{h}(x)(\partial_j^{-1}P_je^{it\Delta}f_5)(x)\,dy\,dz\,dx\\
&\hspace{1cm}+\text{four similar terms (by the product rule)}\\
&=:I_1+I_2+I_3+I_4+I_5.
\end{align*}
By duality, it now suffices to show that
\begin{align}
\|I_k\|_{L^1_t}\lesssim T^{3\epsilon}\|V_\infty\|_{L^{\frac{1}{1-\epsilon}}}\|f_5\|_{\dot H^{-1}}\bigg(\prod_{\ell=1}^4\|f_\ell\|_{\dot H^1}\bigg)\|h\|_{\dot H^1}\label{I_j 1}
\end{align}
holds for $k\in\{1,2,3,4,5\}$.  By Theorem \ref{beckner theorem}, Strichartz estimates, and Sobolev embedding, we have
\begin{align*}
\|I_1\|_{L^1_t}&\le\sum_{j=1}^3\bigg\|\|\int\int V_\infty(y,z)(\partial_je^{it\Delta}f_1e^{it\Delta}f_2)(x-y)(e^{it\Delta}f_3e^{it\Delta}f_4)(x-z)\,dy\,dz\|_{L^{\frac{3}{2}}_x}\\
&\hspace{2cm}\times\|\partial_j^{-1}P_je^{it\Delta}f_5\|_{L^6_x}\|h\|_{L^6_x}\bigg\|_{L^1_t}\\
&\lesssim\sum_{j=1}^3\bigg\|\|V_\infty\|_{L^{\frac{1}{1-\epsilon}}}\|\partial_je^{it\Delta}f_1e^{it\Delta}f_2\|_{L^\frac{3}{1+6\epsilon}}\|e^{it\Delta}f_3e^{it\Delta}f_4\|_{L^3}\|\partial_j^{-1}P_je^{it\Delta}f_5\|_{L^6_x}\|h\|_{L^6_x}\bigg\|_{L^1_t}\\
&\lesssim\sum_{j=1}^3T^{3\epsilon}\|V_\infty\|_{L^{\frac{1}{1-\epsilon}}}\|\partial_je^{it\Delta}f_1\|_{L^\frac{2}{1-6\epsilon}_tL^\frac{6}{1+12\epsilon}_x}\prod_{\ell=2}^4\|e^{it\Delta}f_\ell\|_{L^\infty_tL^6_x}\|\partial_j^{-1}P_je^{it\Delta}f_5\|_{L^2_tL^6_x}\|h\|_{L^6_x}\\
&\lesssim T^{3\epsilon}\|V_\infty\|_{L^{\frac{1}{1-\epsilon}}}\|f_5\|_{\dot H^{-1}}\bigg(\prod_{\ell=1}^4\|f_\ell\|_{\dot H^1}\bigg)\|h\|_{\dot H^1},
\end{align*}
and similarly \eqref{I_j 1} holds for $k\in\{2,3,4\}$.  For $k=5$, we bound $\|I_5\|_{L^1_t}$ by
\begin{align*}
&\sum_{j=1}^3\bigg\|\|\int\int V_\infty(y,z)(e^{it\Delta}f_1e^{it\Delta}f_2)(x-y)(e^{it\Delta}f_3e^{it\Delta}f_4)(x-z)\,dy\,dz\|_{L^{3}_x}\\
&\hspace{2cm}\times\|\partial_j^{-1}P_je^{it\Delta}f_5\|_{L^6_x}\|\partial_jh\|_{L^2_x}\bigg\|_{L^1_t}\\
&\lesssim\sum_{j=1}^3\bigg\|\|V_\infty\|_{L^{\frac{1}{1-\epsilon}}}\|e^{it\Delta}f_1e^{it\Delta}f_2\|_{L^\frac{6}{1+12\epsilon}}\|e^{it\Delta}f_3e^{it\Delta}f_4\|_{L^6}\|\partial_j^{-1}P_je^{it\Delta}f_5\|_{L^6_x}\|\partial_jh\|_{L^2_x}\bigg\|_{L^1_t}\\
&\lesssim\sum_{j=1}^3\bigg\|\|V_\infty\|_{L^{\frac{1}{1-\epsilon}}}\prod_{\ell=1}^2\|e^{it\Delta}f_\ell\|_{L^\frac{12}{1+12\epsilon}_x}\prod_{m=3}^4 \|e^{it\Delta}f_m\|_{L^{12}_x} \|\partial_j^{-1}P_je^{it\Delta}f_5\|_{L^6_x}\|\partial_jh\|_{L^2_x}\bigg\|_{L^1_t}\\
&\lesssim\sum_{j=1}^3T^{3\epsilon}\|V_\infty\|_{L^{\frac{1}{1-\epsilon}}}\prod_{\ell=1}^2\|e^{it\Delta}f_\ell\|_{L^\frac{8}{1-12\epsilon}_t\dot W^{1,\frac{12}{5+12\epsilon}}_x}\prod_{m=3}^4\|e^{it\Delta}f_m\|_{L^8_t\dot W^{1,\frac{12}{5}}_x}\|\partial_j^{-1}P_je^{it\Delta}f_5\|_{L^2_tL^6_x}\|\partial_jh\|_{L^2_x}\\
&\lesssim T^{3\epsilon}\|V_\infty\|_{L^{\frac{1}{1-\epsilon}}}\|f_5\|_{\dot H^{-1}}\bigg(\prod_{\ell=1}^4\|f_\ell\|_{\dot H^1}\bigg)\|h\|_{\dot H^1}.\qedhere
\end{align*}
\end{proof}
\begin{proof}[Proof of \eqref{inequality:multilinear hartree H^{-1}} when $m\neq 5$]
We present the proof for $m=1$, and note that the proof for $m\in\{2,3,4\}$ is similar.  i.e. we show that
\begin{align*}
&\|A[V_\infty,(e^{it\Delta}f_1e^{it\Delta}f_2),(e^{it\Delta}f_3e^{it\Delta}f_4)]\cdot(e^{it\Delta}f_5)\|_{L^1_t\dot{H}^{-1}_x}\\
&\lesssim T^{3\epsilon}\|V_\infty\|_{L^\frac{1}{1-\epsilon}}\|f_1\|_{\dot{H}^{-1}}\prod_{\ell=2}^5\|f_\ell\|_{\dot{H}^1}.
\end{align*}
For $h\in \dot{H}^1(\mathbb{R}^3)$, we have
\begin{align*}
&\int A[V_\infty,(e^{it\Delta}f_1e^{it\Delta}f_2),(e^{it\Delta}f_3e^{it\Delta}f_4)](x)(e^{it\Delta}f_5)(x)\overline{h}(x)\,dx\\
&=\sum_{j=1}^3\int\int\int V_\infty(y,z)(\partial_j^{-1}P_je^{it\Delta}f_1)(x-y)\\
&\hspace{2cm}\times\partial_j\bigg[(e^{it\Delta}f_2)(x-y)(e^{it\Delta}f_3e^{it\Delta}f_4)(x-z)(e^{it\Delta}f_5)(x)\overline{h}(x)\bigg]\,dy\,dz\,dx\\
&=\sum_{j=1}^3\int\int\int V_\infty(y,z)(\partial_j^{-1}P_je^{it\Delta}f_1\cdot \partial_je^{it\Delta}f_2)(x-y)(e^{it\Delta}f_3e^{it\Delta}f_4)(x-z)\\
&\hspace{2cm}\times(e^{it\Delta}f_5)(x)\overline{h}(x)\,dy\,dz\,dx\\
&\hspace{1cm}+\text{four similar terms (by the product rule)}\\
&=:I_1+I_2+I_3+I_4+I_5.
\end{align*}
By duality, it now suffices to show that
\begin{align}
\|I_k\|_{L^1_t}\lesssim T^{3\epsilon}\|V_\infty\|_{L^{\frac{1}{1-\epsilon}}}\|f_1\|_{\dot H^{-1}}\bigg(\prod_{\ell=2}^5\|f_\ell\|_{\dot H^1}\bigg)\|h\|_{\dot H^1}\label{I_j}
\end{align}
holds for $k\in\{1,2,3,4,5\}$.  By Theorem \ref{beckner theorem}, Strichartz estimates, and Sobolev embedding, we have
\begin{align*}
\|I_1\|_{L^1_t}&\le\sum_{j=1}^3\bigg\|\|\int\int V_\infty(y,z)(\partial_j^{-1}P_je^{it\Delta}f_1\cdot\partial_je^{it\Delta}f_2)(x-y)(e^{it\Delta}f_3e^{it\Delta}f_4)(x-z)\,dy\,dz\|_{L^{\frac{3}{2}}_x}\\
&\hspace{2cm}\times\|e^{it\Delta}f_5\|_{L^6_x}\|h\|_{L^6_x}\bigg\|_{L^1_t}\\
&\lesssim\sum_{j=1}^3\bigg\|\|V_\infty\|_{L^{\frac{1}{1-\epsilon}}}\|\partial_j^{-1}P_je^{it\Delta}f_1\cdot\partial_je^{it\Delta}f_2\|_{L^\frac{3}{1+6\epsilon}}\|e^{it\Delta}f_3e^{it\Delta}f_4\|_{L^3}\|e^{it\Delta}f_5\|_{L^6_x}\|h\|_{L^6_x}\bigg\|_{L^1_t}\\
&\lesssim \sum_{j=1}^3 T^{3\epsilon}\|V_\infty\|_{L^{\frac{1}{1-\epsilon}}}\|\partial_j^{-1}P_je^{it\Delta}f_1\|_{L^{\frac{2}{1-3\epsilon}}_tL^\frac{6}{1+6\epsilon}_x}\|\partial_je^{it\Delta}f_2\|_{L^{\frac{2}{1-3\epsilon}}_tL^\frac{6}{1+6\epsilon}_x} \prod_{\ell=3}^5\|e^{it\Delta}f_\ell\|_{L^\infty_tL^6}\|h\|_{L^6}\\
&\lesssim T^{3\epsilon}\|V_\infty\|_{L^{\frac{1}{1-\epsilon}}}\|f_1\|_{\dot H^{-1}}\bigg(\prod_{\ell=2}^5\|f_\ell\|_{\dot H^1}\bigg)\|h\|_{\dot H^1},
\end{align*}
and similarly, $\eqref{I_j}$ holds for $k\in\{2,3,4\}$.  Finally, we bound $\|I_5\|_{L^1_t}$ by
\begin{align*}
&\sum_{j=1}^3\|\int\int V_\infty(y,z)(\partial_j^{-1}P_je^{it\Delta}f_1\cdot e^{it\Delta}f_2)(x-y)(e^{it\Delta}f_3e^{it\Delta}f_4)(x-z)\,dy\,dz\|_{L^1_tL^3_x}\\
&\hspace{2cm}\times\|e^{it\Delta}f_5\|_{L^\infty_tL^6_x}\|\partial_jh\|_{L^2_x}\\
&\le\sum_{j=1}^3\|V_\infty\|_{L^{\frac{1}{1-\epsilon}}}\|\partial_j^{-1}P_je^{it\Delta}f_1\cdot e^{it\Delta}f_2\|_{L^{\frac{3}{2}}_tL^{\frac{9}{2+18\epsilon}}_x}\|e^{it\Delta}f_3e^{it\Delta}f_4\|_{L^3_tL^9_x}\|e^{it\Delta}f_5\|_{L^\infty_tL^6_x}\|\partial_jh\|_{L^2_x}\\
&\le\sum_{j=1}^3\|V_\infty\|_{L^{\frac{1}{1-\epsilon}}}\|\partial_j^{-1}P_je^{it\Delta}f_1\|_{L^2_tL^\frac{6}{1+12\epsilon}_x}\prod_{\ell=2}^4\|e^{it\Delta}f_\ell\|_{L^6_tL^{18}_x} \|e^{it\Delta}f_5\|_{L^\infty_tL^6_x}\|\partial_jh\|_{L^2_x}\\
&\lesssim\sum_{j=1}^3T^{3\epsilon}\|V_\infty\|_{L^{\frac{1}{1-\epsilon}}}\|\partial_j^{-1}P_je^{it\Delta}f_1\|_{L^\frac{2}{1-6\epsilon}_tL^\frac{6}{1+12\epsilon}_x}\prod_{\ell=2}^4\|e^{it\Delta}f_2\|_{L^6_t\dot W^{1,\frac{18}{7}}_x}\|e^{it\Delta}f_5\|_{L^\infty_t\dot{H}^1_x}\|\partial_jh\|_{L^2_x}\\
&\lesssim T^{3\epsilon}\|V_\infty\|_{L^{\frac{1}{1-\epsilon}}}\|f_1\|_{\dot H^{-1}}\bigg(\prod_{\ell=2}^5\|f_\ell\|_{\dot H^1}\bigg)\|h\|_{\dot H^1}.\qedhere 
\end{align*}
\end{proof}

\bigskip
\footnotesize
\noindent\textit{Acknowledgments.}
The authors would like to express their special appreciation and thanks to their mentors Thomas Chen and Nata\v{s}a Pavlovi\'c for proposing the problem and for various useful discussions. Y.H. would like to thank IH\'ES for their hospitality and support while he visited in the summer of 2014. K.T. was supported by NSF grant and DMS-1151414 (CAREER, PI T. Chen)

\bibliographystyle{abbrv}

\end{document}